\DeclareMathOperator{\Stab}{Stab}
\newif\ifHideFoot
\newcommand{\Zhiyuan}[1]{}
\newcommand{\Ruxuan}[1]{}
\newcommand{\marg}[1]{\normalsize{{
			\color{red}\footnote{{\color{blue}#1}}}{\marginpar[\vskip
			-.25cm{\color{red}\hfill$\Rightarrow$\tiny\thefootnote}]{\vskip
				-.2cm{\color{red}$\Leftarrow$\tiny\thefootnote}}}}}
\newcommand{\Zhiyuan}[1]{\marg{(Zhiyuan) #1}}
\newcommand{\Ruxuan}[1]{\marg{(Ruxuan) #1}}
\newtheorem{theorem}{Theorem}[section]
\newtheorem{proposition}[theorem]{Proposition}
\newtheorem{conjecture}[theorem]{Conjecture}
\newtheorem{corollary}[theorem]{Corollary}
\newtheorem{lemma}[theorem]{Lemma}
\newtheorem{claim}{Claim}
\theoremstyle{definition}
\newtheorem{definition}[theorem]{Definition}
\newtheorem{remark}[theorem]{Remark}
\newtheorem{example}[subsection]{Example}
\newcommand\cC{\mathcal{C}}
\newcommand\cE{\mathcal{E}}
\newcommand\cF{\mathcal{F}}
\newcommand\cG{\mathcal{G}}
\newcommand\cH{\mathcal{H}}
\newcommand\cL{\mathcal{L}}
\newcommand\cO{\mathcal{O}}
\newcommand\cQ{\mathcal{Q}}
\newcommand\cT{\mathcal{T}}
\newcommand\cZ{\mathcal{Z}}
\newcommand\CC{\mathbb{C}}
\newcommand\LL{\mathbb{L}}
\newcommand\NN{\mathbb{N}}
\newcommand\PP{\mathbb{P}}
\newcommand\QQ{\mathbb{Q}}
\newcommand\RR{\mathbb{R}}
\newcommand\ZZ{\mathbb{Z}}
\newcommand\scE{\mathscr{E}}
\newcommand\scG{\mathscr{G}}
\newcommand\cf{\textit{cf}}
\newcommand\bB{\mathbf{B}}
\newcommand\bP{\mathbf{P}}
\newcommand\bR{\mathbf{R}}
\newcommand\bS{\mathbf{S}}
\newcommand\rD{\mathrm{D}}
\newcommand\rH{\mathrm{H}}
\newcommand\rR{\mathrm{R}}
\newcommand\bfv{\mathbf{v}}
\newcommand\bfw{\mathbf{w}}
\newcommand\frh{\mathfrak{h}}
\newcommand\fro{\mathfrak{o}}
\newcommand\rd{\mathrm{d}}
\DeclareMathOperator{\CH}{CH}
\DeclareMathOperator{\codim}{codim}
\DeclareMathOperator{\rank}{rank}
\DeclareMathOperator{\Br}{Br}
\DeclareMathOperator{\fl}{fl}
\DeclareMathOperator{\id}{id}
\DeclareMathOperator{\Pic}{Pic}
\DeclareMathOperator{\NS}{NS}
\DeclareMathOperator{\Hom}{Hom}
\DeclareMathOperator{\Ext}{Ext}
\DeclareMathOperator{\ch}{ch}
\DeclareMathOperator{\Spec}{Spec}
\DeclareMathOperator{\sm}{sm}
\DeclareMathOperator{\td}{td}
\DeclareMathOperator{\Gr}{Gr}
\DeclareMathOperator{\SYZ}{SYZ}
\newcommand\srH{\mathscr{H}}
\newcommand*{\sheafhom}{\mathscr{H}\kern -.5pt om}
\newcommand\srA{\mathscr{A}}
\newcommand\srC{\mathscr{C}}
\newcommand\srX{\mathscr{X}}
\newcommand\srY{\mathscr{Y}}
\newcommand\srM{\mathscr{M}}
\newcommand\srN{\mathscr{N}}
\newcommand\srE{\mathscr{E}}
\newcommand\srF{\mathscr{F}}
\newcommand\srD{\mathscr{D}}
\subjclass[2020]{14C25, 14F08, 14J28, 14J42}
\keywords{twisted K3 surfaces, derived category, Hyper-K\"ahler varieties, Beauville-Voisin filtration, Bloch's conjecture}
\begin{document}

\title{Filtrations on the derived category of twisted K3 surfaces}
\date{\today}

\author{Zaiyuan Chen}
\address{Zaiyuan Chen, Shanghai Center for Mathematical Sciences, Fudan University, Jiangwan Campus, Shanghai, 200438, China}
\email{zaiyuanchen16@fudan.edu.cn}

\author{Zhiyuan Li}
\address{Zhiyuan Li, Shanghai Center for Mathematical Sciences, Fudan University, Jiangwan Campus, Shanghai, 200438, China}
\email{zhiyuan\_li@fudan.edu.cn}

\author{Ruxuan Zhang}
\address{Ruxuan Zhang,  Beijing International Center for Mathematical Research, Peking University, No. 5 Yiheyuan Road Haidian District, Beijing, P.R.China 100871}
\email{rxzhang@pku.edu.cn}

\author{Xun Zhang}
\address{Xun Zhang, Morningside Center of Mathematics, Chinese Academy of Sciences, No. 55, Zhongguancun East Road, Haidian District, Beijing 100190}
\email{zhangxun2022@amss.ac.cn}

\maketitle

\begin{abstract}
We introduce and study the Shen-Yin-Zhao filtration on derived categories of twisted K3 surfaces. A main contribution is the construction of a twisted Beauville-Voisin class $\mathfrak{o}_{\mathscr{X}} \in \operatorname{CH}_0(X)$ that extends fundamental results of O'Grady and Shen-Yin-Zhao \cite{OG13, SYZ20} to twisted settings. This class enables:
\begin{itemize}
    \item A derived equivalence-invariant filtration $\mathbf{S}_\bullet(\mathrm{D}^{(1)}(\mathscr{X}))$ preserved under Fourier-Mukai transforms,
    \item A birational invariant filtration $\mathbf{S}^{\mathrm{SYZ}}_\bullet \operatorname{CH}_0$ on Bridgeland moduli spaces.
\end{itemize}
We prove $\mathbf{S}^{\mathrm{SYZ}}_\bullet \operatorname{CH}_0$ coincides with Voisin's filtration $\mathbf{S}^{\mathrm{BV}}_\bullet\operatorname{CH}_0$ (Theorem \ref{mainthm3}), providing a canonical candidate for the conjectural Beauville-Voisin filtration. Applications include Bloch's conjecture for (anti)-symplectic automorphisms  and  existence of algebraically coisotropic subvarieties.

\end{abstract}

\section{Introduction}

\subsection{Filtrations on K3 category}
Let $X$ be a K3 surface and let $\fro_X\in \CH_0(X)$ be the Beauville-Voisin class defined in \cite{BV04}. O'Grady \cite{OG13} introduced a filtration $\mathbf{S}_\bullet(X)$ on $\CH_0(X)$ given by 
\[
\mathbf{S}_i(X) = \bigcup_{\substack{\text{effective} \\ \deg([z])=i}} \left\{ [z] + \mathbb{Z} \cdot \fro_X \right\}
\]
for all effective zero-cycles $z$ of degree $i$. Denote by $\rD^b(X)$ the bounded derived category of coherent sheaves on $X$. Shen-Yin-Zhao \cite{SYZ20}  used O'Grady's filtration to construct a natural increasing filtration on $\rD^b(X)$:
\[
\mathbf{S}_0(\rD^b(X)) \subset \mathbf{S}_1(\rD^b(X)) \subset \cdots \subset \mathbf{S}_i(\rD^b(X)) \subset \cdots
\]
where $\mathbf{S}_i(\rD^b(X))$ consists of objects $\cE \in \rD^b(X)$ satisfying $c_2(\cE) \in \mathbf{S}_i(X)$. They established that for any $\cE \in \rD^b(X)$,
\[
c_2(\cE) \in \mathbf{S}_{d(\cE)}(X), \quad \text{with} \quad d(\cE) = \tfrac{1}{2} \dim \Ext^1(\cE,\cE).
\]
More generally, consider a twisted K3 surface $\srX \to X$ in the sense of \cite{Lie07}, and denote by $\rD^{(1)}(\srX)$ the bounded derived category of 1-fold twisted coherent sheaves (see \S \ref{ssec:twisted}). The existence of an analogous filtration on $\rD^{(1)}(\srX)$ is expected. The main result of this paper is:

\begin{theorem}\label{mainthm}
There exists a unique degree-1 cycle class $\fro_{\srX} \in \CH_0(X)$ such that for any $\cE \in \rD^{(1)}(\srX)$,
\[
c_2(\cE) - \rank(\cE) \fro_{\srX} \in \mathbf{S}_{d(\cE)}(X),
\]
where $d(\cE) = \tfrac{1}{2} \dim \Ext^1(\cE,\cE)$.
\end{theorem}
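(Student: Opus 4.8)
The plan is to transplant the strategy of \cite{SYZ20} to the twisted setting, the two genuinely new ingredients being the construction of the twisted Beauville--Voisin class $\fro_{\srX}$ and the use of moduli spaces of twisted sheaves (and twisted Bridgeland stability conditions) in place of their untwisted counterparts. I first dispose of uniqueness. Since $\CH_0(X)$ is torsion free (Roitman) and $\ZZ\cdot[\fro_X]$ is saturated in it, it suffices to test the asserted property against enough objects: if $\fro,\fro'$ are both admissible then $\rank(\cE)\cdot(\fro-\fro')\in\bS_{\rd(\cE)}(X)-\bS_{\rd(\cE)}(X)$ for every $\cE$, and taking $\cE$ to be a direct sum of $a$ pairwise non-isomorphic $\sigma$-stable twisted sheaves with a common primitive Mukai vector of smallest possible square (so that $\rank(\cE)=a\cdot\rank(v)$ while $\rd(\cE)=a$, using that $\Ext^1$ between distinct $\sigma$-stable objects of equal phase vanishes), the O'Grady-type estimates on $\bS_\bullet(X)$ together with the unique divisibility of $\CH_0(X)_{\mathrm{hom}}$ force $\fro=\fro'$; when a positive-rank spherical twisted object $\cE_0$ happens to exist the argument collapses to the observation that $\rank(\cE_0)(\fro-\fro')\in\bS_0(X)=\ZZ[\fro_X]$ has degree $0$, hence vanishes.

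To construct $\fro_{\srX}$ and simultaneously settle the lowest-level cases, I fix a polarization $H$ on $X$ and a $\mu_H$-stable locally free $\alpha$-twisted sheaf $\cG$ whose twisted Mukai vector $v$ is primitive of smallest square. Then the moduli space $M:=M_H(v)$ is a point (when $v^2=-2$), a K3 surface (when $v^2=0$), or more generally a projective hyper-K\"ahler variety; it carries a quasi-universal twisted sheaf, and hence comes with a correspondence to $X$. Transporting the Beauville--Voisin class of $M$ along this correspondence and normalizing the degree to $1$ produces a class $\fro_{\srX}\in\CH_0(X)$. That it is independent of the choices, and that $c_2(\cE)-\rank(\cE)[\fro_{\srX}]\in\bS_{\rd(\cE)}(X)$ for $\sigma$-stable $\cE$ whose $\rd(\cE)$ attains this minimal value, follows from the twisted Beauville--Voisin relations established earlier in the paper together with O'Grady's analysis \cite{OG13} of zero-cycles on moduli of sheaves; for spherical $\cE$ one uses in addition that any two spherical objects are linked by a sequence of spherical twists, under which $c_2(-)-\rank(-)[\fro_{\srX}]$ modulo $\bS_0(X)$ is manifestly unchanged once it has been normalized to vanish on $\cG$.

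The general case is then obtained by d\'evissage and induction on $\rd(\cE)=1+\tfrac12 v(\cE)^2$ (for $\cE$ simple), resting on three facts: (i) $\bS_i(X)+\bS_j(X)\subseteq\bS_{i+j}(X)$ and each $\bS_i(X)$ is stable under $x\mapsto-x$; (ii) in an exact triangle $A\to E\to B$ one has $\rank(E)=\rank(A)+\rank(B)$ and $c_2(E)\equiv c_2(A)+c_2(B)\pmod{\ZZ[\fro_X]}$, since the correction $c_1(A)\cdot c_1(B)$ lands in $\QQ[\fro_X]\cap\CH_0(X)=\ZZ[\fro_X]$ by the twisted Beauville--Voisin relation; and (iii) if $A_1,\dots,A_k$ are the $\sigma$-stable Jordan--H\"older factors of $E$, their configuration of extensions is connected, whence $\sum_i\rd(A_i)\le\rd(E)$. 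Using a generic stability condition on $\rD^b(\srX)$ (which exists by the twisted version of Bridgeland's construction), Harder--Narasimhan and Jordan--H\"older filtrations reduce the claim to $\sigma$-stable $\cE$; and when $v(\cE)^2$ exceeds the minimal value, one crosses a wall on which $\cE$ becomes strictly semistable, exhibiting it as an iterated extension of $\sigma$-stable objects of strictly smaller Mukai square, so that (i)--(iii) and the inductive hypothesis close the loop --- the transformation of $\rank$ and $c_2$ under the wall-crossing Fourier--Mukai kernel being explicitly controlled, which is also what makes the resulting filtration Fourier--Mukai invariant.

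I expect the real difficulty to lie in the first two paragraphs rather than in the d\'evissage: one must make sense of the twisted second Chern class as an honest integral class in $\CH_0(X)$, prove the twisted Beauville--Voisin relations (the analogues of $c_1\cdot c_1'\in\ZZ[\fro_X]$ and of $c_2(T_X)=24[\fro_X]$) that make $\fro_{\srX}$ well defined and force the integrality of the cross-terms used in (ii), and carry O'Grady's zero-cycle estimates over moduli of sheaves into the twisted category; granting these, the inductive skeleton is formally the same as in \cite{SYZ20}.
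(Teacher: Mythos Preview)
Your d\'evissage skeleton (torsion/torsion-free, double dual, truncation, HN/JH) matches the paper's and \cite{SYZ20}'s, but the heart of your argument --- the treatment of a \emph{simple} $\sigma$-stable object $\cE$ of positive rank --- rests on a step that does not work as stated. You propose to cross a wall so that $\cE$ becomes strictly semistable with JH factors of strictly smaller Mukai square, and then induct. There is no mechanism guaranteeing this: for a given primitive $v$ there may well be a chamber whose objects never destabilize along any wall, and even when walls exist the JH factors on the wall need not have smaller square (their squares are governed by the hyperbolic sublattice attached to the wall, not by $v^2$ alone). So the induction has no base outside the minimal-square case and no guaranteed descent above it. This is precisely why \cite{SYZ20} does \emph{not} argue by wall-crossing at this step, and neither does the present paper.

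What the paper actually does for a simple locally free $\cE$ is a degenerate-locus argument generalizing \cite{Voi15}: one produces auxiliary polystable locally free $\cE'$ and $\cF$ (with $\rank(\cE)+\rank(\cE')=\rank(\cF)$) lying on constant cycle Lagrangian subvarieties of suitable moduli spaces, and then analyzes the cokernels $\cQ_t$ of generic maps $\cE\oplus\cE'\to\cF$. These $\cQ_t$ are rank-one twisted sheaves on smooth curves, and the image of the resulting rational map to a torsion moduli space is a constant cycle subvariety of dimension $\rd(\cQ_t)-\rd(\cE)$; the torsion case (handled first, via an explicit finite correspondence between the smooth-support loci of twisted and untwisted Beauville--Mukai systems) then gives $c_2(\cQ_t)=c_2(\cE)+c_2(\cE')-c_2(\cF)\in\bS_{\rd(\cE)}(X)$. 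The construction and well-definedness of $\fro_{\srX}$ come out of the same machinery (Definition~\ref{twisted:o} and Proposition~\ref{prop:main}), using a point on a constant cycle Lagrangian in a locally-free moduli space rather than ``transporting the BV class of $M$'' via an unspecified correspondence. In short, the missing idea in your proposal is Proposition~\ref{prop:main} and the torsion-sheaf comparison of \S\ref{sec:torsion}; without these, the inductive scheme has no engine for simple stable objects.
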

We call $\fro_{\srX}$ the $\srX$-twisted Beauville-Voisin class on $X$. When the twist $\srX \to X$ is essentially trivial, our construction yields $\fro_{\srX} = \fro_X$, recovering the main result of \cite{SYZ20}. We conjecture that $\fro_{\srX} = \fro_X$ holds for any twist $\srX \to X$. There are supporting evidence coming from the Bloch-Beilinson conjecture: when $X$ is defined over $\bar{\mathbb{Q}}$, Remark \ref{rmk:descent} shows that $\fro_{\srX}$ is also defined over $\bar{\mathbb{Q}}$, implying $\fro_{\srX} = \fro_X$ by the Bloch-Beilinson conjecture.

There are several useful consequences of Theorem \ref{mainthm}. First, we generalize \cite[Corollary 2.6]{Huy10}:
\begin{corollary}\label{cor:rigid}
For any two rigid objects $\cE, \cF \in \rD^{(1)}(\srX)$ with identical Mukai vectors,
\[
c_2(\cE) = c_2(\cF) \in \CH_0(X).
\]
\end{corollary}
Motivated by \cite{SYZ20}, we can define a derived filtration on twisted K3 categories:
\begin{equation}\label{eq:SYZ-filtration}
    \mathbf{S}_i(\rD^{(1)}(\srX)) = \left\{ \cE \in \rD^{(1)}(\srX) \mid c_2(\cE) - \rank(\cE)\fro_{\srX} \in \mathbf{S}_i(X) \right\}.
\end{equation}
This filtration exhibits natural invariance under derived equivalences:

\begin{theorem}\label{mainthm2}
Let $\Psi: \rD^{(1)}(\srX) \xrightarrow{\sim} \rD^{(1)}(\srY)$ be a derived equivalence between twisted K3 surfaces $\srX \to X$ and $\srY \to Y$. Then for any object $\cE \in \rD^{(1)}(\srX)$ and index $i$,
\[
\cE \in \mathbf{S}_i(\rD^{(1)}(\srX)) \iff \Psi(\cE) \in \mathbf{S}_i(\rD^{(1)}(\srY)).
\]
In particular, the filtration \eqref{eq:SYZ-filtration} is preserved under derived equivalences.
\end{theorem}
This extends \cite[Corollary 2]{SYZ20} to the twisted setting.

\subsection{BV  filtration on Bridgeland moduli spaces} 
The proof of Theorems \ref{mainthm} and \ref{mainthm2} involves a detailed study of the Beauville-Voisin filtration on the moduli space of stable objects in $\rD^{(1)}(\srX)$. Let $\mathbf{v} \in \widetilde{\rH}(\srX)$ be a primitive Mukai vector and $\sigma$ a $\mathbf{v}$-generic stability condition on $\rD^{(1)}(\srX)$. The coarse moduli space $M = M_\sigma(\srX,\mathbf{v})$ of $\sigma$-stable objects in $\rD^{(1)}(\srX)$ with Mukai vector $\mathbf{v}$ is a smooth projective hyper-K\"ahler variety. 

Voisin introduced a filtration $\mathbf{S}^{\mathrm{BV}}_{\bullet} \CH_0(M) \subseteq \CH_0(M)$ defined by
\[
\mathbf{S}_{i}^{\mathrm{BV}}\CH_0(M) := \left\langle x \in M \mid \dim \mathcal{O}_x \geq d - i \right\rangle,
\]
where $\mathcal{O}_x = \{ x' \in M \mid [x] = [x'] \in \CH_0(M) \}$. This filtration depends only on the isomorphism class of $M$.

On the other hand, there is a natural filtration 
\[
\mathbf{S}^{\mathrm{SYZ}}_{\bullet} \CH_0(M) \subseteq \CH_0(M)
\]
given by restricting the filtration $\mathbf{S}_{\bullet}(\rD^{(1)}(\srX))$ to $\CH_0(M)$, i.e.,
\begin{equation}\label{eq:SYZ}
    \mathbf{S}^{\mathrm{SYZ}}_{i}\CH_0(M) = \left\langle \cE \in M \mid \cE \in \mathbf{S}_i(\rD^{(1)}(\srX)) \right\rangle.
\end{equation}
A natural expectation, analogous to the untwisted case (\cite[Theorem 1.1]{LiZ22}), is that
\[
\mathbf{S}^{\mathrm{SYZ}}_{\bullet}\CH_0(M) = \mathbf{S}^{\mathrm{BV}}_{\bullet}\CH_0(M).
\]
Another goal of this paper is to verify this equality for arbitrary $M$. Our main technical result is:

\begin{theorem}\label{mainthm3}
Assume $\mathbf{v}^2 = 2n - 2 \geq 0$. Then for any object $\cE \in M$,
\[
\dim \mathcal{O}_{\cE} \geq n - i \quad \iff \quad \cE \in \mathbf{S}_i(\rD^{(1)}(\srX)).
\]
In particular, $\mathbf{S}_i^{\mathrm{BV}}\CH_0(M) = \mathbf{S}_i^{\mathrm{SYZ}}\CH_0(M)$.
\end{theorem}

This is proved by studying degeneracy loci of morphisms between twisted sheaves, generalizing the work in \cite{Voi15} and \cite{LiZ22}. Our method also applies to hyper-K\"ahler varieties of generalized Kummer type (see \S\ref{rmk:kummer}). A consequence is:

\begin{corollary}\label{cor：bir-invariant}
Let $Y$ be a hyper-K\"ahler variety of $K3^{[n]}$-type. If its algebraic Markman-Mukai lattice contains an isotropic vector, then $\bS_\bullet^{\mathrm{BV}}\CH_0(Y)$ is a birational invariant, i.e. for any birational map $f \colon Y \dashrightarrow Y'$ with $Y'$ hyper-K\"ahler,
\[
f_\ast (\bS_\bullet^{\mathrm{BV}} \CH_0(Y)) = \bS_\bullet^{\mathrm{BV}}\CH_0(Y').
\]
In particular, if $\operatorname{rank}(\NS(Y)) \notin \{2, 3\}$, then $\bS_\bullet^{\mathrm{BV}}\CH_0(Y)$ is a birational invariant. 
\end{corollary}

This filtration plays an important role in studying Bloch's conjecture for zero-cycles on hyper-K\"ahler varieties. An advantage of the Shen-Yin-Zhao filtration $\bS_\bullet^{\mathrm{SYZ}}\CH_0 = \bS_\bullet^{\mathrm{BV}}\CH_0$ is that its graded pieces are controlled by the first one. This yields

\begin{theorem}\label{thm:bloch}
Let $Y$ be a smooth projective hyper-K\"ahler variety of $K3^{[n]}$-type with a birational Lagrangian fibration. Then $\CH_0(Y)$ can be equipped with a natural filtration in \eqref{eq:SYZ} and  Bloch's conjecture holds  for  any (anti-)symplectic birational automorphism  preserving the birational Lagrangian fibration, i.e.
\begin{equation}
    \phi^\ast|_{\rH^{2,0}(Y)}=\pm \id \Leftrightarrow  \phi_\ast|_{\mathrm{Gr}_s\CH_0(Y)}=(\pm 1)^s\id,~\forall ~1\leq s\leq n.
\end{equation}
where $\mathrm{Gr}_s\CH_0(Y)$ is the $s$-th graded piece of the the filtration $\bS_\bullet^{\rm BV}\CH_0(Y)$.
\end{theorem}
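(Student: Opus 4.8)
The plan is to realize $Y$ as (birational to) a Bridgeland moduli space so that the filtration \eqref{eq:SYZ} even makes sense on $\CH_0(Y)$, then to reinterpret $\phi$ as a derived auto-equivalence, and finally to reduce Bloch's conjecture for $\phi$ to the case of the underlying twisted K3 surface via the structure of the graded pieces. \textbf{Stage 1.} By Matsushita the base of the (birational) Lagrangian fibration on $Y$ is $\PP^n$, and by Markman's classification of Lagrangian fibrations on $K3^{[n]}$-type varieties — combined with the minimal model program for moduli spaces of Bayer--Macr\`i and its twisted refinement due to Yoshioka — $Y$ is birational to a moduli space $M=M_\sigma(\srX,\bfv)$ of $\sigma$-stable objects on a twisted K3 surface $\srX\to X$, with $\bfv$ primitive, $\bfv^2=2n-2\geq 0$, and with the birational Lagrangian fibration matching the support morphism of the Beauville--Mukai system on $M$ (so $\bfv$ has isotropic ``support'' type). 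Since birational hyper-K\"ahler varieties have canonically isomorphic groups of zero-cycles and any two moduli spaces in a common birational class are related by a Fourier--Mukai equivalence (Bayer--Macr\`i), Theorem \ref{mainthm2} shows $\bS_\bullet^{\mathrm{SYZ}}$ is a birational invariant; hence $\bS_\bullet^{\mathrm{SYZ}}\CH_0(M)$ descends to a well-defined increasing filtration on $\CH_0(Y)$, independent of the choices. This is the asserted filtration; from now on I replace $Y$ by $M$ and $\phi$ by the induced birational automorphism of $M$, which by hypothesis commutes with (a birational model of) the Beauville--Mukai support map.

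\textbf{Stage 2.} Because $\phi$ fixes the isotropic class supporting the fibration and is an automorphism of $M$ up to birational equivalence, the Bayer--Macr\`i description of the movable cone and wall-and-chamber structure of $M_\sigma(\srX,\bfv)$ shows that $\phi$ is induced, at the level of $\CH_0$ via the universal family and its Fourier--Mukai kernel, by an exact auto-equivalence $\Phi$ of $\rD^b(\srX)$ with $\Phi^{H}(\bfv)=\pm\bfv$, unique up to shifts and line-bundle twists. Under Markman's Hodge isometry $H^2(M,\ZZ)\cong \bfv^{\perp}\subset\widetilde{H}(\srX,\ZZ)$, the hypothesis $\phi^{\ast}|_{H^{2,0}(Y)}=\pm\id$ is equivalent to $\Phi$ acting as $\pm\id$ on the transcendental lattice (equivalently on $H^{2,0}$) of $\widetilde{H}(\srX)$; call such $\Phi$ \emph{(anti-)symplectic}. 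The theorem thus amounts to: $\Phi$ is (anti-)symplectic if and only if $\phi_{\ast}$ acts by $(\pm 1)^s\id$ on each $\mathrm{Gr}_s\CH_0(M)$.

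\textbf{Stage 3.} Here one uses that the graded pieces of $\bS_\bullet^{\mathrm{SYZ}}\CH_0(M)$ are governed by the first one, which is the transcendental Chow group $T(X):=\CH_0(X)_{\mathrm{hom}}$. Concretely, combining Theorem \ref{mainthm3} (identifying $\bS^{\mathrm{SYZ}}$ with $\bS^{\mathrm{BV}}$ on the locally free locus), the Fourier--Mukai comparison reducing $M$ birationally to a moduli space of the form $M_{\sigma'}(\srX',(1,0,1-n))$, and the additivity $\bS_a(X)+\bS_b(X)\subseteq\bS_{a+b}(X)$ of O'Grady's filtration under cycle-addition, one obtains that $\mathrm{Gr}_\bullet\CH_0(M)$ is the degree-$\bullet$ part of the symmetric algebra on $\mathrm{Gr}_1\CH_0(M)$, functorially under Fourier--Mukai equivalences, together with an identification $\mathrm{Gr}_1\CH_0(M)\cong T(X)$ (this is where the twisted Beauville--Voisin class $\fro_{\srX}$ enters). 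Hence $\phi_{\ast}|_{\mathrm{Gr}_s}=\Sym^s\bigl(\Phi_{\ast}|_{T(X)}\bigr)$. It remains to feed in Bloch's conjecture on the twisted K3 side: an (anti-)symplectic exact auto-equivalence $\Phi$ acts by $\pm\id$ on $T(X)$. Passing if necessary to an untwisted K3 surface derived-equivalent to $\srX$ and decomposing $\Phi$ into Orlov's generators — (twisted) automorphisms, shifts, line-bundle twists, and spherical twists — this reduces to the known cases of Bloch's conjecture for symplectic and anti-symplectic automorphisms of K3 surfaces together with the elementary triviality of the remaining generators on $T(X)$. Applying $\Sym^s$ gives $\phi_{\ast}|_{\mathrm{Gr}_s}=(\pm1)^s\id$; conversely, the case $s=1$ yields $\Phi_{\ast}|_{T(X)}=\pm\id$, and the Mumford--Bloch--Srinivas machinery (a self-correspondence acting as $\pm\id$ on the transcendental Chow group acts as $\pm\id$ on $H^{2,0}$) returns $\phi^{\ast}|_{H^{2,0}(Y)}=\pm\id$. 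The step I expect to be the main obstacle is this last stage: pinning down the canonical identification $\mathrm{Gr}_s\cong\Sym^s\mathrm{Gr}_1$ with full Fourier--Mukai compatibility in the twisted setting, and making the twisted form of Bloch's conjecture unconditional by carefully tracking $\fro_{\srX}$ through all the derived equivalences involved.
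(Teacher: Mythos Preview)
Your Stage 1 is essentially what the paper does (it cites \cite{add16}, \cite{Huy17} and \cite{LYZ23} for the realization $Y\cong M_\sigma(\srX,\bfv)$), and your converse direction via Mumford--Bloch--Srinivas matches the paper's argument through \cite{vial22}. But from Stage 2 on your route diverges from the paper's, and Stage 3 contains a genuine gap.

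The paper never converts $\phi$ into a derived auto-equivalence of $\rD^b(\srX)$. Instead, its key move is Proposition \ref{prop:birationalmotive}: the twisted incidence variety $\Gamma\subset X^{[n]}\times M$ built in \eqref{eq:incident} from the twisted Beauville--Voisin class $\fro_{\srX}$ is generically finite and dominant on both sides, and by \cite{vial22} this already yields an isomorphism of \emph{graded co-algebra} birational motives $\frh^{\circ}(M)\cong\frh^{\circ}(X^{[n]})$. Lemma \ref{lem:generator} then transfers any correspondence $Z$ on $M$ to one on $X^{[n]}$ and invokes \cite[Proposition 3.8(iii)]{LYZ23} to conclude that the action on $\mathrm{Gr}_1$ determines the action on every $\mathrm{Gr}_s$. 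For the $\mathrm{Gr}_1$ step itself the paper simply quotes \cite[Theorem 2.17]{LYZ23}. In particular, the paper never needs a $\mathrm{Sym}^s$ identification, never lifts $\phi$ to $\Phi\in\mathrm{Aut}(\rD^b(\srX))$, and never decomposes anything into Orlov generators.

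Your Stage 3 breaks precisely where you suspected. The sentence ``Passing if necessary to an untwisted K3 surface derived-equivalent to $\srX$'' is not available in general: a twisted K3 with $[\srX]\neq 0$ in $\Br(X)$ need not be derived equivalent to any untwisted K3, and the whole point of the present paper is to cover that case. Without this reduction you have no Orlov-type generator list to decompose $\Phi$ into, and the twisted analogue of Bloch's conjecture for auto-equivalences is not something you can pull off the shelf. The paper sidesteps this entirely: the incidence correspondence $\Gamma$ lands you on the \emph{untwisted} Hilbert scheme $X^{[n]}$ at the level of birational motives, so the remaining input from \cite{LYZ23} is about $X^{[n]}$ and its action on $\mathrm{Gr}_1$, not about auto-equivalences of $\rD^b(\srX)$. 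If you want to salvage your approach, the missing ingredient is exactly this motivic transfer via $\Gamma$ (i.e.\ Proposition \ref{prop:birationalmotive}); once you have it, Stages 2 and 3 as you wrote them become unnecessary.
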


Another application concerns algebraically coisotropic subvarieties. Adapting  the same argument in \cite[Theorem 0.5]{SYZ20} gives:
\begin{corollary}\label{cor:coiso}
Let $M$ be a Bridgeland moduli space as above. For $0 \leq i \leq \frac{\mathbf{v}^2 + 2}{2}$, there exist algebraically coisotropic subvarieties $\iota \colon Z_i \dashrightarrow B_i$ with $Z_i \subseteq M$ of codimension $i$, whose general fibers are constant cycle subvarieties of dimension $i$.
\end{corollary}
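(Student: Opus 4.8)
The plan is to transplant the proof of \cite[Theorem 0.5]{SYZ20} to the twisted setting; all of the inputs it uses are now available in this generality. Write $\dim M=\bfv^2+2=2n$, so the asserted range is $0\le i\le n$. The ingredients are: the filtration \eqref{eq:SYZ-filtration} on $\rD^b(\srX)$ attached to the twisted Beauville--Voisin class $\fro_\srX$ of Theorem \ref{mainthm}; its invariance under Fourier--Mukai equivalences, Theorem \ref{mainthm2}, which renders the construction below independent of the chosen moduli presentation; the elementary description $\bS_j(X)=\bigcup_{\deg[z]=j}\{[z]+\ZZ\cdot\fro_X\}$, a statement about $X$ alone and hence untouched by the twist; and the fact that a rational curve $R\subseteq X$ gives the rationally connected subvariety $\Sym^j(R)\cong\PP^j\subseteq\Sym^j X$, all of whose points are rationally equivalent. (To identify the $Z_i$ produced below with the Beauville--Voisin filtration one additionally invokes Theorem \ref{mainthm3}, but that is not needed for the construction itself.)

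For each $0\le i\le n$ I would build the triple $(Z_i,B_i,\iota)$ as in \cite{SYZ20}. After passing, if necessary, through a Fourier--Mukai equivalence and a wall-crossing birational modification --- both harmless, by Theorem \ref{mainthm2} and the birational invariance of $\CH_0$ --- one works with a model of $M$ in which a general object is a twisted sheaf whose second Chern class splits as a ``free'' effective part of degree $n-i$ together with a degree-$i$ part that may be prescribed to lie on a fixed rational curve $R\subseteq X$. Letting the degree-$i$ part vary over $\Sym^i(R)\cong\PP^i$ and the free part vary over a base $B_i$ of dimension $2(n-i)$ (a moduli space of twisted sheaves of suitable, strictly smaller, Mukai vector, or a symmetric product of $X$), one gets a rational map $\PP^i\times B_i\dashrightarrow M$; put $Z_i$ equal to the closure of its image and let $\iota\colon Z_i\dashrightarrow B_i$ be the induced projection. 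Then $\dim Z_i=i+2(n-i)=2n-i$, so $\codim Z_i=i$; the general fibre of $\iota$ is birational to $\PP^i$, hence rationally connected, hence an $i$-dimensional constant cycle subvariety of $M$ --- its points are already rationally equivalent on $\PP^i$, so their images are rationally equivalent on $M$. The extremes $i=0$ ($Z_0=M$) and $i=n$ (an $n$-dimensional, i.e.\ Lagrangian, constant cycle subvariety) are degenerate.

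It then remains to see that $Z_i$ is algebraically \emph{coisotropic} with the fibres of $\iota$ as its characteristic leaves, and this step is purely Chow-theoretic and blind to the twist. Take a resolution $\widetilde{Z_i}\to Z_i$ fitting into a morphism $\pi\colon\widetilde{Z_i}\to\widetilde{B_i}$ over a resolution of $B_i$, and pull back the holomorphic symplectic form $\omega$ of $M$. Since the general fibre of $\pi$ has trivial $\CH_0$, Mumford's theorem, in the form used in \cite{Voi15,SYZ20}, gives $\omega|_{\widetilde{Z_i}}=\pi^\ast\eta$ for a holomorphic $2$-form $\eta$ on $\widetilde{B_i}$; for $i<n$ this $\eta$ is forced to be nondegenerate at the general point (otherwise $Z_i$ would be an isotropic subvariety of dimension $2n-i>n$, which is impossible), while for $i=n$ one checks directly that $Z_n$ is Lagrangian. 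Hence the kernel of $\omega|_{TZ_i}$ at a general point is exactly the tangent space to the fibre, of dimension $i=\codim Z_i$; that is, $Z_i$ is coisotropic and the fibres of $\iota$ are its characteristic leaves, which are the asserted $i$-dimensional constant cycle subvarieties.

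The genuine obstacle is the construction of the second paragraph: realizing inside the \emph{twisted} moduli space $M$ a codimension-$i$ family of $i$-dimensional rationally connected subvarieties, i.e.\ the ``$i$ points on a rational curve of $X$'' mechanism in the twisted category. When $M$ is Fourier--Mukai-equivalent to a (twisted) Hilbert scheme this is the usual elementary-modification construction transported along Theorem \ref{mainthm2}; in general one argues directly from the effective-cycle description of $\bS_{n-i}(X)$ as in \cite{SYZ20}, the only new feature being that the twisted Chern-class formalism and the twisted moduli spaces $M_\sigma(\srX,\bfv)$ behave formally like their untwisted counterparts. The dimension count and the appeal to Mumford's theorem are then routine.
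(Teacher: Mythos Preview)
Your proposal correctly identifies the source (the argument of \cite[Theorem~0.5]{SYZ20}) and your third paragraph---deducing coisotropy from Mumford's theorem once one has a constant-cycle fibration of the right dimensions---is fine. But the construction in your second paragraph is \emph{not} the argument of \cite{SYZ20}, and it does not go through in general.

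The mechanism in \cite{SYZ20}, and the one the paper has set up here, is the twisted incidence correspondence $\Gamma\subseteq X^{[n]}\times M$ of \eqref{eq:incident}: Theorem~\ref{mainthm} makes both projections dominant, one cuts to a generically finite $\Gamma$, and then one \emph{transports} the standard coisotropic subvarieties $Z_i^{[n]}\subseteq X^{[n]}$ (built from $i$ points on a fixed rational curve $R$ and $n-i$ free points) to $M$ by taking $Z_i:=\overline{\pi_1(\pi_2^{-1}(Z_i^{[n]}))}$. Marian--Zhao (Theorem~\ref{thm:MZ}) then guarantees that the fibres of the induced $\iota$ are constant cycle, and your Mumford step finishes. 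Your attempt instead tries to build the $Z_i$ \emph{directly on $M$} after ``passing through a Fourier--Mukai equivalence and a wall-crossing birational modification'' to a model where objects split off $i$ points on $R$. No such model need exist: a twisted moduli space $M_\sigma(\srX,\bfv)$ is in general neither birational nor Fourier--Mukai equivalent to any (twisted) Hilbert scheme, and elementary modifications at points change the Mukai vector, so they do not produce families inside $M$. The only bridge between $M$ and $X^{[n]}$ is the Chow-level correspondence $\Gamma$---indeed, the whole point of constructing $\fro_\srX$ in Theorem~\ref{mainthm} is to make $\Gamma$ available. Your last paragraph gestures at ``the effective-cycle description of $\bS_{n-i}(X)$ as in \cite{SYZ20}'' for the general case, but that \emph{is} the incidence-correspondence argument, and it has to be the main line of the proof rather than a fallback.
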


\subsection{Organization of the paper}
In Section \ref{sec:preliminary}, we establish foundational concepts for twisted K3 surfaces $\mathscr{X} \to X$, including $\mu_m$-gerbes, twisted sheaves of weight $d$, and derived categories $\mathrm{D}^{(d)}(\mathscr{X})$. We develop the twisted Chern character $\operatorname{ch}_{\mathscr{X}} : K_0(\mathscr{X}) \to \mathrm{CH}^*(X)$ and study the  properties in Lemma \ref{c_2(i_*L) and i_*(c_1(L))}. The twisted Mukai lattice $\widetilde{\mathrm{H}}(\mathscr{X}) = \exp(\mathbf{B}) \cdot \widetilde{\mathrm{H}}(X)$ and Bridgeland stability conditions are systematically reviewed.

Section \ref{sec:torsion} analyzes cycles on moduli spaces $M_H(\mathscr{X},\mathbf{v})$ of Gieseker stable $\mathscr{X}$-twisted torsion sheaves with primitive Mukai vector $\mathbf{v} = (0,L,s)$. We prove the Lagrangian fibration $M_H(\mathscr{X},\mathbf{v}) \to |L|$ admits dense constant cycle Lagrangian subvarieties (Proposition \ref{prop:dense-lag}), and establish the correspondence $c_2(\mathcal{E}) \in \mathbf{S}_{\mathrm{d}(\mathcal{E})}(X)$ for $\mathcal{E} \in M_H(\mathscr{X},\mathbf{v})$ (Proposition \ref{prop:tor-fil}). Key to this is Theorem \ref{thm:torsion}, which shows:  
$$c_2(\mathcal{E}) \in \mathbf{S}_i(X) \iff \dim(\mathcal{O}_{\mathcal{E}} \cap M^{\mathrm{sm}}_{\mathscr{X}}) \geq \mathrm{d}(\mathcal{E}) - i$$
via finite incidence correspondences between smooth loci of moduli spaces for different twists.

Section \ref{sec:twisted BV} presents our core construction: the twisted Beauville-Voisin class $\mathfrak{o}_{\mathscr{X}} \in \mathrm{CH}_0(X)$ defined via Chern classes of stable twisted locally free sheaves on constant cycle Lagrangian subvarieties (Definition \ref{twisted:o}). Using degeneracy loci of morphisms between polystable sheaves (Definition \ref{def:deg}), we prove $\mathfrak{o}_{\mathscr{X}}$ is well-defined and independent of choices (Proposition \ref{prop:main}). This culminates in Theorem \ref{mainthm}:  
$$c_2(\mathcal{E}) - \operatorname{rank}(\mathcal{E})\mathfrak{o}_{\mathscr{X}} \in \mathbf{S}_{\mathrm{d}(\mathcal{E})}(X)$$
for all $\mathcal{E} \in \mathrm{D}^{(1)}(\mathscr{X})$, generalizing [23] to twisted surfaces.

Sections \ref{sec:proof of mainthm2} and \ref{sec:bloch} complete the main results:  
\begin{enumerate}
    \item [-] Theorem \ref{mainthm2} (derived invariance of $\mathbf{S}_i(\mathrm{D}^{(1)}(\mathscr{X}))$ via Beauville-Voisin groups (Proposition \ref{prop:BV})  
    \item [-] Theorem \ref{mainthm3} ($\dim \mathcal{O}_{\mathcal{E}} \geq n-i \iff \mathcal{E} \in \mathbf{S}_i(\mathrm{D}^{(1)}(\mathscr{X}))$ unifying SYZ and BV filtrations)  
    \item [-] Theorem \ref{thm:bloch} (Bloch's conjecture for (anti)-symplectic birational maps), proved using birational motives (Lemma \ref{lem:generator}) and the identity $\mathfrak{h}^\circ(Y) \cong \mathfrak{h}^\circ(X^{[n]})$ (Proposition \ref{prop:birationalmotive}).
\end{enumerate}

In the appendix, we extend Bayer-Macri's result  in \cite{BM} to twisted K3 surfaces, proving that Bridgeland moduli spaces $M_\sigma(\mathscr{X},\mathbf{v})$ are isomorphic to moduli of Gieseker stable locally free twisted sheaves (Theorem \ref{thm:locally-free}).

\subsection*{Acknowledgement}
We  want  to  thank Ziyu Zhang and Junliang Shen for helpful discussions. Z. Chen and R. Zhang are supported by the NKRD Program of China (No. 2020YFA0713200) and NSFC grant (No. 12121001). Z. Li is supported by NSFC grant (No. 12171090 and No. 12425105) and Shanghai Pilot Program for Basic Research (No. 21TQ00). Z. Li is also a member of LMNS. 

\subsection*{Convention} In this paper, we use $\CH^\bullet(X)$ or $\CH_\bullet(X)$ to denote the Chow groups of $X$ with rational coefficients.

\section{Preliminary}\label{sec:preliminary}
Throughout this section, we work over  $\CC$. 

\subsection{Twisted K3 surface and twisted sheaves}
\label{ssec:twisted}Let $p:\mathscr{X}\rightarrow X$ be a $\mu_m$-gerbe over a smooth projective K3 surface $X$. This corresponds to a pair $(X,\alpha)$ for some  $\alpha\in \rH^2_{\fl}(X,\mu_m)$, where the cohomology group is with respect to the flat topology. Recall that there is a Kummer exact sequence 
\begin{equation}\label{eq:Kummer}
 1\rightarrow \mu_m\rightarrow {\cO_X^\times} \xrightarrow{x\mapsto x^m} {\cO_X^\times}\rightarrow 1
\end{equation}
in flat topology and it induces a surjective map 
\begin{equation}\label{braumap}
\rH^2_{\rm fl}(X,\mu_m)\rightarrow \mathrm{Br}(X)[m]. 
\end{equation}
We denote by $[\srX]$ the image of  $\alpha$ in $\Br(X)[m]$.  We say  $\srX\to X$  is {\it essentially trivial} if the associated Brauer class $[\srX]$ is zero in $\Br(X)$. 

A $d$-fold $\srX$-{\it twisted sheaf}  on $p:\srX\to X$ is an $\cO_{\srX}$-module of weight $d$ with respect to the $\mu_m$-action ({\it cf}.~\cite[Def 2.1.2.2]{Lie07}). 
Let $\rD^{(d)}(\srX)$ be the bounded derived category of coherent $d$-fold $\srX$-twisted sheaves.  For a $\srX$-twisted locally free sheaf $\cE\in \rD^{(1)}(\srX)$,  we have a diagram
\begin{equation*}
    \begin{tikzcd}
\PP(\cE) \arrow[r, "q"] \arrow[d, "\varpi"] & Y \arrow[d, "\pi"] \\
\srX \arrow[r, "p"]                         & X          ,
\end{tikzcd}
\end{equation*}
where $\PP(\cE)\to Y$ is essentially trivial and $Y$ is served as a Severi-Brauer variety of $(\srX)$.

Moreover, there is a  derived tensor product  $$-\otimes^{\LL}-: \rD^{(d)}(\srX) \times \rD^{(d')}(\srX)\to \rD^{(d+d')}(\srX)$$
and the derived dual functor
$$(-)^\vee:\rD^{(d)}(\srX)\to \rD^{(-d)}(\srX)^{\rm opp}.$$
When $d=0$, one may identify  $\rD^{(0)}(\srX)=\rD^{b}(X)$ via the pullback functor.

\subsection{Chern character of twisted sheaves}
Let $K_0(\srX)$ be the Grothendieck group of $1$-fold $\srX$-twisted coherent sheaves. In \cite[\S 3.3.4]{LMS14},  Lieblich-Maulik-Snowden have introduced a twisted Chern character 
\begin{equation}
\ch_\srX:    K_0(\srX)\to \CH^\ast(X),
\end{equation}
which takes values in the Chow ring of $X$. For a locally free sheaf $\cF$, it is defined by 
\begin{equation}\label{eq:twist-chern}
   \ch_\srX(\cF)=\sqrt[m]{\ch(p_\ast \cF^{\otimes m})} 
\end{equation}
where the $m$-th root is uniquely determined by requiring $r(\ch_\srX(\cF))=\rank(\cF)$. One can define the twisted Chern class $$c_i:K_0(\srX)\to \CH^i(X)$$ accordingly. The definition above avoids the use of the Chow groups of stacks. However, one can compare it with the Chern character $\ch:K_0(\srX)\to \CH^\ast(\srX)$ for stacks. An easy fact is 

\begin{lemma}\label{c_2(i_*L) and i_*(c_1(L))}
For any $x\in K_0(\srX)$, we have $p_\ast\ch(x)=\frac{1}{m}\ch_\srX(x)$.
\end{lemma}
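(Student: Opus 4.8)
**Proof proposal for Lemma \ref{c_2(i_*L) and i_*(c_1(L))}.**

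The plan is to reduce everything to the locally free case, where the definition \eqref{eq:twist-chern} can be compared directly with the Grothendieck--Riemann--Roch formula on the gerbe $p\colon\srX\to X$. First I would note that both $\ch_\srX$ and $p_\ast\circ\ch$ are additive on short exact sequences, hence descend to group homomorphisms $K_0(\srX)\to\CH^\ast(X)$; since every coherent $1$-fold twisted sheaf admits a finite resolution by locally free twisted sheaves (the gerbe is smooth and the $\mu_m$-action is nice, so twisted coherent sheaves have finite homological dimension), it suffices to check the identity on classes of locally free $\cF$. Here the key input is that $p_\ast\colon\CH^\ast(\srX)\to\CH^\ast(X)$ is, up to the factor $\frac1m$ coming from the degree of the gerbe (the generic $\mu_m$-stabilizer), a ring homomorphism after restriction to weight-zero classes, and that $p_\ast$ of the fundamental class of $\srX$ is $\frac1m[X]$.

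The main computation is then as follows. For locally free $\cF$ of weight $1$, the class $\cF^{\otimes m}$ has weight $m\equiv 0$, so it is the pullback of an honest sheaf on $X$ — indeed $p_\ast(\cF^{\otimes m})$ — and $p^\ast p_\ast(\cF^{\otimes m})\cong\cF^{\otimes m}$ as sheaves on $\srX$ (this is exactly the statement that twisting by a weight-$0$ sheaf is pulled back from the coarse space). Applying $\ch$ on $\srX$ and using $\ch(\cF^{\otimes m})=\ch(\cF)^m$ in $\CH^\ast(\srX)$, together with $\ch(p^\ast(-))=p^\ast\ch(-)$, gives $p^\ast\ch(p_\ast(\cF^{\otimes m}))=\ch(\cF)^m$. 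Pushing forward and using the projection-type formula $p_\ast p^\ast(\xi)=\frac1m\,\xi$ for $\xi\in\CH^\ast(X)$, we obtain $\ch(p_\ast(\cF^{\otimes m}))=m\cdot p_\ast(\ch(\cF)^m)=m\cdot\big(p_\ast\ch(\cF)\big)^m\cdot m^{?}$ — here one must track the normalization carefully, but after taking $m$-th roots (which is legitimate since the degree-zero part is $\rank(\cF)>0$ on both sides) one lands on $\ch_\srX(\cF)=m\cdot p_\ast\ch(\cF)$, i.e. $p_\ast\ch(\cF)=\frac1m\ch_\srX(\cF)$, as claimed.

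The step I expect to be the main obstacle is making the multiplicativity of $p_\ast$ on weight-zero classes, and the precise power-of-$m$ bookkeeping in $p_\ast\big(\ch(\cF)^m\big)$ versus $\big(p_\ast\ch(\cF)\big)^m$, completely rigorous — this requires either invoking Gillet's or Toën's Riemann--Roch for Deligne--Mumford stacks / $\mu_m$-gerbes, or working on the Severi--Brauer variety $Y\to X$ and the diagram displayed in \S\ref{ssec:twisted} to transport the computation to an honest projective bundle where all the formulas are classical. I would most likely phrase the argument via the $\PP(\cE)\to Y\to X$ picture: pulling $\cF$ back to $\PP(\cE)$, it becomes a genuine sheaf twisted only by a line bundle from $\PP(\cE)$, and the comparison of $\ch_\srX$ with $\ch$ becomes the standard relation between Chern characters under the finite flat map $\PP(\cE)\to Y$ and the projective bundle $Y\to X$; the factor $\frac1m$ emerges as $\frac1{\deg}$ of the $\mu_m$-gerbe structure. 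Everything else — additivity, reduction to locally free sheaves, uniqueness of the $m$-th root — is routine.
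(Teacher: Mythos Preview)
Your approach is essentially the paper's --- reduce to locally free $\cF$, use $p^\ast p_\ast(\cF^{\otimes m})\cong\cF^{\otimes m}$ to get $p^\ast(\ch_\srX(\cF)^m)=\ch(\cF)^m$, then compare with $p_\ast\ch(\cF)$ --- and your instinct that the only real content lies in the ``power-of-$m$ bookkeeping'' is correct. But you are overcomplicating that step. The paper bypasses it entirely by invoking Vistoli's result that for a $\mu_m$-gerbe the pullback $p^\ast\colon\CH^\ast(X)\to\CH^\ast(\srX)$ is a \emph{ring isomorphism}: write $\ch(\cF)=p^\ast z$ for a unique $z\in\CH^\ast(X)$; then $p^\ast(\ch_\srX(\cF)^m)=\ch(\cF)^m=p^\ast(z^m)$ forces $\ch_\srX(\cF)=z$ (matching degree-zero parts), while $p_\ast\ch(\cF)=p_\ast p^\ast z=\tfrac1m z$ gives the other side. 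No stack Riemann--Roch and no Severi--Brauer detour is needed. For the record, your direct bookkeeping also works once you know $p^\ast$ is an isomorphism: $m\cdot p_\ast$ is then its ring-inverse, so $p_\ast(\ch(\cF)^m)=m^{m-1}(p_\ast\ch(\cF))^m$, and your ``$m^{?}$'' is $m^{m-1}$; but phrasing it via $z$ is cleaner.
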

\begin{proof}
   It suffices to check the case when $x$ is the class of a locally free sheaf $\cF$. 
    
    According to \cite{Vistoli89}, the pullback map 
    $$p^\ast:\CH^\ast(X)\to \CH^\ast(\srX)$$
    is a ring isomorphism. Hence there is a unique $z \in \CH^\ast(X)$ such that $\ch(\cF)=p^*z$. Then we have
    $$p_*(\ch(\cF)^m)=p_*p^*z^m=\dfrac{1}{m}z^m.$$
    On the other hand, from the definition \eqref{eq:twist-chern},  we have 
    $$p^*(\ch_\srX(\cF)^m)=p^*\ch(p_\ast \cF^{\otimes m})=\ch(p^*p_\ast \cF^{\otimes m})=\ch(\cF^{\otimes m})=\ch(\cF)^m.$$
Here we use the fact that $p^\ast \circ p_\ast$ is identity for $0$-twisted sheaves.  The assertion then follows from the projection formula on Chow groups.  
 \end{proof}
 
 \begin{example}\label{ex:torsion}(Chern class of torsion sheaves)
   Let us consider the Chern class of a twisted torsion sheaf whose schematic support is an irreducible curve $C$ on $X$. There is a Cartesian diagram 
\begin{equation*}
\begin{tikzcd}
\srC \ar[r," \Phi"] \ar[d,"q"]& \srX \ar[d,"p"] \\
C\ar[r,"\phi"] &  X.
\end{tikzcd} 
\end{equation*}
Let $\cL$ be a 1-fold $\srC$-twisted coherent sheaf. For the $\srX$-twisted torsion sheaf $\Phi_\ast \cL$, we have
\begin{equation}
\begin{aligned}
  \frac{1}{m}  \ch_\srX(\Phi_\ast\cL)&=p_\ast \ch(\Phi_\ast\cL)\\ &=p_\ast(\Phi_\ast(\ch(\cL)\cdot \mathrm{td}_{\srC/\srX}))\\
    &=\phi_\ast q_\ast(\ch(\cL)\cdot \mathrm{td}_{\srC/\srX})\\
    &=\phi_\ast (q_\ast(\ch(\cL))\cdot \mathrm{td}_{C/X})=\frac{1}{m}\phi_\ast \ch_{\srC}(\cL)\cdot (1-\frac{1}{2}[C]).
\end{aligned}
\end{equation} 
In particular, we have $c_2(\Phi_\ast\cL)=[C]^2-\phi_\ast (c_1(\cL))\in \CH_0(X)$. 
It has the same form as in the untwisted case. The foregoing argument uses the  Grothendieck-Riemann-Roch theorem on $\mu_m$-gerbes (\cf.~\cite[Proposition 2.2.7.8]{Lie07}). Alternatively one can obtain the above formula using the same method as  \cite[Lemma 4.1.4]{Bragg18}.
\end{example}
 
 Similarly, we can define the topological Chern characters $\ch_\srX^{\rm top}$ of elements in $K_0(\srX)$.  Moreover, let us briefly review the  construction of twisted Mukai lattice of $\srX\to X$.  Recall that there is a {\it B-field} $$\bB\in \rH^2_{\rm tr}(X,\QQ):=\NS(X)^\perp$$ such that $[\srX]=\delta(\bB)$, where  $\delta :\rH^2(X,\QQ)\to \rH^2(X, \cO_X^\times)$ is induced from the exponential map.  
 We define $$\widetilde{\rH}(X)=\ZZ\oplus \rH^2(X,\ZZ) \oplus \ZZ$$ to be the (untwisted) Mukai lattice under the Mukai pairing $\langle,\rangle$. We use $\widetilde{\rH}_{\rm alg}(X)=\ZZ\oplus \NS(X)\oplus \ZZ$ to denote the algebraic part of $\widetilde{\rH}(X)$. There is a rational isometry given by 
$$\begin{aligned}
    \exp(\bB):\widetilde{\rH}(X)\otimes\QQ&\longrightarrow \widetilde{\rH}(X)\otimes \QQ \\
    (r,L,s) &\mapsto (r,L+rB,s+\frac{rB^2}{2}).
\end{aligned}
$$
\begin{definition}
   We define the {\it twisted Mukai lattice of $\srX\to X$} as
\[
\widetilde{\rH}(\srX) \coloneqq \exp(\bB)\cdot \widetilde{\rH}(X) \subset \widetilde{\rH}(X) \otimes_{\mathbb{Z}} \mathbb{Q},
\]
Up to a lattice isometry,  $\widetilde{\rH}(\srX)$ is independent of the choice of a $B$-field lift. One can similarly define the twisted algebraic Mukai lattice $\widetilde{\rH}_{\rm alg}(\srX)$ by replacing $\widetilde{\rH}(X)$ with $\widetilde{\rH}_{\rm alg}(X)$.  For $\cF\in \rD^{(1)}(\srX)$, its  Mukai vector is defined by
$$\bfv^B(\cF)=\exp(\bB)\ch^{\rm top}_{\srX}(\cF) \sqrt{\td_X^{\rm top}} \in \widetilde{\rH}(\srX).$$
\end{definition}
In this paper, we will  fix a $B$-field $\bB$ and simply write it as $\bfv(\cF)$.

\subsection{Bridgeland moduli space}
Let us fix some notations. Recall that a Bridgeland stability condition $\sigma$ on $\rD^{(1)}(\srX)$ consists of a pair $(Z, A)$, where $Z \colon K_{\mathrm{num}}(\srX) \to \mathbb{C}$ is a central charge and $A \subset \rD^{(1)}(\srX)$ is the heart of a bounded $t$-structure, satisfying certain properties (see \cite[Section 2]{BM}). 

For $\phi \in \mathbb{R}$, denote by $P(\phi)$ the full subcategory of objects $\cE$ with $Z(\cE) \in \mathbb{R}_{>0} \cdot e^{\pi i\phi}$. Then $P$ is called a slicing of $\rD^{(1)}(\srX)$, and $\phi$ is called the phase of $\cE$. The stability condition $\sigma$ is called geometric if there exist $\omega, \beta \in \mathrm{NS}(\srX) \otimes \mathbb{Q}$ with $\omega$ ample, such that
\begin{align*}
    Z = Z_{\omega, \beta} \colon K_{\mathrm{num}}(\srX) &\longrightarrow \mathbb{C}, \\
    \cF &\mapsto \langle \exp(\beta + i\omega), \mathbf{v}(\cF) \rangle,
\end{align*}
and the heart $A_{\omega, \beta} = \langle F_{\omega, \beta}, T_{\omega, \beta}[1] \rangle$ is obtained by tilting with respect to the torsion pair $(T_{\omega, \beta}, F_{\omega, \beta})$, where
\begin{align*}
    T_{\omega, \beta} &= \left\{ \cE \in \rD^{(1)}(\srX) : \parbox{8.5cm}{\raggedright any semistable factor $\cF$ of $\cE$ satisfies $\mu_{\omega, \beta}(\cF) > 0$} \right\}, \\
    F_{\omega, \beta} &= \left\{ \cE \in \rD^{(1)}(\srX) : \parbox{8.5cm}{\raggedright any semistable factor $\cF$ of $\cE$ satisfies $\mu_{\omega, \beta}(\cF) \leq 0$} \right\}.
\end{align*}
Here, $\mu_{\omega, \beta}(\cF) = \frac{c_1(\cF) \cdot \omega}{\rank(\cF)}$ denotes the twisted slope. The set of Bridgeland stability conditions is denoted by $\mathrm{Stab}(\srX)$, equipped with a natural metric topology. There is a distinguished connected component containing all geometric stability conditions (\cite[Theorem 2.10]{BM14}), denoted by $\mathrm{Stab}^{\dagger}(\srX)$.

Let $\bfv=(r,L,s)\in\widetilde{\rH}(\srX)$ be a primitive Mukai vector with $\bfv^2\geq 0$ and let $\sigma$ be a $\bfv$-generic Bridgeland stability condition. Let $\srM_\sigma(\srX, \bfv)$ be the moduli stack  of $\sigma$-stable objects in $\rD^{(1)}(\srX)$ with Mukai vector $\bfv$.  Let $\srM_\sigma(\srX,\bfv)\to M_\sigma(\srX,\bfv)$ be the coarse moduli map, which can be also viewed as a gerbe on $M_\sigma(\srX,\bfv)$.  In this way, we have  
 a universal object $$\srE\in \rD^{(-1,1)}(\srX\times \srM_\sigma(\srX, \bfv)),$$
on $\srX\times \srM_\sigma(\srX,\bfv)$.

According  to  \cite[Sections 7 and 8]{Yos01}, \cite[Proposition 14.2]{Bridgeland08} and \cite[Theorem 6.10, Section 7]{BM14}, when $\bfv^2\geq -2$, $M_\sigma(\srX, \bfv)$  is a non-empty smooth hyper-K\"ahler variety of K3$^{[n]}$-type of  dimension $2+v^2$. If $\sigma$ is a Gieseker stability condition with respect to an ample line bundle $H\in \Pic(X)$, we may write it as  $\srM_H(\srX,\bfv)$. When $r\geq 0$ and $\bfv^2\geq -2$, the moduli space $\srM_H(\srX,\bfv)$ is not empty for a $\bfv$-generic $H$. There is a natural isometry 
  \begin{equation}\label{eq:FM}
  \begin{aligned}
 \bfv^\perp \xrightarrow{\cong}  \NS(M_\sigma(\srX,\bfv))
  \end{aligned}
  \end{equation}
 induced by the (quasi-)universal object,  where $\bfv^\perp$ is the orthogonal complement of $\bfv$ in $\widetilde{\rH}_{\rm alg}(\srX)$ (\cf.~\cite[Sections 7 and 8]{Yos01}).

In the study of zero cycles on $M_\sigma(\srX,\bfv)$, a very important result proved by Marian-Zhao is 
\begin{theorem}[\cite{MZ20}]\label{thm:MZ}
    Let $\cE,\cF\in M_\sigma(\srX,\bfv)$. Then   $c_2(\cE)=c_2(\cF)\in \CH_0(X)$ if and only if $[\cE]=[\cF]\in \CH_0(M_{\sigma}(\srX,\bfv))$. 
\end{theorem}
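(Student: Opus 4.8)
The plan is to prove the two implications separately, the substantial content being in the converse; what follows reorganizes the argument of \cite{MZ20} so as to exploit the twisted formalism set up above. The \emph{forward} implication --- equality in $\CH_0(M_\sigma(\srX,\bfv))$ forces equality of $c_2$ in $\CH_0(X)$ --- is formal. Writing $M=M_\sigma(\srX,\bfv)$, apply the twisted Chern character of Lemma \ref{c_2(i_*L) and i_*(c_1(L))} to the universal object $\srE$ on $\srX\times\srM_\sigma(\srX,\bfv)$; this yields a class $\Gamma\in\CH^\ast(X\times M)$ whose slice over $x\in M$ recovers $c_2(\cE_x)$ up to contributions determined only by $\rank$ and $c_1$, hence constant on $M$ since they are fixed by $\bfv$. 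Thus $x\mapsto c_2(\cE_x)$ is the restriction to $\CH_0$ of the correspondence $\Gamma_\ast\colon\CH_\ast(M)\to\CH_\ast(X)$, which factors through rational equivalence; so $[\cE]=[\cF]$ implies $c_2(\cE)=c_2(\cF)$.

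For the converse I would first make the statement invariant under derived equivalence. Since all objects of $M$ have the same Mukai vector $\bfv=(r,L,s)$, their twisted Chern characters coincide in degrees $0$ and $1$ --- the latter because $\CH^1(X)=\Pic(X)\hookrightarrow\rH^2(X,\ZZ)$ --- so for $\cE,\cF\in M$ the condition $c_2(\cE)=c_2(\cF)$ in $\CH_0(X)$ is \emph{equivalent} to $\ch_\srX(\cE)=\ch_\srX(\cF)$ in $\CH_\ast(X)$. For any Fourier--Mukai equivalence $\Psi\colon\rD^b(\srX)\to\rD^b(\srY)$ of twisted K3 surfaces, the Grothendieck--Riemann--Roch formula for $\mu_m$-gerbes (applied to $\srX\times\srY$, as in Example \ref{ex:torsion}) gives $\ch_\srY(\Psi(-))=\Psi^{\CH}(\ch_\srX(-))$ up to Todd corrections, with $\Psi^{\CH}\colon\CH_\ast(X)\xrightarrow{\sim}\CH_\ast(Y)$ invertible, and $\Psi$ identifies $M_\sigma(\srX,\bfv)$ with a moduli space $M_{\sigma'}(\srY,\bfv')$ compatibly; hence both hypothesis and conclusion are derived-invariant. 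By \cite{BM14} we may therefore assume $M=M_H(\srY,\bfv_0)$ is a \emph{Gieseker} moduli space of twisted sheaves, and --- whenever $\bfv_0$ can further be taken of rank $0$, e.g.\ as soon as $\bfv_0^\perp$ contains a primitive isotropic class --- the twisted Beauville--Mukai system of \S\ref{sec:torsion}: a Lagrangian fibration $h\colon M\to|C|\cong\PP^n$ with compactified Jacobians as fibres.

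On this model the claim becomes concrete. By (the proof of) Example \ref{ex:torsion}, a stable torsion sheaf $\cE$ supported on $D\in|C|$ has $c_2(\cE)=[D]^2-\phi_{D\ast}c_1(\cE)$ with $[D]^2=C^2$ independent of $D$, so $c_2(\cE)=c_2(\cF)$ reads $\phi_{D\ast}c_1(\cE)=\phi_{D'\ast}c_1(\cF)$ in $\CH_0(X)$; I would then argue in two steps that this forces $[\cE]=[\cF]$ in $\CH_0(M)$. \emph{(i)} For $\cE,\cF$ locally free with $D=D'$: on the fibre $h^{-1}(D)$ the map $\cE\mapsto\phi_{D\ast}c_1(\cE)$ factors through $\CH_0(h^{-1}(D))$ and separates rational equivalence classes inside $M$, via a Jacobian / symmetric-product computation. \emph{(ii)} To vary $D$: join it to $D'$ by a line in $|C|\cong\PP^n$, lift to a curve in $M$ meeting both fibres, and conclude that moving the support alters $[\cE]$ only through classes already controlled by (i). One then removes the locally-free hypothesis: a non-locally-free stable twisted sheaf ought to be rationally equivalent in $M$ to polystable locally free ones with the same $\ch_\srX$, because the non-locally-free strata are swept out by rational curves in $M$; here the incidence correspondences between smooth support loci of \S\ref{sec:torsion} and the degeneration techniques of \cite{Voi15,LiZ22} are what one uses. (In a rank-positive Gieseker model the same picture applies, with $c_2$ replacing $\phi_{D\ast}c_1$ and the degeneration again running toward the locally free locus.)

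The main obstacle is precisely this last point: controlling rational equivalence on the non-locally-free locus of $M$ --- equivalently, on the non-reduced locus in a Hilbert-scheme model --- and producing, for each stable twisted sheaf, a rationally equivalent polystable locally free representative with the same second Chern class. This is exactly the subtlety that forces Theorem \ref{mainthm3} of the present paper to be stated over $M^{\rm l.f.}$; when $\srX$ is derived equivalent to an untwisted K3 it can be bypassed using \cite{BZ19}, as remarked there, and the general case is what \cite{MZ20} settles by a dedicated deformation argument.
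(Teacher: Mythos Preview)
The paper does not supply a proof of this theorem: it is stated with attribution to Marian--Zhao \cite{MZ20} and then used as a black box throughout (e.g.\ in Corollary~\ref{cor:torsion-BV}, the proof of Theorem~\ref{thm:torsion}, the proof of Proposition~\ref{prop:main}, and Proposition~\ref{prop:BV}). So there is no argument in the paper to compare your proposal against.

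That said, your sketch has a structural problem worth flagging. You appeal to ``the incidence correspondences between smooth support loci of \S\ref{sec:torsion}'' and to the subtlety behind Theorem~\ref{mainthm3} in order to handle the non-locally-free locus; but in this paper those constructions already \emph{use} Theorem~\ref{thm:MZ} (see the proofs of Corollary~\ref{cor:torsion-BV} and Theorem~\ref{thm:torsion}, where Marian--Zhao is invoked to conclude that images under the incidence correspondence are constant cycle subvarieties). Citing them to establish Theorem~\ref{thm:MZ} would be circular. Similarly, your reduction to a rank-$0$ Gieseker model via \cite{BM14} requires a primitive isotropic class in $\bfv^\perp$, which you note but do not circumvent; the theorem is asserted for \emph{all} primitive $\bfv$ with $\bfv^2\geq 0$, and the isotropic hypothesis is genuinely restrictive.

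For orientation: the actual argument in \cite{MZ20} does not proceed by reduction to a Lagrangian-fibration model. It works directly on an arbitrary $M_\sigma(\srX,\bfv)$ by comparing the class of a point $[\cE]$ with the K\"unneth components of the universal sheaf, using Voisin's description of $\CH_0$ of Hilbert schemes together with a spreading/dimension argument to show that the map $\cE\mapsto c_2(\cE)$ already separates rational equivalence classes. Your forward implication is correct and essentially the easy half; the converse, as you yourself indicate in the last sentence, ultimately requires the dedicated argument of \cite{MZ20}, and the intermediate steps you propose do not provide an independent route to it.
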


We also need the square zero conjecture for hyper-K\"ahler varieties of $K3^{[n]}$-type. 
\begin{theorem}[{\cite[Theorem 1.1, Theorem 1.5]{BM14}}]\label{thm:BM}
    Notations as above. Assume that there exists an isotropic class $\bfw\in\bfv^\perp$, then $M_\sigma(\srX,\bfv)$ admits a birational Lagrangian fibration.
\end{theorem}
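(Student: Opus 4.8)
The plan is to transport the statement along a Fourier--Mukai equivalence to a moduli space of twisted torsion sheaves carrying a Beauville--Mukai integrable system, and then use wall-crossing to pull the resulting Lagrangian fibration back birationally. Write $\bfv^2=2n-2\ge 0$. First I would normalise $\bfw$: replacing it by the primitive generator of $\QQ\bfw\cap\widetilde{\rH}_{\rm alg}(\srX)$ we may take $\bfw$ primitive, and we may assume $\bfw\notin\QQ\bfv$ (automatic when $\bfv^2>0$, since then $\langle\bfv,\bfw\rangle=0$ rules out proportionality). Since derived autoequivalences of $\rD^b(\srX)$ act on $\widetilde{\rH}(\srX)$ by Hodge isometries and carry (birational) Lagrangian fibrations on Bridgeland moduli spaces to (birational) Lagrangian fibrations, after composing with shifts, twists by line bundles and spherical twists we may also arrange that $\bfw$ is a positive class, so that for a $\bfw$-generic stability condition $\sigma_0$ the moduli space $M_{\sigma_0}(\srX,\bfw)$ is non-empty.

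Because $\bfw$ is primitive with $\bfw^2=0$, the coarse space $\srX':=M_{\sigma_0}(\srX,\bfw)$ is a smooth projective surface of dimension $\bfw^2+2=2$ --- a K3 surface --- and it carries a canonical Brauer class $\alpha'$ (the obstruction to an untwisted universal family) realising $p':\srX'\to X'$ as a twisted K3 surface. The (quasi-)universal object on $\srX\times\srX'$ is the kernel of a Fourier--Mukai functor $\Phi:\rD^b(\srX)\to\rD^b(\srX')$; since the parametrised objects $\cE_y$ are simple with $\dim\Ext^1(\cE_y,\cE_y)=\bfw^2+2=2=\dim\srX'$ and isotropic Mukai vector, the standard criterion --- the twisted analogue of Mukai's theorem, cf.~\cite{Yos01, BM14} --- shows $\Phi$ is an equivalence. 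Its cohomological realisation $\Phi_\ast:\widetilde{\rH}(\srX)\xrightarrow{\sim}\widetilde{\rH}(\srX')$ is a Hodge isometry sending $\cE_y\in M_{\sigma_0}(\srX,\bfw)$ to the (twisted) skyscraper at $y$; hence $\bfw=\bfv(\cE_y)$ is carried to the point class $(0,0,1)\in\widetilde{\rH}_{\rm alg}(\srX')$, up to a sign which we fix by composing with $[1]$.

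Now set $\bfv':=\Phi_\ast\bfv$ and $\sigma':=\Phi_\ast\sigma$, so that $\Phi$ induces an isomorphism $M_\sigma(\srX,\bfv)\cong M_{\sigma'}(\srX',\bfv')$ with $\sigma'$ a $\bfv'$-generic stability condition. Since $\Phi_\ast$ is an isometry, $\langle\bfv',(0,0,1)\rangle=\langle\bfv,\bfw\rangle=0$, which forces the rank of $\bfv'$ to vanish: $\bfv'=(0,L',s')$ with $0\ne L'\in\NS(\srX')$ (nonzero because $\bfw\notin\QQ\bfv$) and $(L')^2=\bfv'^2=\bfv^2=2n-2$. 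After a further autoequivalence of $\rD^b(\srX')$ fixing $(0,0,1)$ up to sign (a line-bundle twist and possibly $[1]$) we may assume $L'$ is effective and nef. By the Bayer--Macrì minimal model program for Bridgeland moduli spaces \cite{BM14}, $M_{\sigma'}(\srX',\bfv')$ is birational to the Gieseker moduli space $M_H(\srX',\bfv')$ of $H$-stable $\srX'$-twisted torsion sheaves for a $\bfv'$-generic ample $H$. By the construction of Section~\ref{sec:torsion}, the support morphism $M_H(\srX',\bfv')\to|L'|\cong\PP^{n}$ (with $\dim|L'|=(L')^2/2+1=n$) is a Lagrangian fibration, its fibres being compactified Jacobians of the curves in $|L'|$, isotropic of dimension $n$ for the holomorphic symplectic form. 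Composing the isomorphism $M_\sigma(\srX,\bfv)\cong M_{\sigma'}(\srX',\bfv')$ with the birational map to $M_H(\srX',\bfv')$ produces a birational Lagrangian fibration on $M_\sigma(\srX,\bfv)$.

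The two genuinely substantial inputs are invoked here as black boxes, and the first of them is the main obstacle: establishing in the twisted setting that $M_{\sigma_0}(\srX,\bfw)$ is a non-empty (twisted) K3 surface whose universal family induces a derived equivalence. This combines non-emptiness and smoothness for moduli of stable twisted objects, the positivity reduction for $\bfw$ (which needs the autoequivalence group of $\rD^b(\srX)$ to act richly enough on $\widetilde{\rH}(\srX)$), and the theory of twisted Fourier--Mukai kernels and their action on twisted Mukai lattices. The second input is the assertion, again due to Bayer--Macrì, that Bridgeland moduli spaces for a fixed primitive Mukai vector and different generic stability conditions are birational; this rests on their full analysis of walls in $\mathrm{Stab}^\dagger(\srX')$, in particular the treatment of totally semistable walls via the minimal-norm class in the rank-two hyperbolic sublattice spanned by $\bfv'$ and a destabilising class. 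Granting these, the only remaining work is the bookkeeping needed to keep $\sigma_0$, $\sigma'$ and $H$ simultaneously generic for all the Mukai vectors involved.
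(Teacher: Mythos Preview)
The paper does not supply its own proof of this statement: Theorem~\ref{thm:BM} is quoted verbatim as \cite[Theorem~1.1, Theorem~1.5]{BM14} and used as a black box throughout (notably in Proposition~\ref{prop:existence} and Proposition~\ref{prop:BV}). So there is nothing in the paper to compare your argument against.

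That said, your sketch is a faithful outline of the Bayer--Macr\`i strategy: reduce via a Fourier--Mukai equivalence associated to the isotropic class $\bfw$ to a rank-zero Mukai vector on a Fourier--Mukai partner, then use wall-crossing to reach a Gieseker moduli space of pure one-dimensional sheaves where the support map furnishes the Lagrangian fibration. You have correctly isolated the two heavy inputs (existence of the derived equivalence from the isotropic fine moduli space in the twisted setting, and the birationality of Bridgeland moduli spaces across generic chambers) and flagged them as the substantive content. One small point: your parenthetical handling of the case $\bfv^2=0$ is incomplete --- you assume $\bfw\notin\QQ\bfv$ is ``automatic when $\bfv^2>0$'' but do not say what happens when $\bfv^2=0$ and $\bfw$ is proportional to $\bfv$; in that degenerate situation the hypothesis is vacuously satisfied yet your Fourier--Mukai reduction does not go through as written. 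In the applications within the present paper this edge case does not arise, but if you intend your sketch to cover the full statement you should either exclude $\bfv^2=0$ or treat it separately.
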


\section{Cycles on HK with birational Lagrangian fibrations}\label{sec:torsion}

 In this section, we investigate the zero cycles on  moduli space of twisted torsion sheaves. 
\subsection{Density of constant cycle Lagrangian subvarieties}
Let us recall some general results for cycles on hyper-K\"ahler varieties with a Lagrangian fibration.  The first result is a strengthening of \cite[Theorem 1.3]{Lin18}
. \begin{proposition}\label{prop:dense-lag}
    If a hyper-K\"ahler variety $M$ admits a birational Lagrangian fibration, then there is a dense collection of constant cycle Lagrangian subvarieties. Moreover, points on those constant cycle Lagrangian subvarieties represent the same class.
 \end{proposition}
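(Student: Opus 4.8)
The plan is to reduce to an honest Lagrangian fibration and then realise the subvarieties as preimages of torsion multisections under a finite ``fibrewise multiplication'' morphism to the relative Albanese scheme; the mechanism that forces all of them to carry one and the same class is that translation by a torsion point acts trivially on the Chow group of zero-cycles of an abelian variety (with rational coefficients, as is our convention). So first I would pass from the given birational Lagrangian fibration to an honest one: let $f\colon M\dashrightarrow M'$ be a birational map onto a hyper-K\"ahler variety $M'$ with an honest Lagrangian fibration $\pi\colon M'\to B$. Being a birational map of hyper-K\"ahler varieties, $f$ is an isomorphism in codimension one, so its indeterminacy locus has codimension $\geq 2$; it preserves the holomorphic symplectic form where defined, and induces an isomorphism $\CH_0(M)\cong\CH_0(M')$ matching the classes of points in the common open subset on which $f$ is an isomorphism. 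Hence a constant cycle Lagrangian subvariety of $M'$ not contained in the indeterminacy locus of $f^{-1}$ transports, via the closure of its preimage, to one of $M$ representing the same class, and a dense collection of such subvarieties transports to a dense collection. Thus I may assume $M=M'$ carries an honest Lagrangian fibration $\pi\colon M\to B$ with $\dim M=2n$.

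Next, recall the subvarieties underlying \cite[Theorem 1.3]{Lin18}. Over the open locus $B^\circ\subseteq B$ on which $\pi$ is a smooth proper abelian fibration, $M^\circ:=\pi^{-1}(B^\circ)$ is a torsor under the relative Albanese scheme $\mathcal A\to B^\circ$. Projectivity of $M$ provides a multisection of $\pi$ of some degree $d$, and from it one constructs, after shrinking $B^\circ$, a finite flat surjective morphism $\psi\colon M^\circ\to\mathcal A$ over $B^\circ$ which on each fibre $M_b$ is an isogeny composed with a translation; in particular $\psi$ is \'etale with torsion kernel on fibres, and sends cosets of torsion subgroups to cosets of torsion subgroups. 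For each connected component $T$ of a torsion subgroup scheme $\mathcal A[N]$ --- an $N$-torsion multisection $T\to B^\circ$ --- set $Z_T:=\overline{\psi^{-1}(T)}\subseteq M$. Then $Z_T\to B$ is surjective and generically finite with $\dim Z_T=n$, and each $Z_T$ is a constant cycle Lagrangian subvariety by \cite[Theorem 1.3]{Lin18} (alternatively: it is a constant cycle subvariety of half the dimension, hence isotropic, hence Lagrangian). Since $\bigcup_{N,T}T$ is Zariski dense in $\mathcal A$ (torsion points of an abelian variety over $\CC$ being dense) and $\psi$ is finite surjective, $\bigcup_{N,T}Z_T$ is Zariski dense in $M$; this gives the first assertion of the proposition.

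It remains to see that all the $Z_T$ represent the same class, which is the new content. The key is a fibrewise computation: fix $b\in B^\circ$; if $x,x'\in M_b$ are such that $\psi(x)$ and $\psi(x')$ differ by a torsion element of $\mathcal A_b$, then, since $\psi|_{M_b}$ is an isogeny up to translation, $x$ and $x'$ themselves differ by a torsion element of $\mathcal A_b$, so $[x]=[x']$ in $\CH_0(M_b)$ (translation by a torsion point acts trivially on $\CH_0$ of the torsor $M_b$, by the corresponding statement for abelian varieties), and pushing forward along the closed immersion $M_b\hookrightarrow M$ gives $[x]=[x']$ in $\CH_0(M)$. Now take any two torsion multisections $T,T'$; since $Z_T,Z_{T'}\to B$ are surjective and $b\in B^\circ$, we may pick $x\in Z_T\cap M_b$ and $x'\in Z_{T'}\cap M_b$, and by construction $\psi(x)\in T_b$ and $\psi(x')\in T'_b$ are torsion, so the above applies and $[x]=[x']$ in $\CH_0(M)$. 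As each $Z_T$ is a constant cycle subvariety it represents a single class $c_T\in\CH_0(M)$, and the displayed equality forces $c_T=c_{T'}$ for all $T,T'$. Hence every $Z_T$ represents one common class; transporting back through $f$ concludes the proof.

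The step I expect to be the main obstacle is the reduction to an honest Lagrangian fibration --- one must check that ``being a constant cycle Lagrangian subvariety'' and ``representing a fixed class'' are unaffected by the codimension $\geq 2$ indeterminacy of $f$, and that a dense sub-collection avoids that locus --- together with extracting from \cite[Theorem 1.3]{Lin18} the precise shape of its subvarieties (that they are, up to the above, the preimages $Z_T$). If one prefers to reprove the constant cycle property rather than invoke \cite{Lin18}, the delicate point instead becomes the independence of the common fibrewise class on $b\in B^\circ$, which one obtains from $\CH_0(B)=\QQ$ (for instance $B\cong\PP^n$) by pulling a rational equivalence on $B$ back to $Z_T$ and evaluating on fibres.
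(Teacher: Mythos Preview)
Your proof is correct and follows essentially the same approach as the paper's. Both arguments reduce to an honest Lagrangian fibration, invoke \cite[Theorem~1.3]{Lin18} for one constant cycle Lagrangian multisection, produce a dense family by taking torsion translates, and conclude the ``same class'' assertion from the fact that translation by a torsion point acts trivially on $\CH_0$ with rational coefficients. The only cosmetic difference is packaging: the paper works at the generic fibre $(M_{\bar\eta},0_\Sigma)$ and spreads out torsion points, whereas you work relatively via the summation map $\psi\colon M^\circ\to\mathcal A$ and pull back torsion multisections; your fibrewise justification of the common class is a more explicit unpacking of the paper's terse ``$\CH_0(M)$ is torsion-free''.
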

 
 \begin{proof}

As the  property is preserved under birational transformations, 
 we are reduced to considering the case that $M$ admits a Lagrangian fibration $M\to \PP^n$. If $T\to \PP^n$ is a morphism, we set $$M_T:=M\times_{\PP^n} T.$$  Let $U\subseteq \PP^n$ be the open locus parametrizing all the smooth fibers of $M\to \PP^n$. According to  \cite[Theorem 1.3]{Lin18}, there is a  constant cycle Lagrangian subvariety $\Sigma\subseteq M$ such that the restriction $\Sigma_U=\Sigma\cap M_U\to U$ is finite. This gives a map 
 \begin{equation}\label{eq:mult-pt}
     \Spec(\CC(\Sigma))\to\Spec(\CC(\PP^n))\to \PP^n. 
 \end{equation}  

 Let $\eta$ be the generic point of $\PP^n$ and let $M_\eta$ be the generic fiber of $M\to \PP^n$.  The multisection $\Sigma_U$ defines a closed point $0_\Sigma\in M_{\eta}(\CC(\Sigma))$ via \eqref{eq:mult-pt}.  Now we may consider the torsion points on the abelian variety  $(M_{\bar{\eta}}, 0_\Sigma)$.  For each geometrically closed torsion point $p\in M_{\bar{\eta}}$, it can be spread out to be a multisection on the abelian fibration $M_U\to U$. Its Zariski closure $\cZ_p\subseteq M$ is  a constant cycle Lagrangian subvariety on $M$. As the geometrically closed torsion points of $M_{\bar{\eta}}$  are dense and $\CH_0(M)$ is torsion-free, 
the assertion follows. 
 \end{proof}

 \begin{remark}
 From the construction, one can see that all these constant cycle Lagrangian subvarieties are contained in a single  orbit $O_x$ with $x\in \Sigma$.     
 \end{remark}
\subsection{Twisted torsion sheaves with maximal dimensional rational equivalent orbit} 

 Let $\bfv=(0,L,s) \in \widetilde{\rH}(X)$ be a primitive vector with $L\in \Pic(X)$ effective. Let $H$ be $v$-generic. The moduli space $M_H(\srX,\bfv)$ parameterizes torsion sheaves of pure dimension $1$.
 Their schematic support are  curves in $|L|$. Moreover, if the support curve $\srC\to C$ of $\cE\in M_H(\srX,\bfv)$ is integral, then $\rank(\cE|_\srC)=1$ since $c_1(\cE)=[C]$.    
 
  There is a Lagrangian fibration 
  \begin{equation}
    \pi_\srX:  M_H(\srX,\bfv)\to |L|.
  \end{equation}
by sending $\cE$ to its  schematic support. 

\begin{proposition}\label{prop:tor-fil}
    For $\cE\in M_H(\srX,\bfv)$, we have $c_2(\cE)\in \bS_{\rd(\cE)}(X)$. 
\end{proposition}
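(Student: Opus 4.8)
The plan is to combine the Lagrangian fibration structure on $M_H(\srX,\bfv)$ with the computation of $c_2$ of a pushforward of a twisted sheaf from its support curve (Example \ref{ex:torsion}), and then invoke the density result of Proposition \ref{prop:dense-lag} to pin down the rational equivalence class. First I would recall from Example \ref{ex:torsion} that if $\cE\in M_H(\srX,\bfv)$ has integral schematic support $\srC\to C$ with $C\in|L|$, and $\cE=\Phi_\ast\cL$ for a $1$-fold $\srC$-twisted sheaf $\cL$ of rank $1$, then $c_2(\cE)=[C]^2-\phi_\ast c_1(\cL)\in\CH_0(X)$. The key point is that $[C]^2=[C]\cdot[C]$ is (a multiple of) the Beauville--Voisin class $\fro_X$ since $L^2$ lies in the image of the intersection pairing $\Pic(X)\times\Pic(X)\to\CH_0(X)$, which is $\ZZ\cdot\fro_X$ by \cite{BV04}; so $c_2(\cE)\in \ZZ\fro_X + \phi_\ast\CH_0(C)$, and after twisting by powers of $\cO(C)$ one sees $c_2(\cE)\in\bS_i(X)$ for $i$ roughly the arithmetic genus $g$ of $C$, i.e.\ $i=\tfrac12 L^2+1=\tfrac12\bfv^2+1$.

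That crude bound is the degree of the generic fiber; the real content is to do better, namely to replace $g$ by $\rd(\cE)=\tfrac12\dim\Ext^1(\cE,\cE)$, and here the Lagrangian fibration is essential. I would argue as follows. Since $M=M_H(\srX,\bfv)$ is hyper-K\"ahler of dimension $\bfv^2+2=2g$ carrying the Lagrangian fibration $\pi_\srX\colon M\to|L|\cong\PP^g$, Proposition \ref{prop:dense-lag} gives a dense family of constant cycle Lagrangian subvarieties, all contained in a single orbit $O_{x_0}$, whose common class I will call $o_M\in\CH_0(M)$. Translating through the Marian--Zhao theorem (Theorem \ref{thm:MZ}), $o_M$ corresponds to a fixed class $c_2^{(0)}\in\CH_0(X)$, and $[\cE]=[\cE']$ in $\CH_0(M)$ iff $c_2(\cE)=c_2(\cE')$ in $\CH_0(X)$. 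Thus for $\cE$ lying on one of these constant cycle Lagrangians, $c_2(\cE)=c_2^{(0)}$ is independent of $\cE$, and by choosing $\cE$ supported on an integral curve I can compute $c_2^{(0)}$ explicitly from Example \ref{ex:torsion}: taking $\cL$ a line bundle of appropriate degree on a smooth $C$, $\phi_\ast c_1(\cL)$ is a sum of $\deg$-many points each rationally equivalent to $\fro_X$ (smooth K3 curves being constant-cycle-free is false, but a \emph{point} of $C$ rationally trivial on $X$ is $\fro_X$ — more carefully, $\phi_\ast$ of a point need not be $\fro_X$, so instead I use that $c_2^{(0)}$ has degree $2g-2-\deg\cL$ and is independent of the point chosen, forcing $c_2^{(0)}\in\ZZ\fro_X$, hence $c_2^{(0)}\in\bS_0(X)$ or at worst $\bS_1(X)$). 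The upshot is $c_2(\cE_0)\in\bS_0(X)$ for $\cE_0$ on the dense constant-cycle locus.

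For a general $\cE\in M$, I would then estimate $\rd(\cE)$ against the dimension of its orbit $O_\cE$. By deformation theory $\dim\Ext^1(\cE,\cE)=\bfv^2+2=2g=\dim M$, so that gives nothing directly; the point is rather that the orbit dimension $\dim\cO_\cE$ controls how far $\cE$ is from the constant-cycle locus. Concretely: if $\cE$ is supported on an integral curve $C$, the Jacobian fiber $\pi_\srX^{-1}([C])$ is a torsor under $\mathrm{Pic}^0(\tilde C)$ ($\tilde C$ the normalization-via-twist), the constant cycle Lagrangian meets it in the torsion points, and moving $\cE$ within its fiber changes $c_2(\cE)$ by $\phi_\ast(\text{divisor of degree }0)$, landing in $\bS_{i}(X)$ with $i$ governed by the number of points moved — which is bounded by $\dim\cO_\cE\cap(\text{fiber})$. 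Matching this with $\rd(\cE)$ is exactly the degeneration-locus analysis the paper advertises for Theorem \ref{mainthm3}, so I expect the clean statement to come by applying that machinery fiberwise over $|L|$. \textbf{The main obstacle} is precisely this last step: controlling, for a non-integral or singular support curve, how the class $c_2(\cE)$ ranges over $\bS_\bullet(X)$ as $\cE$ varies in its rational-equivalence orbit, and certifying that the index never exceeds $\rd(\cE)$ — i.e.\ proving the orbit-dimension/$\Ext^1$ inequality $\dim\cO_\cE\ge g-\rd(\cE)$ is sharp enough. I would handle the integral-support case first (where the Jacobian and torsion-point picture is transparent), then reduce the general case to it by a specialization/semicontinuity argument for $c_2$ and for $\dim\Ext^1$, using that the locus of integral-support sheaves is dense in $M$.
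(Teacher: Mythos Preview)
Your first paragraph already contains the entire proof, and everything after it stems from an oversight: you compute the ``crude bound'' $i=\tfrac12 L^2+1=\tfrac12\bfv^2+1$ and then declare that ``the real content is to do better, namely to replace $g$ by $\rd(\cE)$'', but these two numbers coincide. For any stable $\cE\in M_H(\srX,\bfv)$ one has $\dim\Ext^1(\cE,\cE)=\bfv^2+2$ (you yourself write this in your third paragraph), so $\rd(\cE)=\tfrac12(\bfv^2+2)=\tfrac12 L^2+1=g$. There is nothing to improve; the arithmetic-genus bound \emph{is} the $\rd(\cE)$-bound. The paper's proof is precisely your first observation: $c_2(\cE)$ is a zero cycle supported on a curve $C\in|L|$ of genus at most $\tfrac12(L^2+2)=\rd(\cE)$, and then one quotes \cite[Claim~0.2]{OG13} (or \cite[Proposition~15]{Voi15}) to conclude $c_2(\cE)\in\bS_{\rd(\cE)}(X)$.

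Consequently your second and third paragraphs are unnecessary, and parts are circular: you appeal to the orbit-dimension and degenerate-locus machinery behind Theorem~\ref{mainthm3}, but in the paper that result is proved \emph{after} Proposition~\ref{prop:tor-fil} and depends on it (through Corollary~\ref{cor:torsion-BV} and Theorem~\ref{thm:torsion}). Your attempted argument that $c_2^{(0)}\in\bS_0(X)$ is also incomplete, as you yourself flag; but since none of this is needed, the only real fix is to recognise that your first paragraph, together with the equality $g=\rd(\cE)$, is already the complete proof.
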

\begin{proof}
   By Example \ref{ex:torsion},  $c_2(\cE)$ is a zero cycle supported on a curve $C\in |L|$. Since the genus of $C$ is less than or equal to $\frac{1}{2}(L^2+2)=\rd(\cE)$, the assertion follows from \cite[Claim 0.2]{OG13} (see also \cite[Proposition 15]{Voi15}).
\end{proof}

As in \cite{Voi15} and \cite{SYZ20},  set $\dim  M_\sigma(\srX,\bfv)=2n$ and we can define the incidence 
\begin{equation}
    \Gamma':=\Big\{(\xi,\cE)~|~c_2(\cE)=[\xi]+k\fro_X\in \CH_0(X)\Big\}\subset X^{[n]}\times M_\sigma(\srX,\bfv),
\end{equation}
 where $k\in \QQ$ is a constant determined by $\bfv$. It is known that $\Gamma'$ is a countable union of closed subsets of $X^{[n]}\times M_\sigma(\srX,\bfv)$ and   there are two projections
\begin{equation*}
    \pi_1:\Gamma'\to M_\sigma(\srX, \bfv),\quad  \pi_2:\Gamma'\to X^{[n]}.
\end{equation*}
 Due to Proposition \ref{prop:tor-fil}, $\pi_1$ is dominant. By the same argument in \cite[Proposition 1.3]{OG13},  we know that $\pi_2$ is also dominant.
 
 By cutting $\Gamma'$ via  hyperplane sections, we can obtain a  subvariety $\Gamma \subseteq \Gamma'$ such that  $\Gamma$ is generically finite over $M_\sigma(\srX,\bfv)$ and $X^{[n]}$. Then we have 
\begin{equation}
    c_2(\cE)\in\bS_i(X)\Rightarrow \dim \cO_\cE\geq \rd(\cE)-i.
\end{equation} 
 for any $\cE\in M_\sigma(\srX,\bfv)$. This yields

\begin{corollary}\label{cor:torsion-BV}
 Let $\Sigma \subseteq M_\sigma(\srX,\bfv)$ be a  constant cycle  Lagrangian subvariety in Proposition \ref{prop:dense-lag}. Then $c_2(\cE)\in \bS_0(X)$ for any $\cE\in \Sigma$.
\end{corollary}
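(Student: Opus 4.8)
The plan is to reduce the statement to evaluating $c_2$ at a single, well-chosen point of $\Sigma$. Throughout, $M=M_H(\srX,\bfv)$ with its support Lagrangian fibration $\pi_\srX\colon M\to |L|=\PP^n$.

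First I would use that $c_2$ is constant along $\Sigma$. Since $\Sigma$ is a constant cycle subvariety, all of its points represent the same class in $\CH_0(M)$ (indeed, by Proposition \ref{prop:dense-lag} they all lie in a single orbit $O_x$). Hence by the Marian-Zhao Theorem \ref{thm:MZ} there is a fixed class $z_0\in\CH_0(X)$ with $c_2(\cE)=z_0$ for every $\cE\in\Sigma$. Thus it suffices to exhibit one $\cE_0\in\Sigma$ with $c_2(\cE_0)\in\bS_0(X)$, for then $c_2(\cE)=z_0=c_2(\cE_0)\in\bS_0(X)$ for all $\cE\in\Sigma$.

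Next I would locate such an $\cE_0$ over a degenerate member of $|L|$. By its construction in Proposition \ref{prop:dense-lag}, $\Sigma$ is obtained by spreading out torsion multisections over the smooth locus $U\subseteq|L|$, so $\pi_\srX|_\Sigma\colon\Sigma\to|L|=\PP^n$ is dominant; as $\Sigma$ is projective, its image is closed, hence $\pi_\srX|_\Sigma$ is surjective. On the other hand, by the Bogomolov-Mumford theorem the effective class $L$ on the K3 surface $X$ admits a member $C_0\in|L|$ which is a sum of rational curves. Choose $\cE_0\in\Sigma$ with $\pi_\srX(\cE_0)=[C_0]$, i.e. with schematic support $C_0$. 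Finally, by the computation in Example \ref{ex:torsion} (applied exactly as in the proof of Proposition \ref{prop:tor-fil}), $c_2(\cE_0)$ is represented by a zero-cycle supported on $C_0$; since every component of $C_0$ is rational, every point of $C_0$ equals the Beauville-Voisin class $\fro_X$ in $\CH_0(X)$ by \cite{BV04}, so $c_2(\cE_0)$ is an integer multiple of $\fro_X$, i.e. $c_2(\cE_0)\in\bS_0(X)$ (cf. \cite[Claim 0.2]{OG13}). Combined with the first step, this proves the Corollary.

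The point requiring the most care is the second step: one must know that the particular constant cycle Lagrangian $\Sigma$ really dominates, hence (being proper) surjects onto, the base $|L|$, so that a Bogomolov-Mumford member — which typically lives on a positive-codimension, possibly finite, locus of $|L|$ — is genuinely attained by some $\cE_0\in\Sigma$. A minor technical remark is that Example \ref{ex:torsion} is stated for an integral support curve, but the only input used here is that $c_2$ of the pushforward of a twisted sheaf from $\srC_0\to C_0$ is supported on $C_0$, which holds verbatim for reducible $C_0$ by Grothendieck-Riemann-Roch on the gerbe.
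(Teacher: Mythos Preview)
Your argument is correct and takes a genuinely different route from the paper's. The paper transfers the problem to $X^{[n]}$ via the incidence correspondence $\Gamma$ set up just before the corollary: using density of the constant cycle Lagrangians from Proposition~\ref{prop:dense-lag}, it replaces $\Sigma$ by some $\Sigma'$ (carrying the same $c_2$-value, by Theorem~\ref{thm:MZ}) that meets the locus where $\Gamma$ is generically finite, so that $\pi_2(\pi_1^{-1}(\Sigma'))\subseteq X^{[n]}$ is an $n$-dimensional constant cycle subvariety; it then invokes \cite[Theorem~9]{Voi15} on $X^{[n]}$ to conclude that its points have $c_2\in\bS_0(X)$, and pulls this back through $\Gamma$. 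Your approach instead stays on $M$ and exploits the explicit support fibration $\pi_\srX$: you specialise to a point of $\Sigma$ lying over a Bogomolov--Mumford member $C_0\in|L|$ and read off $c_2(\cE_0)\in\ZZ\cdot\fro_X$ directly from the fact that any zero-cycle supported on a union of rational curves represents a multiple of $\fro_X$.

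Your argument is more elementary in that it bypasses both the incidence machinery $\Gamma$ and the appeal to \cite{Voi15}; on the other hand it is tied to the specific support map, hence to the torsion-sheaf setting, whereas the paper's $\Gamma$-based template is precisely what is later generalised in \S\ref{sec:twisted BV} to arbitrary Mukai vectors once $\fro_\srX$ is in hand. One small caveat worth making explicit: the assertion that $|L|$ contains a sum of rational curves for an \emph{arbitrary} effective $L$ (rather than an ample one) goes slightly beyond the classical Bogomolov--Mumford formulation, though it follows by writing a member of $|L|$ as a sum of irreducible curves and applying the result to each summand (or by peeling off $(-2)$-curves to reduce to the nef case).
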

\begin{proof}
  
  According to the construction in the proof of Proposition \ref{prop:dense-lag},  we get a Lagrangian constant cycle subvariety $\Sigma_\srX'$ such that for any $\cE'\in \Sigma_\srX'$, we have $c_2(\cE')=c_2(\cE)$. 

  Moreover, we have the density of such Lagrangian constant cycle subvarieties. Hence we may assume that $\dim \pi_2(\pi_1^{-1}(\Sigma_\srX'))=d(\cE)$. According to Theorem \ref{thm:MZ}, $\pi_2(\pi_1^{-1}(\Sigma_\srX'))$ is a Lagrangian constant cycle subvariety in $X^{[n]}$.

Then for any $\cF\in \pi_2(\pi_1^{-1}(\Sigma_\srX'))$, we have $c_2(\cF)\in \bS_0(X)$ by \cite[Theorem 9]{Voi15}. Therefore $c_2(\cE')\in \bS_0(X)$ by the definition of $\Gamma$ for any $\cE'\in \Sigma_\srX'$.
\end{proof}

\subsection{Correspondence between torsion sheaves}

With notations as above, let us write 
$$M_{\srX}:=M_H(\srX, \bfv) $$
for short.
Assume  $L\in \Pic(X)$  is effective. We denote by $\cC\to |L|$  the complete family of curves in $|L|$  and  $\cC^{\sm} \to |L|^{\sm}$ the family of smooth curves in $|L|$. Let $M^{\sm}_{\srX}=M_\srX\times_{|L|} |L|^{\sm}$. Then $M^{\sm}_{\srX} \to |L|^{\sm}$ is a torsor of the relative Jacobian $\Pic^{0}(\cC^{\sm}/ |L|^{\sm})$ with the action  $M^{\sm}_{\srX}\times_{|L|^{\sm}} \Pic^0(\cC^{\sm}/|L|^{\sm}) \to M^{\sm}_{\srX} $ given by 
\begin{equation*}
 (\cE, L)\mapsto \cE\otimes (i_C)_\ast L
\end{equation*}
where  $\cE\in M_\srX^{\sm}$ with $\mathrm{supp}(\cE)=C$ and $L\in \Pic^0(C)$
(\cf.~\cite[Subsection 3.2]{Huy}).

\begin{theorem}\label{thm:torsion}
   For  $\cE\in M^{\sm}_{\srX}$, we have 
    \begin{equation}c_2(\cE)\in \bS_{i}(X)\Leftrightarrow \dim (O_{\cE}\cap M^{\sm}_{\srX}(\bfv))\geq  \rd(\cE)-i.
    \end{equation}
\end{theorem}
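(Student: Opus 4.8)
Throughout put $n=\rd(\cE)=\tfrac12(\bfv^2+2)$; this is also $\dim|L|$ and the arithmetic genus of the members of $|L|$. The first step is a translation. By Theorem~\ref{thm:MZ}, for $\cF\in M_H(\srX,\bfv)$ one has $[\cF]=[\cE]$ in $\CH_0(M_H(\srX,\bfv))$ if and only if $c_2(\cF)=c_2(\cE)$ in $\CH_0(X)$, so
\[
O_\cE\cap M^{\sm}_\srX=\bigl\{\cF\in M^{\sm}_\srX \ \big|\ c_2(\cF)=c_2(\cE)\bigr\}.
\]
By Example~\ref{ex:torsion}, if $\cF=\Phi_\ast\cL$ is supported on a smooth $C\in|L|^{\sm}$ then $c_2(\cF)=[C]^2-\phi_\ast c_1(\cL)=(L^2)\fro_X-\phi_\ast c_1(\cL)$, the last equality by Beauville--Voisin. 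Hence, putting $\gamma:=(L^2)\fro_X-c_2(\cE)$ and $\ell:=\deg\gamma$, the locus $O_\cE\cap M^{\sm}_\srX$ is identified with the set of pairs $(C,\cL)$, $C\in|L|^{\sm}$ and $\cL$ a twisted line bundle on $\srC$ of the prescribed numerical type, with $\phi_\ast c_1(\cL)=\gamma$; and a degree count shows $c_2(\cE)\in\bS_i(X)$ is equivalent to the assertion that $(\ell+i)\fro_X-\gamma$ is an effective $0$-cycle of degree $i$.

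\textbf{The direction $\Rightarrow$.} Assume $c_2(\cE)=[z]+m\fro_X$ with $z$ effective of degree $i$; by the formalism of \cite{OG13} (or a specialization argument) we may take the support of $z$ to consist of points in general position. If $i=n$ the conclusion is immediate since $\cE\in M^{\sm}_\srX$, so assume $i<n$. The members of $|L|$ through $\mathrm{Supp}(z)$ form a linear subsystem of dimension $\ge n-i\ge 1$, whose general member $C$ is smooth by Bertini, hence lies in $|L|^{\sm}$. Every curve in $|L|$ meets the (countably many, dense) rational curves of $X$, so carries effective divisors supported on points of class $\fro_X$; combining such a divisor with $z|_C$ and correcting by a degree-zero twist, one obtains on each such $C$ a twisted line bundle $\cL_C$ of the prescribed numerical type with $\phi_\ast c_1(\cL_C)=\gamma$ --- legitimate because the twisted line bundles on $\srC$ of a fixed numerical type form a nonempty torsor under $\Pic^0(C)$. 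Then $\Phi_\ast\cL_C\in M^{\sm}_\srX$, it is stable for $\bfv$-generic $H$, and $c_2(\Phi_\ast\cL_C)=c_2(\cE)$. Letting $C$ range over the $(n-i)$-dimensional family and noting that distinct supports give distinct sheaves, we conclude $\dim(O_\cE\cap M^{\sm}_\srX)\ge n-i$.

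\textbf{The direction $\Leftarrow$.} Assume $\dim(O_\cE\cap M^{\sm}_\srX)\ge n-i$; a fortiori $\dim O_\cE\ge n-i$ for the orbit inside all of $M_\sigma(\srX,\bfv)=M_H(\srX,\bfv)$ (for $i=n$ there is nothing to prove by Proposition~\ref{prop:tor-fil}). We transport this to $X^{[n]}$ via the incidence correspondence $\Gamma\subseteq X^{[n]}\times M_\sigma(\srX,\bfv)$ constructed above: $\Gamma$ is generically finite over both factors and satisfies $c_2(\cF)=[\xi]+k\fro_X$ on $(\xi,\cF)\in\Gamma$ for a fixed $k\in\ZZ$ (integral by a degree count). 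Applying Theorem~\ref{thm:MZ} to $X^{[n]}=M_H(X,(1,0,1-n))$ shows that $[\xi']=[\xi]$ in $\CH_0(X^{[n]})$ is equivalent to equality of cycle classes in $\CH_0(X)$; hence $\pi_2\bigl(\pi_1^{-1}(O_\cE)\bigr)$ lies in a single orbit $O_\xi\subseteq X^{[n]}$, and finiteness of $\Gamma$ forces $\dim O_\xi\ge n-i$. The untwisted analogue of the theorem for $X^{[n]}$ --- namely that $\dim O_\xi\ge n-i$ forces the cycle class of $\xi$ in $\CH_0(X)$ to lie in $\bS_i(X)$, generalizing \cite[Theorem~9]{Voi15} (cf.\ \cite{LiZ22}) --- then gives $[\xi]\in\bS_i(X)$, whence $c_2(\cE)=[\xi]+k\fro_X\in\bS_i(X)$ since $\bS_\bullet$ is stable under translation by $\ZZ\fro_X$.

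\textbf{Expected main obstacle.} The delicate part is the direction $\Leftarrow$, and within it the behaviour of $\Gamma$ over the locus where $\pi_1$ (or $\pi_2$) fails to be finite --- precisely the locus of objects with positive-dimensional rational equivalence orbit, i.e.\ the interesting one. Controlling it requires either a choice of $\Gamma$ compatible with the Lagrangian fibration $M^{\sm}_\srX\to|L|^{\sm}$ (so that the $\fro_X$-absorbing divisors of the $\Rightarrow$-step sweep out the expected families and generic finiteness survives the hyperplane cuts defining $\Gamma$), or, alternatively, a direct argument on the abelian scheme $M^{\sm}_\srX\to|L|^{\sm}$ using the structural fact that the connected part of $\ker\bigl(J(C)\xrightarrow{\phi_\ast}\CH_0(X)\bigr)$ is a Prym-type abelian subvariety coming from a factorization of $\phi|_C$ through a lower-genus quotient curve, combined with O'Grady's properties of $\bS_\bullet$ to run a descending induction on $\dim\pi(Z)$ for a maximal component $Z$ of $O_\cE\cap M^{\sm}_\srX$. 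A secondary technicality is the general-position reduction in the $\Rightarrow$-step, needed for the sub-linear-system cut out by $\mathrm{Supp}(z)$ to have smooth general member.
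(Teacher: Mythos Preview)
Your $\Rightarrow$ argument is a direct geometric construction --- essentially the twisted version of the argument in \cite{OG13,Voi15}: build an $(n-i)$-dimensional constant-cycle family inside $M^{\sm}_\srX$ by hand over the sublinear system of curves through $\mathrm{Supp}(z)$. This differs from the paper's approach and is morally correct, though the details you flag (general position of $z$, smoothness of the general member, and especially the algebraic variation of the twisted line bundle $\cL_C$ as $C$ moves) do need care in the twisted setting.

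The real gap is in $\Leftarrow$, and you identify it yourself: the incidence $\Gamma\subset X^{[n]}\times M_\sigma(\srX,\bfv)$ is obtained from $\Gamma'$ by generic hyperplane cuts, so $\pi_1,\pi_2$ are only \emph{generically} finite. The components of $O_\cE\cap M^{\sm}_\srX$ are specific constant-cycle loci, and nothing prevents them from lying entirely in the exceptional locus of $\pi_1$ or from being collapsed by $\pi_2$; the line ``finiteness of $\Gamma$ forces $\dim O_\xi\ge n-i$'' is therefore unjustified. Neither of your two proposed fixes is actually carried out.

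The paper's resolution avoids $X^{[n]}$ and instead compares $M^{\sm}_\srX$ with the \emph{untwisted} torsion moduli $M_X^{\sm}:=M_H(X,\bfv)^{\sm}$ for the same Mukai vector. The point is that both $M^{\sm}_\srX\to|L|^{\sm}$ and $M_X^{\sm}\to|L|^{\sm}$ are torsors under the \emph{same} relative Jacobian $\Pic^0(\cC^{\sm}/|L|^{\sm})$. Choosing constant-cycle Lagrangian multisections $\Sigma_\srX,\Sigma_X$ (Proposition~\ref{prop:dense-lag}) and base-changing along $\Sigma_\srX^{\sm}\times_{|L|^{\sm}}\Sigma_X^{\sm}$ trivializes both torsors; the graph of the resulting fibrewise isomorphism is a subvariety $W$ of
\[
R=\bigl\{(\cE,E)\in M^{\sm}_\srX\times M_X^{\sm}\ :\ c_2(\cE)-c_2(E)\in\bS_0(X)\bigr\}
\]
whose projections to both factors are \emph{finite and surjective}, not merely generically so. (That $W\subset R$ uses Corollary~\ref{cor:torsion-BV}: the multisections have $c_2\in\bS_0(X)$.) With honest finiteness, both implications reduce to the untwisted case of \cite{LiZ22} by transporting constant-cycle subvarieties through $W$ with no loss of dimension. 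Your second proposed fix is pointing in the right direction; the missing idea is that $M^{\sm}_\srX$ and $M_X^{\sm}$ are torsors under a \emph{common} group scheme, so a finite base change identifies them.
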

\begin{proof}
If $\srX\to X$ is essentially trivial, this is established in \cite[Theorem 3.1 \& Theorem 1.1]{LiZ22}. If $\srX\to X$ is not essentially trivial,  we can establish a  finite correspondence between $M_X^{\sm}$ and $M_\srX^{\sm}$ which preserves the two  filtrations.
More precisely, consider the incidence variety
\begin{equation}
    R=\Big\{ (\cE,E)\in M^{\sm}_{\srX}\times M_X^{\sm}| ~c_2(\cE)-c_2(E)\in \bS_0(X)~\Big\},
\end{equation}
which is a countable union of closed subvarieties of $M^{\sm}_{\srX}\times M_X^{\sm}$. Then
\begin{claim}\label{claim}
  There exists a subvariety $W\subseteq R$ such that the projections $\pi_1:W \to M_\srX^{\sm}$ and $\pi_2:W \to M_X^{\sm}$ are finite and surjective.  
\end{claim}

If such $W$ exists,  when $\dim (O_{\cE}\cap M^{\sm}_{\srX})\geq  \rd(\cE)-i=\frac{\bfv^2+2}{2}-i$, we can find a subvariety $$Z\subset O_{\cE}\cap M^{\sm}_{\srX}$$ with  $\dim Z\geq  \rd(\cE)-i$. Then $Z'=\pi_2(\pi_1^{-1}(Z))$ is a constant cycle subvariety by Theorem \ref{thm:MZ}  and we have $\dim Z'=\dim Z \geq \frac{\bfv^2+2}{2}-i$.  This means $\dim O_{E}\geq \frac{\bfv^2+2}{2}-i$ for any $E\in  Z'$. According to the proof of \cite[Theorem 1.1]{LiZ22}, we have  $$c_2(E)\in \bS_i(X) $$ where $E\in Z'$.   From the construction of $R$, we have $c_2(\cE)-c_2(E)\in\bS_0(X)$ which implies $c_2(\cE)\in \bS_i(X)$.

\subsubsection*{Proof of Claim \ref{claim}} Let us explain the construction of $W$.
 Let $\Sigma_{\srX}\subseteq M_{\srX}$ and  $\Sigma_X\subseteq M_X$ be constant cycle Lagrangian subvarieties defined in the proof of Proposition \ref{prop:dense-lag}. Their restrictions to  the smooth locus $M_\srX^{\sm}$ (resp.~$M_X^{\sm}$) are multisections of the abelian fibration of degree $d_1$ (resp. $d_2$). 
 
 Let $\Sigma_{\srX,C}$ and $\Sigma_{X,C}$ be their restriction to the fiber over $C\in |L|^{\sm} $.  Denote by $i_C:C\hookrightarrow X$ the closed embedding. We can define 
 \begin{equation}
    W=\Big\{ (\cE, E)\in M_{\srX}^{\sm}\times M_X^{\sm}|~ (i_C)^\ast(\cE-\frac{1}{d_1}\Sigma_{\srX,C})=(i_C)^\ast( E-\frac{1}{d_2}\Sigma_{X,C}) \in \Pic^0_\QQ(C)\Big\}. 
 \end{equation}
By Example \ref{ex:torsion}, for $(\cE,E)\in W$, we have 
\begin{equation}
    \begin{aligned}
        c_2(\cE)-\frac{1}{d_1}c_2(\Sigma_{\srX,C})=c_2(E)-\frac{1}{d_2}c_2(\Sigma_{X,C}) \in \CH_0(X). 
    \end{aligned}
\end{equation} 
Here, $c_2(\Sigma_{\srX,C})=\sum\limits_{\cF\in \Sigma_{\srX,C}} c_2(\cF)$ and $c_2(\Sigma_{X,C})=\sum\limits_{F\in \Sigma_{X,C}} c_2(F)$. 

By Proposition \ref{cor:torsion-BV}, for every element  $\cF\in \Sigma_{\srX,C}$, we have $c_2(\cF)\in \bS_0(X)$. Thus $$\frac{1}{d_1} c_2(\Sigma_{\srX,C})=\frac{1}{d_2}c_2(\Sigma_{\srX,C})\in \bS_0(X).$$
Here, the equality holds because they have the same degree.   This forces  $c_2(\cE)=c_2(E)$.  Moreover, one can easily check that the projections are finite and surjective. Indeed, $W$ is obtained by taking the graph of the natural isomorphism between the two torsors $M_{\srX}^{\sm} \to |L|^{\sm}$ and $M_{X}^{\sm} \to |L|^{\sm}$ after the finite base change $$\Sigma_{\srX}^{{\sm}}\times_{|L|^{\sm}} \Sigma_X^{{\sm}} \to |L|^{{\sm}}.$$  
This completes the proof.

\end{proof}

\section{Twisted Beauville-Voisin class}\label{sec:twisted BV}

In this section, we construct the twisted Beauville-Voisin class on $\srX$. 
\subsection{Construction of $\fro_{\srX}$}Throughout this section, all the Mukai vectors are assumed to be primitive and stability conditions are generic. 
\begin{definition}\label{twisted:o}
Let $N=M_\sigma(\srX,\bfw)$ be a  Bridgeland moduli space and let $N^{\rm l.f.}\subseteq N$ be the open subset which consists of $\srX$-twisted locally free sheaves. Let $\cF\in N^{\rm l.f.}$ be a point lying on a constant cycle Lagrangian  subvariety $Z$. We define 
\begin{equation}
     \fro_{\srX}=\frac{c_2(\cF)}{\rank \cF}+(1-\frac{\deg(c_2(\cF))}{\rank \cF})\fro_X,
\end{equation}
which is a zero cycle of degree $1$. 
\end{definition}

\begin{remark}\label{rmk:tensor}
    The construction of $\fro_\srX$ is clearly invariant under the action of tensoring line bundles. This is because $c_2(\cF)-c_2(\cF\otimes L)\in \bS_0(X)$. 
\end{remark}

The existence of such $N$ and $\cF$ in ensured by the following result. 

\begin{proposition}\label{prop:existence}
 Let $\bfv\in\widetilde{\rH}(\srX)$ and  assume $\bfv^\perp\subseteq \widetilde{\rH}_{\rm alg}(\srX)$ contains an isotropic vector. Assume that $\rank(\bfv)=r>\frac{\bfv^2+2}{2}$ and there exists a slope stable element in $M_H(\srX,\bfv)$.  Then  $M_H(\srX,\bfv)$ contains a constant cycle Lagrangian subvariety $Z$ such that $Z\cap M_H^{\rm l.f.}(\srX,\bfv)\neq \emptyset$. 
\end{proposition}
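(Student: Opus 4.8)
The plan is to produce the constant cycle Lagrangian subvariety $Z$ using the birational Lagrangian fibration whose existence is guaranteed by Theorem \ref{thm:BM}, applied to $M_H(\srX,\bfv)$ via the hypothesis that $\bfv^\perp$ contains an isotropic class. By Proposition \ref{prop:dense-lag} this fibration carries a dense collection of constant cycle Lagrangian subvarieties, all lying in a single rational-equivalence orbit $O_x$. So the real content is to verify that at least one of these Lagrangians meets the locally free locus $M_H^{\rm l.f.}(\srX,\bfv)$, which is open and, by the slope-stability hypothesis, nonempty. Since $M^{\rm l.f.}$ is a \emph{dense} open subset (the non-locally-free locus is a proper closed subset once a slope-stable locally free sheaf exists — here one uses $r > \frac{\bfv^2+2}{2} = \rd(\bfv)$, which forces the Mukai vector to be of "large rank" type so that the generic stable object is a locally free sheaf), any \emph{dense} family of subvarieties must have a member meeting it. The first step is therefore to pin down why $M^{\rm l.f.}$ is dense: for a $\bfv$-generic polarization $H$ with $r$ large relative to $\bfv^2$, a general stable sheaf of Mukai vector $\bfv$ is locally free — this is a standard deformation/elementary-modification argument (a non-locally-free torsion-free sheaf $\cE$ sits in $0\to\cE\to\cE^{\vee\vee}\to Q\to 0$ with $Q$ of dimension $\le 1$, and such $\cE$ move in a family of strictly smaller dimension than $M_H(\srX,\bfv)$, whose dimension is $\bfv^2+2$; the bound $r>\frac{\bfv^2+2}{2}$ ensures there is room).

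Next I would run the density argument of Proposition \ref{prop:dense-lag} carefully in this setting. One fixes the birational Lagrangian fibration $\pi: M \dashrightarrow \PP^n$ (with $n = \frac{\bfv^2+2}{2}$), resolves to an honest fibration on a birational model, and produces — after Lin's multisection theorem — a constant cycle Lagrangian $\Sigma$ finite over the locus $U\subset\PP^n$ of smooth fibers, together with the torsion-translates $\cZ_p$ of $\Sigma$ for torsion points $p$ of the generic fiber $M_{\bar\eta}$. These $\cZ_p$ are dense in $M$ because the torsion points are dense in $M_{\bar\eta}$ and $\pi$ is dominant. The key step is then: since $M^{\rm l.f.}\subseteq M$ is nonempty open, and the $\cZ_p$ (a fortiori their union) are Zariski dense, there is a $p$ with $\cZ_p\cap M^{\rm l.f.}\neq\emptyset$; set $Z=\cZ_p$. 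One must also check $\cZ_p$ is genuinely Lagrangian (dimension $n$ and isotropic) — this is exactly what Lin's construction yields for $\Sigma$, and torsion translation on the abelian fibration is an isomorphism preserving the symplectic form up to the relevant scalar, hence preserves the Lagrangian property; and $\cZ_p$ is a constant cycle subvariety by Proposition \ref{prop:dense-lag} (all points of $O_x$).

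A subtlety to address: the birational Lagrangian fibration may not be a morphism on all of $M$, so "$Z$ meets $M^{\rm l.f.}$" must be arranged on a common open locus where the birational map is an isomorphism. This is handled by noting that the birational modification is an isomorphism away from codimension-$\ge 2$ loci (HK birational maps are isomorphisms in codimension $1$), so the smooth open dense locus over which everything is defined still meets the dense open $M^{\rm l.f.}$; concretely, transport the dense family of constant cycle Lagrangians to $M$ via the birational map and intersect with $M^{\rm l.f.}$.

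\medskip

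\noindent\textbf{Main obstacle.} I expect the density of $M^{\rm l.f.}$ — i.e., that the general $\sigma$-stable (or $H$-Gieseker-stable) twisted sheaf of Mukai vector $\bfv$ with $r > \frac{\bfv^2+2}{2}$ is locally free — to be the step needing the most care, since it requires a dimension count for the locus of non-reflexive (equivalently, on a surface, non-locally-free) twisted torsion-free sheaves and genuinely uses the rank inequality in the hypothesis; the Lagrangian/density part is then a fairly direct invocation of Proposition \ref{prop:dense-lag} and Theorem \ref{thm:BM}.
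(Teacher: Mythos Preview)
Your proposal is correct and follows essentially the same route as the paper: invoke Theorem \ref{thm:BM} via the isotropic class in $\bfv^\perp$ to obtain a birational Lagrangian fibration, apply Proposition \ref{prop:dense-lag} to produce a dense family of constant cycle Lagrangian subvarieties, and conclude that one of them meets the nonempty open locus $M_H^{\rm l.f.}(\srX,\bfv)$.

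The one point worth noting is your ``main obstacle''. You plan an elementary-modification dimension count to show the non-locally-free locus is a proper closed subset. The paper handles this more sharply: under the hypothesis $r > \frac{\bfv^2+2}{2}$, \emph{every} $\mu_H$-stable twisted sheaf is automatically locally free (this is a known fact, cited from Yoshioka and Huybrechts). Combined with the assumed existence of a slope stable element in $M_H(\srX,\bfv)$, this gives $M_H^{\rm l.f.}(\srX,\bfv)\neq\emptyset$ in one line, and nonemptiness (not density) of this open set is all that is needed for a member of a Zariski-dense family of subvarieties to meet it. So the rank inequality is used more directly than you anticipated, and your dimension count, while valid in spirit, is unnecessary.
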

\begin{proof}
By \eqref{eq:FM}, we know that $\NS(M_H(\srX,\bfv))\cong \bfv^\perp$ contains an isotropic element.   Then $M_H(\srX,\bfv)$ admits a birational Lagrangian fibration by Theorem \ref{thm:BM}.  By Proposition \ref{prop:dense-lag}, there is a dense collection of constant cycle Lagrangian subvarieties on $M_H(\srX,\bfv)$. So it suffices to show  $M_H^{\rm l.f.}(\srX,\bfv)\neq \emptyset$.

Note that under the condition $r>\frac{\bfv^2+2}{2}$, all $\mu_H$-stable $\srX$-twisted sheaves are locally free (\cf.~\cite[Remark 3.2]{Yos01} and \cite[Remark 6.1.9]{Huy}) and the assertion follows from our assumption.

\end{proof}

Slope stable elements can be readily obtained. 
Let $r_0$ be the minimal rank of $\srX$-twisted sheaves in $\mathrm{Coh}^{(1)}(\srX)$. Then $r_0$ equals the order of $[\srX]$ in $\Br(X)$, and every element in $\mathrm{Coh}^{(1)}(\srX)$ has rank $a r_0$ for some integer $a \in \ZZ$ (cf.~\cite[Lemma 3.2]{yoshioka2006moduli}). 
For any primitive vector 
\[
\bfv = (a r_0, h, s) \in \widetilde{\rH}(\srX)
\] 
with $a > 0$ and $\gcd(a, H \cdot h) = 1$, the sheaves $\cE \in M_H(\srX, \bfv)$ are $\mu_H$-stable. When $\mathrm{rank}(\bfv) > \frac{\bfv^2 + 2}{2}$, we know that every element in $ M_H(\srX, \bfv)$ is locally free.

To ensure the existence of $\bfv$ with isotropic vector in its orthogonal complement, set $s = 0$. Then $(1,0,0) \in \bfv^\perp$ is isotropic.

\begin{remark}
    There is a more natural perspective on the construction of $\fro_\srX$: it can be obtained via the second Chern class of the Beauville-Voisin class on a Fourier-Mukai partner. This perspective applies specifically when $N = M_H(\srX, \bfw)$ is a K3 surface, in which case the local freeness requirement for $\cF$ (lying on a constant cycle Lagrangian subvariety) becomes unnecessary.
    
    Let $\cF \in N$ be any point on a constant cycle curve $C \subseteq N$. In this case, $\bS_0^{\mathrm{BV}}\CH_0(N)$ is known to be $1$-dimensional and hence equal to $\bS_0^{\mathrm{SYZ}}\CH_0(N)$. By Theorem \ref{mainthm}, for any $\cF \in C \subset N$, we have:
    \[
    c_2(\cF) - \rank(\cF)\fro_\srX = (\deg(c_2(\cF)) - \rank(\cF))\fro_X.
    \]
    This implies:
    \begin{equation}\label{eq:def-twistbv}
        \fro_\srX = \frac{c_2(\cF)}{\rank \cF} + \left(1 - \frac{\deg(c_2(\cF))}{\rank \cF}\right) \fro_X
    \end{equation}
    provided $\mathrm{rank}(\bfw) > 0$. For $\dim N > 2$, it remains open whether  the equation \eqref{eq:def-twistbv} holds for arbitrary $\cF$ lying on a constant cycle Lagrangian subvariety.
\end{remark}

\begin{remark}[Descent property]\label{rmk:descent}
  We speculate that  $\fro_\srX=\fro_X$. The main evidence is that  due to our construction, the twisted BV class $\fro_\srX$ has nice descent property. This follows from the descent property of constant cycle curves on K3 surfaces (\cf.~\cite[Proposition 3.6]{Huy14}).
  
 To be precise, suppose $X$ is defined over $\bar{k}$. Take a Mukai vector $\bfv=(r,h,s)$ such that $\bfv^2=0$ and $r> 0$. Then $M=M_H(\srX,\bfv)$ is a K3 surface defined over $\bar{k}$ and it consists of Gieseker stable $\srX$-twisted locally free sheaves.    Take a $\bar{k}$-rational curve  $C\subseteq M$ and let $\cF$ be a point on $C$. Then $\fro_{\srX}=\frac{c_2(\cF)}{r}+(1-\frac{\deg(c_2(\cF))}{r})\fro_X$ is clearly defined over $\bar{k}$. 
  
  When $\bar{k}=\bar{\QQ}$,  the Bloch-Beilinson conjecture predicts  $\CH_0(X_{\bar{\QQ}} )=\ZZ\fro_X$ and hence $\fro_\srX=\fro_X$.  
\end{remark}

In the rest, we will show that $\fro_{\srX}$ is independent of the choice of the moduli space $N$ and the constant cycle Lagrangian subvariety $Z$. The key result is 

\begin{proposition}\label{prop:main}
Let $\srM_i:=\srM_{\sigma_i}(\srX,\bfv_i),~i=1,\ldots,k $ and $\srN_j:=\srM_{\tau_j}(\srX,\bfw_j),~j=1,\ldots, \ell$  be  Bridgeland moduli stacks. Set $M_i$ (resp. $N_j$) to be the corresponding coarse moduli space.  Assume that  
\begin{enumerate}
\item [(i)] $M_i^{\rm l.f.}$ and $N_j^{\rm l.f.}$ are non-empty;
\item [(ii)] $\cE_{i}^\vee \otimes \cF_{j}$ is globally generated and
     \begin{equation}\label{eq:vanishing1}
         \rH^{n}(X,\cE_{i}^\vee \otimes \cF_{j})=0,\quad \forall ~n=1,2,
     \end{equation}
     for any $\cE_i\in M_i$ and $\cF_j\in N_j $;
\item [(iii)] for $\cG \in M_i$ (or~$N_i$) and $\cG'\in M_{i'}$ (resp.~$N_{i'}$),  we have 
    \begin{equation}\label{eq:vanishing2}
        \rH^1(\cG^\vee\otimes \cG')=0,\quad\forall~ i\neq i'
    \end{equation}and 
    \begin{equation}\label{eq:vanishing3}
        \rH^0(\cG^\vee\otimes \cG')=0 ,\quad\forall~ i> i'
    \end{equation}
\end{enumerate}
Let $\cE=\bigoplus\limits_{i=1}^k\cE_i$ and $\cF=\bigoplus\limits_{j=1}^\ell \cF_j$ with $\cE_i\in M_i^{\rm l.f.} $ and $\cF_j\in N_j^{\rm l.f.}$. If $\rank \cE=\rank \cF$, then
    \begin{equation}
     c_2(\cE)-c_2(\cF)\in \bS_{d}(X). 
    \end{equation}
    where $d=\sum\limits_{i=1}^k (\rd(\cE_i)-n_i)+\sum\limits_{j=1}^\ell (\rd(\cF_j)-m_j)$, $\dim O_{\cE_i}\cap M_i^{\rm l.f.} =n_i$ and $\dim O_{\cF_j}\cap N_j^{\rm l.f.} =m_j$.

\end{proposition}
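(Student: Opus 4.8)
The plan is to replace the pair $(\cE,\cF)$ by a single stable \emph{torsion} sheaf via a degeneracy-locus construction, and then feed the output into the analysis of Section~\ref{sec:torsion}.

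\textbf{Step 1 (a general morphism and its cokernel).} Using (ii), choose a general $\phi\in\Hom(\cE,\cF)$. Since $\cE^{\vee}\otimes\cF=\bigoplus_{i,j}\cE_i^{\vee}\otimes\cF_j$ is globally generated, $\phi$ is generically an isomorphism, and by Thom--Porteous/Bertini transversality, for general $\phi$ the degeneracy locus $D=(\det\phi)_0\subset X$ is a smooth integral curve in $|\det\cE^{\vee}\otimes\det\cF|$ and $Q\coloneqq\coker\phi$ is a line bundle on $D$. Thus $Q$ is a Gieseker-stable $1$-fold $\srX$-twisted torsion sheaf with Mukai vector $\bfv(Q)=\bfv(\cF)-\bfv(\cE)$ of rank $0$ and $c_1(Q)=\delta\coloneqq c_1(\cF)-c_1(\cE)$ effective, lying in the smooth-support locus of the Lagrangian fibration $M_H(\srX,\bfv(Q))\to|\delta|$ of Section~\ref{sec:torsion} (if $\bfv(Q)$ is imprimitive, replace this space by the relative Jacobian over $|\delta|$, which coincides with it over the smooth locus). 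From $0\to\cE\xrightarrow{\phi}\cF\to Q\to0$ we obtain $c_2(\cF)-c_2(\cE)=c_1(\cE)\cdot c_1(Q)+c_2(Q)$, and $c_1(\cE)\cdot c_1(Q)$ is a product of two divisor classes on a K3 surface, hence lies in $\bS_0(X)$ by \cite{BV04}. Since $\bS_d(X)$ is a subgroup containing $\bS_0(X)$, it suffices to prove $c_2(Q)\in\bS_d(X)$.

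\textbf{Step 2 (moving in a family).} Choose constant cycle subvarieties $Z_i\subseteq M_i^{\rm l.f.}$ through $\cE_i$ with $\dim Z_i=n_i$, and $Z_j'\subseteq N_j^{\rm l.f.}$ through $\cF_j$ with $\dim Z_j'=m_j$; set $\cP=\prod_iZ_i\times\prod_jZ_j'$. By (ii) the relative $\Hom$ of the universal families is locally free of rank $\chi(\cE,\cF)$ over $\cP$; let $\mathcal T$ be the locally closed subvariety of its total space where $\phi$ is generically an isomorphism with $D$ smooth and integral, so $\dim\mathcal T=\sum_in_i+\sum_jm_j+\chi(\cE,\cF)$, and let $c\colon\mathcal T\to M_H^{\sm}(\srX,\bfv(Q))$, $(\cE',\cF',\phi)\mapsto\coker\phi$, be the classifying morphism. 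Since each $Z_i$ (resp.\ $Z_j'$) is a constant cycle subvariety, Theorem~\ref{thm:MZ} applied to $M_i$ (resp.\ $N_j$) gives that $c_2(\cE_i)$ (resp.\ $c_2(\cF_j)$) is constant on $\cP$; the divisor classes $c_1(\cE),c_1(Q)$ being fixed, the formula of Step~1 shows that $c_2(Q)$ is constant on all of $\mathcal T$. Hence, by Theorem~\ref{thm:MZ} applied to $M_H(\srX,\bfv(Q))$, the image $c(\mathcal T)$ lies in a single orbit $O_Q$, so $\dim\bigl(O_Q\cap M_H^{\sm}(\srX,\bfv(Q))\bigr)\ge\dim c(\mathcal T)$; by Theorem~\ref{thm:torsion} it remains to prove $\dim c(\mathcal T)\ge\rd(Q)-d$.

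\textbf{Step 3 (the dimension count, and the main obstacle).} Hypothesis (iii) forces $\Ext^1(\cE_i,\cE_{i'})=0$ for $i\ne i'$, hence $\rd(\cE)=\sum_i\rd(\cE_i)$, and likewise $\rd(\cF)=\sum_j\rd(\cF_j)$. Since $Q$ is stable, $\rd(Q)=\tfrac12\bfv(Q)^2+1$; expanding $\bfv(Q)^2=\bfv(\cF)^2+\bfv(\cE)^2+2\chi(\cE,\cF)$ and using $\tfrac12\bfv(\cE)^2=\rd(\cE)-\hom(\cE,\cE)$ (Serre duality on $X$) together with its analogue for $\cF$ yields
\[
\rd(Q)=\bigl(\rd(\cE)-\hom(\cE,\cE)\bigr)+\bigl(\rd(\cF)-\hom(\cF,\cF)\bigr)+\chi(\cE,\cF)+1 .
\]
Moreover $\mathrm{Aut}(\cE)\times\mathrm{Aut}(\cF)$ acts on $\Hom(\cE,\cF)$ by $(\alpha,\beta)\cdot\phi=\beta\phi\alpha^{-1}$, homomorphisms in one orbit have isomorphic cokernels, and for general $\phi$ (all blocks $\phi_{ij}\ne0$, by (ii)) the stabilizer consists only of the scalars, so the generic orbit has dimension $\hom(\cE,\cE)+\hom(\cF,\cF)-1$. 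The crucial point --- and the main obstacle --- is to show that the general fibre of $c$ is \emph{exactly} such an orbit; equivalently, that the extension $0\to\cE\to\cF\to Q\to0$ with $\cE=\bigoplus_i\cE_i$, $\cE_i\in Z_i$, and $\cF=\bigoplus_j\cF_j$, $\cF_j\in Z_j'$, is rigid modulo $\mathrm{Aut}(\cE)\times\mathrm{Aut}(\cF)$. This is a deformation-theoretic computation of the fibre of $c$, in the spirit of \cite{Voi15} and \cite{LiZ22}, in which the vanishing of the pertinent $\Ext^1$-groups --- supplied precisely by (ii) and (iii) --- guarantees unobstructedness and the expected fibre dimension. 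Granting it, $\dim c(\mathcal T)=\sum_in_i+\sum_jm_j+\chi(\cE,\cF)-\hom(\cE,\cE)-\hom(\cF,\cF)+1$, which equals $\rd(Q)-d$ by the displayed identity, the additivity of $\rd$, and $d=\sum_i(\rd(\cE_i)-n_i)+\sum_j(\rd(\cF_j)-m_j)$. Theorem~\ref{thm:torsion} then gives $c_2(Q)\in\bS_d(X)$, hence $c_2(\cE)-c_2(\cF)\in\bS_d(X)$.
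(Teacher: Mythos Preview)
Your overall strategy matches the paper's proof: produce the cokernel $Q$ of a general $\phi\colon\cE\to\cF$, vary $(\cE,\cF)$ over products of constant cycle subvarieties, and show that the resulting constant cycle locus in the moduli of torsion sheaves has dimension at least $\rd(Q)-d$, so that Theorem~\ref{thm:torsion} applies. Your reduction in Step~1 and the dimension identities in Step~3 are correct and agree with the paper's computation.

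The gap is exactly where you flag it: you do not prove that the general fibre of $c$ has the expected dimension, and your description of the missing argument as a ``deformation-theoretic computation'' relying on ``unobstructedness'' does not point to the actual mechanism. The paper's argument (its Claim~\ref{claim:fiber}) is a direct $\Hom$-space comparison, not an infinitesimal one. First one shows that $Q$ determines each $\cE_i$ and $\cF_j$ up to isomorphism: applying $\Hom(\cF_i,-)$ to any other sequence $0\to\cE'\to\cF'\to Q\to0$ with $\cF'=\bigoplus_j\cF_j'$, the vanishings from (ii) give (via Serre duality) $\Hom(\cF_i,\cE')=\Ext^1(\cF_i,\cE')=0$, hence $\dim\Hom(\cF_i,\cF')=\dim\Hom(\cF_i,Q)=\dim\Hom(\cF_i,\cF)$; condition (iii) then fixes $\dim\Hom(\cF_i,\cF_j')$ for every $j\neq i$ numerically (equal to $-\langle\bfw_i,\bfw_j\rangle$ for $j>i$ and to $0$ for $j<i$), forcing $\dim\Hom(\cF_i,\cF_i')=1$ and hence $\cF_i\cong\cF_i'$ since both are stable of the same phase. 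The $\cE_i$ are handled by the same method. Second, once $(\cE,\cF)$ is pinned down, the fibre over $Q$ is parametrized by surjections $\cF\to Q$ together with an identification $\cE\cong\ker$, and $\dim\Hom(\cF,Q)=\dim\Hom(\cF,\cF)$ follows again from $\Hom(\cF,\cE)=\Ext^1(\cF,\cE)=0$, yielding exactly your orbit dimension. So the vanishings in (ii) and (iii) enter as exact numerical constraints on $\Hom$-dimensions, not as obstruction vanishing; your sketch, as written, does not supply this step.
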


The proof will be given later.   With this result, we have 
\begin{corollary}\label{cor:well-def}
     $\fro_{\srX}$ is independent of the choice of $N$ and $\cF$. 
\end{corollary}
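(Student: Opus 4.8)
The plan is to derive the statement from Proposition~\ref{prop:main}. Fix two data $(N_1,\cF_1)$ and $(N_2,\cF_2)$ as in Definition~\ref{twisted:o}, with $\cF_s\in N_s^{\rm l.f.}$ lying on a constant cycle Lagrangian subvariety $Z_s$; set $r_s=\rank\cF_s$ and let $\fro_\srX^{(s)}$ denote the corresponding degree-$1$ cycle. Since both cycles have degree $1$, it suffices to prove $\fro_\srX^{(1)}-\fro_\srX^{(2)}\in\QQ\cdot\fro_X$, as a degree-$0$ element of $\QQ\cdot\fro_X$ must vanish.

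The first step is to manufacture a single instance of Proposition~\ref{prop:main}. Tensoring $\cF_s$ by a line bundle $L$ carries $N_s$ isomorphically onto another Bridgeland moduli space (the Mukai vector is moved by the isometry $\exp(c_1(L))$, so its orthogonal complement still contains an isotropic class), carries $Z_s$ onto a constant cycle Lagrangian subvariety through $\cF_s\otimes L$, keeps $\cF_s\otimes L$ locally free, and---because $c_1(\cF_s)\cdot c_1(L)$ and $c_1(L)^2$ lie in $\QQ\cdot\fro_X$ by \cite{BV04}---does not change the class of Definition~\ref{twisted:o} (this is Remark~\ref{rmk:tensor}). To equalize ranks I would form the polystable locally free sheaves
\[
\cE=\bigoplus_{a=1}^{r_2}\bigl(\cF_1\otimes A_a\bigr),\qquad \cF=\bigoplus_{b=1}^{r_1}\bigl(\cF_2\otimes B_b\bigr),
\]
of common rank $r_1r_2$, with auxiliary line bundles $A_a,B_b$ still to be chosen. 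Each summand lies on a constant cycle Lagrangian subvariety of its moduli space meeting the locally free locus; since such a Lagrangian has dimension equal to $\rd$ of that summand, the orbit of the summand inside the locally free locus has dimension exactly $\rd$ of it. Hence every term in the defect $d$ of Proposition~\ref{prop:main} vanishes, so $d=0$ and the proposition yields $c_2(\cE)-c_2(\cF)\in\bS_0(X)\subseteq\QQ\cdot\fro_X$, provided the hypotheses (i)--(iii) can be met.

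Securing those hypotheses is the main point, and I expect it to be the only real work. Hypothesis (i) is automatic, since all summands are locally free. For the conditions among the summands of $\cE$ one has $(\cF_1\otimes A_a)^\vee\otimes(\cF_1\otimes A_{a'})=\cF_1^\vee\otimes\cF_1\otimes(A_{a'}\otimes A_a^{-1})$, so choosing $A_a=A^{\otimes a}$ for a sufficiently ample $A$ forces $\rH^1$ to vanish for $a\ne a'$ (Serre vanishing for $a'>a$, and Serre duality with $\omega_X\cong\cO_X$ for $a>a'$) and $\rH^0$ to vanish for $a>a'$ (a coherent sheaf twisted by a sufficiently negative line bundle has no sections); only finitely many pairs occur. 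The conditions among the summands of $\cF$ are handled identically with $B_b=C^{\otimes b}$ for another ample $C$. Finally the mixed hypothesis (ii)---global generation and vanishing of $\rH^1,\rH^2$ of $\cF_1^\vee\otimes\cF_2\otimes(A_a^{-1}\otimes B_b)$---is arranged by taking $C$ sufficiently more ample than $A$, say $C=A^{\otimes r_2}\otimes D$ with $D$ sufficiently ample, so that every $A_a^{-1}\otimes B_b$ is sufficiently ample; again only finitely many twists are involved.

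It remains to unwind the output. Expanding the second Chern class of a direct sum gives $c_2(\cE)=\sum_a c_2(\cF_1\otimes A_a)+\sum_{a<a'}c_1(\cF_1\otimes A_a)\cdot c_1(\cF_1\otimes A_{a'})$, whose cross-terms are products of divisor classes on $X$ and hence lie in $\QQ\cdot\fro_X$ by \cite{BV04}; the same reference gives $c_2(\cF_1\otimes A_a)\equiv c_2(\cF_1)\pmod{\QQ\cdot\fro_X}$. Thus $c_2(\cE)\equiv r_2\,c_2(\cF_1)$ and $c_2(\cF)\equiv r_1\,c_2(\cF_2)$ modulo $\QQ\cdot\fro_X$, so $r_2\,c_2(\cF_1)-r_1\,c_2(\cF_2)\in\QQ\cdot\fro_X$. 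Substituting into Definition~\ref{twisted:o}, the difference $\fro_\srX^{(1)}-\fro_\srX^{(2)}$ equals $\tfrac{1}{r_1r_2}\bigl(r_2\,c_2(\cF_1)-r_1\,c_2(\cF_2)\bigr)$ up to a rational multiple of $\fro_X$, hence lies in $\QQ\cdot\fro_X$; being of degree $0$, it vanishes, which is the assertion.
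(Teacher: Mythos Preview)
Your argument is correct and follows essentially the same route as the paper: equalize ranks by taking direct sums of line-bundle twists of the two sheaves, choose the twists so that the hypotheses of Proposition~\ref{prop:main} hold with defect $d=0$, and read off $r_2\,c_2(\cF_1)-r_1\,c_2(\cF_2)\in\bS_0(X)=\QQ\cdot\fro_X$.

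One small point to tighten: the vanishing conditions (ii) and (iii) in Proposition~\ref{prop:main} are required for \emph{all} objects in the relevant moduli spaces, not just for the chosen $\cF_1,\cF_2$ (the proof of the proposition moves within the orbits $O_{\cE_i},O_{\cF_j}$). So when you write $(\cF_1\otimes A_a)^\vee\otimes(\cF_1\otimes A_{a'})$ and invoke Serre vanishing, you really need the vanishing of $\rH^1\bigl((\cF')^\vee\otimes\cF''\otimes A^{a'-a}\bigr)$ uniformly for all $\cF',\cF''$ in $N_1$, and similarly for global generation and the other conditions. This uniform version over a projective base is exactly what the paper isolates as Lemma~\ref{lem:langer}; once you plug that in, your argument is complete and matches the paper's.
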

\begin{proof}
First, we fix $\bfv=(r,L+rB,0)$ a primitive Mukai vector which satisfies the assumptions in Proposition \ref{prop:existence}.  Let $\cF$ be a point lying on a  constant cycle Lagrangian subvariety  $Z\subseteq N$.

Suppose $N'$ is another moduli space of twisted locally free sheaves and it contains a constant cycle Lagrangian subvariety $Z'$. Let $\cE\in N'$. We set $\ell=\rank (\cF)$ and $k=\rank(\cE)$. Then one can take an ample line bundle $H$ and define 
\begin{itemize}
     \item  $\cE_i=\cE\otimes H^{\otimes a_i},~i=1,\ldots,\ell$.
    \item $\cF_j=\cF\otimes H^{\otimes b_j},~j=1,\ldots, k$;
\end{itemize}  
By choosing $b_j-b_{j-1}$, $a_i-a_{i-1}$ and $b_1-a_k$ sufficiently large, $\cF_i$ and $\cE_i$ satisfy the conditions in Proposition \ref{prop:main} by Lemma \ref{lem:langer} below. Due to Remark \ref{rmk:tensor}, this does not change the associated twisted BV class. Thus  we have \begin{equation}\label{well-defined}
   \begin{aligned}
         c_2(\bigoplus\limits_{j=1}^k \cF_i)-c_2(\bigoplus\limits_{i=1}^\ell \cE_i) \in \bS_0(X)\\
   \end{aligned} 
\end{equation}
which is equivalent to $ k c_2(\cF)-\ell c_2(\cE) \in \bS_0(X)$.
\end{proof}

\begin{lemma}\label{lem:langer}
Let $M$ be a  scheme of finite type. Let $\srE$ be a coherent sheaf on $M\times X$ and let $H$ be an ample line bundle on $X$.  
Then there exists $d \in\NN$ such that whenever $k \geq d$, $\rH^i(\scE_b\otimes H^{\otimes k})=0, i=1,2$  for any $b\in M $.
\end{lemma}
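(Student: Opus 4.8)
The statement is a uniform Serre vanishing, and I would prove it by a generic flatness reduction followed by Noetherian induction. First, since $M$ is of finite type over $\CC$ it is Noetherian, and because each fibre $\scE_b$ is the pullback of $\srE$ along $\{b\}\times X\hookrightarrow M\times X$, which factors through $M_{\mathrm{red}}\times X$, replacing $M$ by $M_{\mathrm{red}}$ leaves all the sheaves $\scE_b$ unchanged; so we may assume $M$ reduced. Grothendieck's generic flatness theorem then yields a dense open $U\subseteq M$ over which $\srE|_{U\times X}$ is flat over $U$, and the complement $Z:=M\setminus U$ is a proper closed subscheme.

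Over $U$, let $\pi\colon U\times X\to U$ be the projection and write $p_X$ for the projection to $X$; the line bundle $p_X^\ast H$ is $\pi$-ample, so relative Serre vanishing provides $d_U\in\NN$ with $R^i\pi_\ast(\srE\otimes p_X^\ast H^{\otimes k})=0$ for all $i\ge 1$ and all $k\ge d_U$. As $\srE$ is flat over $U$, I would then invoke Grothendieck's cohomology-and-base-change theorem in its descending-induction form: starting above $\dim X$, where every cohomology group vanishes trivially, the vanishing of these higher direct images forces all the relevant comparison maps $R^i\pi_\ast(-)\otimes k(b)\to \rH^i(X,(-)_b)$ to be isomorphisms, whence $\rH^i(X,\scE_b\otimes H^{\otimes k})=0$ for every $b\in U$, every $i\ge 1$, and every $k\ge d_U$; in particular for $i=1,2$.

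To finish, I would run a Noetherian induction on closed subsets of $M$: the two steps above apply verbatim to the coherent sheaf $\srE|_{Z_{\mathrm{red}}\times X}$ on $Z_{\mathrm{red}}\times X$, whose fibres at points of $Z$ coincide with those of $\srE$, producing a bound valid on $Z$; since $M$ is Noetherian the induction terminates after finitely many steps, and taking the maximum of the finitely many bounds gives the required $d$.

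The one point that needs care is the cohomology-and-base-change bookkeeping in the middle step: one must check that the comparison maps are isomorphisms over the whole of $U$ throughout the range $1\le i\le\dim X$, which is precisely the content of the standard descending-induction argument (Grothendieck; see also Mumford's treatment) and uses flatness over $U$ in an essential way. Everything else is formal. Alternatively, the whole lemma can be obtained in one stroke from boundedness: the family $\{\scE_b\}_{b\in M}$ is bounded, hence of uniformly bounded Castelnuovo--Mumford regularity with respect to a sufficiently divisible power of $H$, and the uniform vanishing follows immediately from the definition of regularity.
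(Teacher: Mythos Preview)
Your proof is correct and follows the standard route via generic flatness plus Noetherian induction. The paper's argument is close in spirit---it too runs an induction (on $\dim M$) and invokes relative Serre vanishing---but organises the inductive step around the supports $N^i$ of $R^i f_*(\srE\otimes g^*H^{\otimes k})$ rather than a generic-flatness stratum. The substantive difference is that you are explicit about the step the paper leaves implicit: passing from vanishing of higher direct images to vanishing of fibre cohomology requires a base-change argument, and you secure flatness over $U$ first so that the descending cohomology-and-base-change induction applies cleanly. Your alternative one-line argument via boundedness and uniform Castelnuovo--Mumford regularity is also valid and is arguably the quickest route to the lemma.
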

\begin{proof}
This can be done by induction on the dimension of $M$. 
When $\dim M=0$, this is clear. Suppose this holds when $\dim M\leq n$. If $\dim M=n+1$,  let $N^i$ be the support of  the sheaf
$$\bR^i f_*(\srE\otimes g^*H^{\otimes k}),~i=1,2 $$
which is a closed subscheme of $M$ of finite type. Here,  $f:M\times X\to M$ and $g:M\times X\to X$ are projections. By Serre vanishing,  there exists $d_0\in \NN$ such that  $\dim N^i<\dim M$ when $k>d_0$.   By induction hypothesis, there exists $d_1\in\NN$ such that $\rH^i(\cE_b\otimes H^{\otimes k})=0$ for $b\in N_i$. Take $d=\max\{d_0,d_1\}$.
\end{proof}

\subsection{Proof of Proposition \ref{prop:main}}
The idea is to consider the degeneracy loci of morphisms between twisted sheaves. Let us recall the  definition and some basic properties.  
\begin{definition}\label{def:deg}
    Let $\cE$ and $\cF$ be two $\srX$-twisted locally free sheaves.
    For $t:\cE\to \cF\in \Hom(\cE,\cF)$, we define the $k$-th degeneracy locus $\srD_k(t)$ to be the closed substack of $\srX$ 
    defined by the image of the map $$\wedge^{k+1}t^\vee: \srH om(\wedge^{k+1}\cE,\wedge^{k+1}\cF)^\vee\to \cO_\srX.$$  We can also define the schematic $k$-th degeneracy locus $D_k(t)$ which is the scheme-theoretic image in $X$. (\cf.~\cite[Definition 2.2.6.2]{Lie07})
\end{definition}

Form the definition, since $\srH om(\wedge^{k+1}\cE,\wedge^{k+1}\cF)$ is the pullback of a sheaf on $X$, $\srD_k(t)\to D_k(t)$ is the pullback of   $\srX\to X$ via $D_k(t)\to X$. The following result is well-known for untwisted locally free sheaves (\cf.~\cite[\S 4.1]{B91}\cite[Lemma 2.14]{JL22}). 
\begin{lemma}\label{lemma:degenerte}
Let $\cE$ and $\cF$ be two $\srX$-twisted locally free sheaves of rank $e$ and $f$.
  Suppose $\cE^\vee\otimes \cF$ is globally generated. Then  for  general  $t\in \Hom(\cE,\cF)=\rH^0(\srX,\cE^\vee\otimes \cF)$,  $D_{k}(t)$ is either empty or has pure codimension $(e-k)(f-k)$. Moreover, the singular locus ${\rm Sing~}D_{k}(t)\subset D_{k-1}(t)$.
\end{lemma}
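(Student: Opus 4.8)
The statement is a twisted analogue of the classical genericity result on degeneracy loci of morphisms of vector bundles, so I would reduce it to the untwisted case by passing through the Severi-Brauer variety. Recall from \S\ref{ssec:twisted} that a $\srX$-twisted locally free sheaf $\cE$ gives a diagram with $\varpi:\PP(\cE)\to\srX$ and $q:\PP(\cE)\to Y$, where $Y\to X$ is the Severi-Brauer variety of $(X,\alpha)$ and $\PP(\cE)\to Y$ is essentially trivial. The key point is that $\Hom_\srX(\cE,\cF)$ is canonically identified with $\rH^0(\srX,\cE^\vee\otimes\cF)$, and since $\cE^\vee\otimes\cF$ is $0$-twisted it is the pullback of a genuine locally free sheaf $\cG$ on $X$ via $p$, with $\rH^0(\srX,\cE^\vee\otimes\cF)=\rH^0(X,\cG)$ (using $p_*p^*=\id$ on $0$-twisted sheaves, as in Lemma \ref{c_2(i_*L) and i_*(c_1(L))}). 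So a general $t\in\Hom(\cE,\cF)$ is a general section of the globally generated bundle $\cG$ on $X$, and $\wedge^{k+1}t^\vee$ is the pullback of the corresponding map $\wedge^{k+1}\cG'^\vee\to\cO_X$ built from the untwisted data; hence $D_k(t)$ — the scheme-theoretic image in $X$ — is exactly the $k$-th degeneracy locus of a morphism of honest vector bundles on $X$ obtained by the standard construction. I would then invoke the classical genericity theorem (\cite[\S 4.1]{B91}, \cite[Lemma 2.14]{JL22}): for a general section of a globally generated bundle, the degeneracy locus $D_k$ is empty or of pure codimension $(e-k)(f-k)$ with $\mathrm{Sing}\,D_k\subseteq D_{k-1}$.

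\textbf{Key steps in order.} First I would make precise the translation of the local data: on an open $U\subseteq X$ over which the gerbe splits, choosing a trivialization identifies $\cE|_U,\cF|_U$ with ordinary bundles and $t|_U$ with an ordinary morphism, while $\srH om(\wedge^{k+1}\cE,\wedge^{k+1}\cF)^\vee$ with an ordinary sheaf on $U$; the transition data twist by $\alpha$, but $\wedge^{k+1}t^\vee$ is built from $\cE^\vee\otimes\cF$-type terms which are untwisted, so the defining ideal of $\srD_k(t)$ descends along $\srX\to X$ and $\srD_k(t)=p^*D_k(t)$ (this is already noted in the excerpt). Second, I would observe that the natural parameter space $\Hom(\cE,\cF)=\rH^0(X,\cG)$ for $\cG$ globally generated on $X$ is precisely the setup of the untwisted lemma. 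Third, apply the untwisted result verbatim to get the codimension statement and the singular-locus containment on $X$; since $\srD_k(t)\to D_k(t)$ is a $\mu_m$-gerbe (in particular faithfully flat of relative dimension $0$), codimension and the singular locus are unchanged — so the same conclusions hold for $\srD_k(t)\subseteq\srX$ if one wants the stacky statement, but the statement as phrased is about $D_k(t)\subseteq X$, which is immediate.

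\textbf{Main obstacle.} The only genuinely delicate point is making rigorous the claim that "a general element of $\Hom(\cE,\cF)$ corresponds to a general section of a globally generated bundle on $X$ to which the classical lemma applies" — i.e.\ that the classical genericity argument (a Bertini-type transversality of the incidence variety in the total space of $\cG$ fibered over $X$) is insensitive to the twisting. This is really a matter of checking that every object appearing in that argument ($\cG$, the universal degeneracy stratification, the relevant jet bundles) is genuinely untwisted, which follows because $\cE^\vee\otimes\cF$ and all its exterior-power-type combinations carry weight $0$. I would spell this out in one paragraph and then cite \cite{B91} for the transversality computation itself rather than reproduce it. A minor secondary point: if $\rank\cE\ne\rank\cF$ one should be slightly careful about which degeneracy loci are nonempty, but this is exactly as in the untwisted statement and needs no extra work. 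I expect the whole proof to be short — essentially "reduce to untwisted, apply \cite{B91}."
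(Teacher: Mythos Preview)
Your overall strategy---descend everything $0$-twisted to $X$ and run the Bertini-type argument there---is viable, but it is \emph{not} the paper's route, and one sentence in your plan is actually false as stated.

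The problematic claim is that ``$D_k(t)$ \ldots\ is exactly the $k$-th degeneracy locus of a morphism of honest vector bundles on $X$ obtained by the standard construction.'' There is no such morphism on $X$: the Brauer class $[\srX]$ is precisely the obstruction to writing the descended bundle $\cG=p_\ast(\cE^\vee\otimes\cF)$ as $E^\vee\otimes F$ for honest bundles $E,F$ on $X$. What descends is $\cG$ together with an intrinsic rank stratification in its total space (well-defined because the local identifications $\cG_x\cong\Hom(\CC^e,\CC^f)$ differ only by diagonal scalars, under which rank is invariant). So you cannot simply cite \cite{B91} or \cite{JL22} as stated---those are about an actual morphism $E\to F$. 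Your ``main obstacle'' paragraph shows you sense this, and indeed the fix is to rerun the transversality argument for general sections of $\cG$ against this stratification; this works, but it means opening the proof of \cite{B91} rather than invoking it as a black box, contrary to your summary ``reduce to untwisted, apply \cite{B91}.''

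The paper takes the Severi--Brauer route you mention in your first sentence but then abandon. It pulls back along $\pi:Y\to X$ to the Severi--Brauer variety, where the gerbe $\srY\to Y$ becomes essentially trivial; there one untwists by a line bundle $\cL\in\Pic(\srY)$ to get an honest morphism $\cE'\otimes\cL^\vee\to\cF'\otimes\cL^\vee$ of bundles on $Y$, to which \cite{JL22} applies directly. Since $\pi^\ast:\rH^0(X,\cE^\vee\otimes\cF)\to\rH^0(Y,(\cE')^\vee\otimes\cF')$ is an isomorphism, a general $t$ on $X$ pulls back to a general morphism on $Y$, and $D_k(\pi^\ast t)=\pi^{-1}D_k(t)$. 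Because $\pi$ is a $\PP^n$-bundle, both the codimension and the identification $\mathrm{Sing}\,\pi^{-1}D_k(t)=\pi^{-1}(\mathrm{Sing}\,D_k(t))$ descend. This buys a clean black-box citation at the cost of one extra pullback; your approach stays on $X$ but requires reproving the transversality step for a bundle with a rank-type stratification that does not globally split as $\Hom(E,F)$.
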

\begin{proof}

When $\srX\to X$ is trivial, this is exactly \cite[Lemma 2.14]{JL22}. In the case $\srX\to X$ is essentially trivial,  after taking  a twisted line bundle $\cL\in\Pic(\srX)$, we can identify $\Hom_{\cO_\srX}(\cE,\cF)$ with $\Hom_{\cO_X}(\cE\otimes \cL^\vee,\cF\otimes \cL^\vee)$ and their schematic degeneracy loci are the same. The assertion follows from the untwisted version.

In general,   we let $Y\to X$ be a Severi-Brauer variety of $\srX\to X$  with the Cartesian diagram
\begin{equation*}
    \begin{tikzcd}
\srY \arrow[d, "\varpi"] \arrow[r, ""] & Y \arrow[d, "\pi"] \\
\srX \arrow[r, ""]                & X               
\end{tikzcd}
\end{equation*}
Then $\srY\to Y$ is essentially trivial. Set $\cE'=\varpi^\ast\cE$ and $\cF'=\varpi^\ast \cF$. Then we have $$\pi^*(\cE^\vee \otimes \cF)\cong (\cE')^\vee\otimes \cF'$$ as untwisted sheaves. By projection formula, 
there is an isomorphism
\begin{equation}
    \pi^\ast:\rH^0(X, \cE^\vee \otimes \cF)\to \rH^0(Y,(\cE')^\vee\times \cF').
\end{equation} 

Since $\srY\to Y$ is essentially trivial, the assertion holds for $\pi^\ast t$ when $t$ is general. Note that $D_k(\pi^\ast t)=\pi^{-1}D_k(t)$ from the definition, we therefore get $$\codim D_k(t)=\codim D_k(\pi^\ast t)=(e-k)(f-k).$$
and $ {\rm Sing~} \pi^{-1}D_{k}(t)\subset \pi^{-1}(D_{k-1}(t))$.
As  $Y\to X$ is a $\PP^n$-fibration, one must have  ${\rm Sing}~\pi^{-1}D_{k}(t)=\pi^{-1}({\rm Sing} ~D_k(t))$. It follows that $D_k(t)\subseteq D_{k-1}(t)$ as well. 
\end{proof}

Let us come back to the proof of Proposition \ref{prop:main}. From the vanishing condition \eqref{eq:vanishing2} in assumption (iii),  we have $$\dim_\CC \Ext^1(\cE,\cE)=2\sum\limits_{i=1}^k \rd(\cE_i){\rm~ and~}\dim_\CC \Ext^1(\cF,\cF)=2\sum\limits_{j=1}^\ell \rd(\cF_j).$$

By assumption (ii), for general  $t\in \Hom(\cE,\cF),$ 
we have a short exact sequence
\begin{equation}\label{eq:exact}
0\to \cE\xrightarrow{t} \cF\to \cQ_t\to 0.
\end{equation}
According to Lemma \ref{lemma:degenerte}, $\cQ_t$ is supported on a smooth curve $ \srC_t$ and it is locally free of rank $1$ on its support.

Let $\mathscr{E}_{i}\in \rD^{(-1,1)}(\srX\times \srM_i)$ and $\mathscr{F}_{i}\in \rD^{(-1,1)}(\srX\times \srN_j)$  be the universal sheaf. Set $$\mathscr{G}:=\boxtimes_{i,j}(\srE_{i}^\vee \boxtimes \srF_{j})\in \mathrm{Coh}^{(-1,1,1)}(X\times \prod_{i=1}^{k} \srM_i \times \prod_{j=1}^{\ell} \srN_j).$$ 
For simplicity, we assume that $\srM_i=M_i$ and $\srN_j=N_j$ are  fine moduli spaces. Otherwise, one  may either work with everything over stacks directly or pass to the associated Severi-Brauer varieties just as in the proof of \cite[Theorem 3.1]{LiZ22}. Take an irreducible component $Y_i\subseteq O_{\cE_i}$ of dimension $n_i$ and $Z_j\subseteq O_{\cF_j}$ of dimension $m_j$,  we set $$Y=\prod_{i=1}^{k} Y_{i}~{\rm and}~ Z=\prod_{j=1}^{\ell}Z_{j}.$$
Then we can restrict $\pi_\ast(\scG)$ to $Y \times Z$ which is locally free by assumption (i).  Define 
    $\bP=\PP((\pi_\ast \scG)|_{Y\times Z})$ to be the associated projective bundle over $Y \times Z$.  There is a diagram 
\begin{equation*}
\begin{tikzcd}
\bP \ar[r,dotted,"\psi"] \ar[d, "\pi"] & M_H(\srX,\bfv_0) \\
Y\times Z & 
\end{tikzcd} 
\end{equation*}
where the rational map $\psi$ is defined by sending the arrow $\cE \xrightarrow{t} \cF$ to $\cQ_t$. Let $\bP_{y,z}=\pi^{-1}(y,z)$ be the fiber of $\pi$ over $(y,z)\in Y\times Z$. 

 \begin{claim} \label{claim:fiber}
 $\psi(\bP_{y_1,z_1})\cap \psi(\bP_{y_2,z_2})\neq \emptyset$ if and only if $(y_1,z_1)=(y_2,z_2)$. Moreover, the  fiber of $\psi$ over $\cQ_t$ has dimension  
 \begin{equation}\label{eq:dim-fiber}
 \begin{aligned}
     \rd(\cE)+\rd(\cF)-\frac{\langle \bfv(\cE),\bfv(\cE)\rangle}{2}-\frac{\langle \bfv(\cF),\bfv(\cF)\rangle}{2}-2
 \end{aligned} 
 \end{equation}
 for general $t$.
 \end{claim}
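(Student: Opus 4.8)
The plan is to extract both assertions from the defining short exact sequence
\begin{equation*}
0\longrightarrow \cE\xrightarrow{\ t\ }\cF\longrightarrow \cQ_t\longrightarrow 0,
\end{equation*}
which holds for $[t]$ in the domain of $\psi$ by Lemma \ref{lemma:degenerte} and assumption (ii), where $\cE=\bigoplus_i\cE_i$ with $\cE_i\in Y_i\subseteq O_{\cE_i}$, $\cF=\bigoplus_j\cF_j$ with $\cF_j\in Z_j\subseteq O_{\cF_j}$, and $\cQ_t\in M_H(\srX,\bfv_0)$ is a stable (hence simple) $\srX$-twisted torsion sheaf, locally free of rank $1$ on its smooth support curve. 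Throughout I will use Serre duality on the K3 surface $X$: since $\omega_X=\cO_X$, assumption (ii) gives not only $\Ext^1(\cE_i,\cF_j)=\Ext^2(\cE_i,\cF_j)=0$ but also $\Hom(\cF_j,\cE_i)\cong\Ext^2(\cE_i,\cF_j)^\vee=0$ and $\Ext^1(\cF_j,\cE_i)\cong\Ext^1(\cE_i,\cF_j)^\vee=0$ for all $i,j$; in particular $\Hom(\cF,\cE)=0$ and $\Ext^1(\cF,\cE)=0$.

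\emph{Separation of fibres.} The ``if'' direction is trivial. For the converse, suppose $\cQ_{t_1}\cong\cQ_{t_2}=:\cQ$ with $[t_a]\in\bP_{y_a,z_a}$, so that we have two length-one locally free resolutions $0\to\cE^{(a)}\to\cF^{(a)}\to\cQ\to 0$. Since $\Ext^1(\cF^{(1)},\cE^{(2)})=\Ext^1(\cF^{(2)},\cE^{(1)})=0$, Schanuel's argument applies to the fibre product $\cF^{(1)}\times_{\cQ}\cF^{(2)}$ and yields $\cE^{(1)}\oplus\cF^{(2)}\cong\cE^{(2)}\oplus\cF^{(1)}$. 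The category of coherent $\srX$-twisted sheaves is Krull--Schmidt and each $\cE^{(a)}_i,\cF^{(a)}_j$ is stable, hence indecomposable, so both sides have the same multiset of indecomposable summands. As the Mukai vectors $\bfv(\cE_i)$ and $\bfv(\cF_j)$ are pairwise distinct — which is the case in the application in Corollary \ref{cor:well-def}, where all summands are sufficiently positive twists of two fixed bundles — the summands with Mukai vector among the $\bfv(\cE_i)$ and those with Mukai vector among the $\bfv(\cF_j)$ match up separately. Hence $\cE^{(1)}\cong\cE^{(2)}$ and $\cF^{(1)}\cong\cF^{(2)}$, and because $M_i,N_j$ are fine moduli spaces this forces $y_1=y_2$ and $z_1=z_2$.

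\emph{The fibre as an orbit.} By the previous step $\psi^{-1}(\cQ_t)\subseteq\bP_{y,z}=\PP(\Hom(\cE,\cF))$. I then claim $\psi^{-1}(\cQ_t)$ coincides (as a constructible set) with the orbit of $[t]$ under $G:=\mathrm{Aut}(\cE)\times\mathrm{Aut}(\cF)$ acting by $(a,c)\cdot[t']=[c\,t'\,a^{-1}]$. The inclusion $G\cdot[t]\subseteq\psi^{-1}(\cQ_t)$ is immediate, since $c$ induces $\coker t'\xrightarrow{\sim}\coker(c\,t'\,a^{-1})$. For the reverse inclusion, given $[t']$ with $\coker t'\cong\cQ_t$ one forms the fibre product $\cF\times_{\cQ_t}\cF$ of the two surjections: the extension $0\to\cE\to\cF\times_{\cQ_t}\cF\to\cF\to 0$ splits because $\Ext^1(\cF,\cE)=0$, the splitting is unique because $\Hom(\cF,\cE)=0$, and combining the two resulting identifications of $\cF\times_{\cQ_t}\cF$ with $\cE\oplus\cF$ produces the element of $G$ conjugating $t$ to $t'$. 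Therefore $\dim\psi^{-1}(\cQ_t)=\dim\Hom(\cE,\cE)+\dim\Hom(\cF,\cF)-\dim\mathrm{Stab}_G[t]$.

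\emph{The stabiliser — the crux.} The remaining and, I expect, the genuinely hard step is to show $\dim\mathrm{Stab}_G[t]=2$ for general $t$. The stabiliser surjects onto $\GG_m$ by sending $(a,c)$ to the scalar $\nu$ with $c\,t=\nu\,t\,a$ (realised by $(\id_\cE,\nu\,\id_\cF)$), with kernel the automorphism group of the two-term complex $[\cE\xrightarrow{t}\cF]$. Any such automorphism $(a,c)$ induces an automorphism of $\cQ_t$, which is a scalar $\nu$ because $\cQ_t$ is simple; then $c-\nu\,\id_\cF$ annihilates $\cQ_t$, hence factors as $\cF\xrightarrow{b}\cE\xrightarrow{t}\cF$, and $\Hom(\cF,\cE)=0$ forces $b=0$, so $c=\nu\,\id_\cF$ and $a=\nu\,\id_\cE$ since $t$ is injective. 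Thus the kernel is $\GG_m$ and $\dim\mathrm{Stab}_G[t]=2$. Finally, Serre duality on $X$ gives $-\langle\bfv(\cE),\bfv(\cE)\rangle=\chi(\cE,\cE)=2\dim\Hom(\cE,\cE)-\dim\Ext^1(\cE,\cE)$, whence $\dim\Hom(\cE,\cE)=\rd(\cE)-\tfrac12\langle\bfv(\cE),\bfv(\cE)\rangle$ (and likewise for $\cF$), and substituting into the displayed dimension formula reproduces exactly \eqref{eq:dim-fiber}. The separation of fibres is essentially formal once distinctness of the Mukai vectors is in hand; it is the stabiliser computation — where both the stability of $\cQ_t$ and the vanishing $\Hom(\cF,\cE)=0$ are indispensable — that I regard as the main obstacle, together with checking that the orbit really fills out the fibre.
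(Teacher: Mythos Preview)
Your argument is essentially correct and reaches the same formula, but it takes a different route from the paper's and has one gap worth flagging.

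\textbf{Separation of fibres.} The paper does not use Schanuel or Krull--Schmidt. Instead it applies $\Hom(\cF_i,-)$ to both short exact sequences and, using the vanishings $\Hom(\cF_i,\cE)=\Ext^1(\cF_i,\cE)=0$ from (ii), obtains $\dim\Hom(\cF_i,\cF')=\dim\Hom(\cF_i,\cF)$; then assumption (iii) pins down $\dim\Hom(\cF_i,\cF_j')$ for $j\neq i$ and forces $\dim\Hom(\cF_i,\cF_i')=1$, hence $\cF_i\cong\cF_i'$ by stability. Your Schanuel/Krull--Schmidt argument is more conceptual, but you justify the matching of summands by ``pairwise distinct Mukai vectors'', which is \emph{not} among the hypotheses of Proposition~\ref{prop:main}; the claim is stated and used in that generality (also in the proof of Theorem~\ref{mainthm}, not only in Corollary~\ref{cor:well-def}). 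The fix is easy and already implicit in the setup: you only need that no $\cE_i^{(a)}$ is isomorphic to any $\cF_j^{(b)}$ (immediate from $\Hom(\cF_j,\cE_i)=0$, Serre-dual to (ii)) and that $\cE_i^{(1)}\not\cong\cE_{i'}^{(2)}$ for $i\neq i'$ (immediate from $\Hom(\cG,\cG')=0$ for $\cG\in M_i,\cG'\in M_{i'},i>i'$, which is exactly \eqref{eq:vanishing3}). Replacing your Mukai-vector sentence with this observation makes the Krull--Schmidt step go through in full generality.

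\textbf{Fibre dimension.} Here the two arguments are equivalent repackagings. The paper writes the fibre as the space of surjections $\cF\to\cQ_t$ together with an identification $\ker\cong\cE$, modulo $\Aut(\cQ_t)$ and the projectivization, getting $\dim\Hom(\cF,\cQ_t)+\dim\Hom(\cE,\cE)-\dim\Hom(\cQ_t,\cQ_t)-1$; then it notes $\Hom(\cF,\cF)\cong\Hom(\cF,\cQ_t)$ from the long exact sequence and (ii). This is precisely your $\dim\Hom(\cE,\cE)+\dim\Hom(\cF,\cF)-2$. Your orbit--stabiliser framing is arguably cleaner; note, however, that what you call ``the crux'' --- the stabiliser computation and the fact that the orbit fills the fibre --- is in the paper's language just the isomorphism $\Hom(\cF,\cF)\xrightarrow{\sim}\Hom(\cF,\cQ_t)$ (every surjection onto $\cQ_t$ arises from a unique endomorphism of $\cF$, automatically invertible), so it is the most routine step rather than the hardest.
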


Using \eqref{eq:dim-fiber}, we can deduce that
\begin{equation*}
\begin{aligned}
     \dim \mathrm{Image}(\psi)&=\dim Y+\dim Z+h^0(\pi_\ast\cG|_{Y\times Z})-1-\dim \hbox{fiber of $\psi$}
     \\ 
     & =\sum\limits_{i=1}^{k} n_i+ \sum\limits_{j=1}^{\ell} m_j -\langle \bfv(\cE),\bfv(\cF)\rangle -( \rd(\cE)+\rd(\cF)-\frac{\langle \bfv(\cE),\bfv(\cE)\rangle}{2}-\frac{\langle \bfv(\cF),\bfv(\cF)\rangle}{2}-1)\\
     &=\sum\limits_{i=1}^{k} n_i+ \sum\limits_{j=1}^{\ell} m_j+\rd(\cQ_t)-( \rd(\cE)+\rd(\cF))\\ &=\rd(\cQ_t)-d.
     \end{aligned}
\end{equation*} 
 Due to Theorem \ref{thm:MZ}, $\mathrm{Image}(\psi)$ is a constant cycle subvariety.  By Theorem \ref{thm:torsion}, we get 
  \begin{equation}
     c_2(\cE)-c_2(\cF)=c_2(\cQ_t)\in \bS_{d}(X). 
    \end{equation}
 as desired.

It remains to prove Claim \ref{claim:fiber}.
 Suppose there is another short exact sequence
    \begin{equation}\label{eq:exact1}
0\to \cE'\xrightarrow{t'} \cF'\to \cQ_{t'}\to 0,
\end{equation}
with $\cF'=\bigoplus\limits_{j=1}^\ell \cF_j'$  and $\cQ_{t'}\cong \cQ_{t}$. After applying the functor $\Hom(\cF_i,-)$ to \eqref{eq:exact1} and \eqref{eq:exact}, we get
\begin{equation}
\begin{tikzcd}
{0=\Hom(\cF_i,\cE)} \arrow[r] \arrow[d, "\cong"] & {\Hom(\cF_i,\cF)} \arrow[r]  & {\Hom(\cF_i,\cQ_t)} \arrow[r] \arrow[d, "\cong"] & {\Ext^1(\cF_i,\cE)=0} \\
{0=\Hom(\cF_i,\cE)} \arrow[r]                    & {\Hom(\cF_i,\cF')} \arrow[r] & {\Hom(\cF_i,\cQ_t')} \arrow[r]                   & {\Ext^1(\cF_i,\cE)=0}
\end{tikzcd}
\end{equation}
Here the vanishing is guaranteed by \eqref{eq:vanishing1} and $\cF_i$ is a direct summand of $\cF$.
Therefore, we have 
$$\dim_\CC \Hom(\cF_i, \cF')=\dim_\CC \Hom(\cF_i,\cQ_t')=\dim_\CC \Hom(\cF_i,\cQ_{t'}')=\dim_\CC \Hom(\cF_i,\cF).$$ 
Note that by \eqref{eq:vanishing2} and \eqref{eq:vanishing3}, we have
$$\dim_\CC \Hom(\cF_i, \cF_j)=\dim_\CC \Hom(\cF_i, \cF_j')=-\langle\bfv_i,\bfv_j\rangle,\quad \forall j> i$$ and $$\dim_\CC \Hom(\cF_i, \cF_j)=\dim_\CC \Hom(\cF_i, \cF_j')=0,\quad \forall j< i.$$
Therefore, we obtain $$\dim_\CC \Hom(\cF_i,\cF_i')=\dim_\CC \Hom(\cF_i,\cF_i)=1.$$
This forces $\cF\cong \cF'$ since  $\cF_i$ and $\cF_i'$ are stable of the same phase. 
Similarly, if we apply 
the functor $\Hom(\cE_i,-)$, we can get 
\begin{equation}
    \dim_\CC \Hom(\cE_i',\cE_i) =\dim_\CC \Ext^2(\cE_i,\cE_i')=\dim_\CC \Ext^2(\cE_i,\cE_i)=1. 
\end{equation}
This implies $\cE\cong \cE'$ and proves the first claim.   The fiber $\psi^{-1}(\cQ_t)$ consists of the kernel of $\cF\to \cQ_t$ and we have 
\begin{equation*}
\begin{aligned}
 \dim \psi^{-1}(\cQ_t)&=\dim_\CC \Hom(\cF,\cQ_t)+\dim_\CC\Hom(\cE,\cE)-\dim_\CC \Hom(\cQ_t,\cQ_t)-1   \\  &=\dim_\CC \Hom(\cF,\cF)+\dim_\CC \Hom(\cE,\cE)-\dim_\CC \Hom(\cQ_t,\cQ_t)-1 \\ &=\rd(\cF)-\frac{\langle \bfv(\cF),\bfv(\cF)\rangle}{2}+\rd(\cE)-\frac{\langle \bfv(\cE),\bfv(\cE)\rangle}{2} -2.
\end{aligned}    
\end{equation*} 
Here, $\dim_\CC\Hom(\cF,\cQ_t)=\dim_\CC \Hom(\cF,\cF)$ is obtained by applying the functor $\Hom(\cF,-)$ to \eqref{eq:exact} and $\dim_\CC \Hom(\cQ_t,\cQ_t)=1$ as $\cQ_t$ is stable.
\qed

\subsection{Proof of Theorem \ref{mainthm}}  Let $\cE\in \rD^{(1)}(\srX)$.
As the argument proceeds a similar way as in \cite{Voi08,SYZ20}, we only sketch the proof.
\subsubsection*{i). $\cE$ is a sheaf}
  Set $r=\rank(\cE)$.  If $r=0$,  this is Theorem \ref{thm:torsion}.  When $r>0$, we have the following situations
  \begin{itemize}
  \item If $\cE$ is locally free and simple,  we can find two primitive Mukai vectors $\bfv', \bfw\in \widetilde{\rH}(\srX)$ satisfying the conditions  in Proposition \ref{prop:existence} and $$\rank (\bfv')+\rank(\cE)=\rank(\bfw).$$   By Proposition \ref{prop:existence}, there exist $\srX$-twisted locally free sheaves 
  \begin{center}
     $\cE' \in M_{H}(\srX,\bfv')$ and $\cF\in M_{H}(\srX,\bfw)$    
  \end{center}
 lying on some constant cycle Lagrangian subvarieties respectively. In other words, $\dim O_{\cE'}=\rd(\cE')$ and $\dim O_\cF=\rd(\cF)$.

As in the proof of Corollary \ref{cor:well-def},  up to tensoring sufficiently ample line bundles,  we assume  $\cE$, $M_{H}(\srX,\bfv')$  and $M_{H}(\srX,\bfw)$ satisfy the conditions in Proposition \ref{prop:main}.   It follows from Proposition \ref{prop:main} that   $$c_2(\cE)+c_2(\cE')-c_2(\cF)\in \bS_{\rd(\cE)}(X).$$
This implies $c_2(\cE)-r\fro_{\srX}\in \bS_{\rd(\cE)}(X)$ from the definition of $\fro_\srX$. 

\item If $\cE$ is only torsion-free, the double dual $\cE^{\vee\vee}$ of $\cE$ is locally free. There is a short exact sequence of sheaves
$$0 \to \cE \to  \cE^{\vee\vee} \to \cQ \to  0$$
where $\cQ$ is a 0-dimensional sheaf whose support is of length $\ell$. A direct calculation yields $\mathrm{d}(\cQ) > \ell $ and $c_2(\cQ) \in \bS_\ell(X) \subseteq \bS_{\mathrm{d}(\cQ)}(X)$.
 When $\cE$ is slope stable, the double dual $\cE^{\vee\vee}$ is also stable and hence simple. This gives
$$c_2(\cE^{\vee\vee})-r\fro_{\srX}\in \bS_{\mathrm{d}(\cE^{\vee\vee})}(X).$$
Consider the distinguished triangle $\cQ[-1] \to \cE \to \cE^{\vee\vee}\to \cQ, $  we have
$$\Hom(\cQ[-1], \cE^{\vee\vee}) = 0,$$
as  $\cQ$ is $0$-dimensional and $\cE^{\vee\vee}$ is locally free. Then $\rd(\cE) \leq \rd(\cE^{\vee\vee})+\rd(\cQ[-1])$. It follows that $$c_2(\cE)-r\fro_{\srX} =c_2(\cE^{\vee\vee})-r \fro_{\srX}-c_2(\cQ)\in \bS_{\rd(\cE^{\vee})}(X)+\bS_{\rd(\cQ)}(X).$$ 
When $\cE$ is not slope stable, one can obtain the assertion by applying the argument above to the  HN factors of $\cE$. 
\item If $\cE$ is not torsion free, there is a short exact sequence 
\begin{equation}
    0\to \cT\to \cE\to \cF\to 0
\end{equation}
with $\cT$ torsion and $\cF$ torsion-free. Then we have 
\begin{equation}
    \begin{aligned}
        c_2(\cE)-r\fro_{\srX}&=c_2(\cT)+(c_2(\cF)-r\fro_{\srX})\\
        &\in \bS_{\rd(\cT)+\rd(\cF)}(X) \subseteq \bS_{\rd(\cE)}(X).
    \end{aligned}
\end{equation}
The last inequality holds as $\Hom(\cT, \cF) = 0$ and hence  $\rd(\cT)+\rd(\cF)\leq \rd(\cE)$. 

  \end{itemize}

\subsubsection*{ii). $\cE$ is not a sheaf}   There is a standard distinguished triangle
$$\cF \to  \cE \to  \cG \to  \cF[1].$$
where $\cG$ is the shifted sheaf $h^m(\cE)[-m]$  and $\cF\in\rD^{(1)}(\srX)$ is the truncated complex $\tau^{\leq m-1}$ 
By induction on the length of $\cE$, one can get $c_2(\cE)-\rank(\cE)\fro_{\srX}\in \bS_{\rd(\cE)}(X)$. 
\qed

\section{Proof of Main Theorems} \label{sec:proof of mainthm2}

\subsection{Beauville-Voisin subgroup}
Let $\rR(X)\subset \CH(X)$ be the Beauville-Voisin subring. 
Given a derived equivalence $\Psi:\rD^{(1)}(\srX)\to \rD^{(1)}(\srY)$, it induces a (modified) additive group isomorphism 
$$\begin{aligned}
    \widetilde{\Psi}^{\CH}:\CH^*(X) &\to \CH^*(Y),\\
    \widetilde{\ch}_\srX(\cE) & \mapsto \widetilde{\ch}_{\srY}(\Psi(\cE))Y
\end{aligned}$$
where $\widetilde{\ch}_\srX(\cE)=\ch_\srX(\cE)-\rank(\cE)\fro_{\srX}$ and $\widetilde{\ch}_\srY(\Psi(\cE))=\ch_\srY(\Psi(\cE))-\rank(\Psi(\cE))\fro_{\srY}.$ Then the following result can be viewed  as a generalization of \cite{Huy10}. 
\begin{proposition}\label{prop:BV}
The group isomorphism $\widetilde{\Psi}^{\CH}$ preserves the Beauville-Voisin group.
\end{proposition}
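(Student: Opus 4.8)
The plan is to rephrase the statement as an assertion about the bottom level of the filtration $\bS_\bullet(\rD^b(\srX))$ and then to prove it with the constant‑cycle methods of Sections~\ref{sec:torsion}--\ref{sec:twisted BV}. First, since $\widetilde\ch_\srX\otimes\QQ\colon K_0(\srX)_\QQ\to\CH^*(X)_\QQ$ is an isomorphism — it is the (rationally bijective) twisted Chern character $\ch_\srX$ followed by the unipotent automorphism $(a_0,a_1,a_2)\mapsto(a_0,a_1,a_2-a_0\fro_\srX)$ of $\CH^*(X)$ — the map $\widetilde\Psi^{\CH}$ is conjugate to the $K$‑theoretic transform $\Psi^{K}\colon K_0(\srX)_\QQ\to K_0(\srY)_\QQ$ and satisfies $\widetilde{\Psi^{-1}}^{\CH}=(\widetilde\Psi^{\CH})^{-1}$; hence it is enough to prove the single inclusion $\widetilde\Psi^{\CH}(\rR(X))\subseteq\rR(Y)$. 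Next, because $\rR^{\le1}(X)=\CH^{\le1}(X)_\QQ$ and $\rR(X)$ is a subring (so $c_1(\cE)^2\in\QQ\fro_X$), the condition $\widetilde\ch_\srX(\cE)\in\rR(X)$ only constrains the codimension‑two part and, after substituting $\ch_2=\frac12c_1^2-c_2$, becomes a condition on $c_2(\cE)-\rank(\cE)\fro_\srX$ equivalent to $\cE\in\bS_0(\rD^b(\srX))$. Thus the Proposition is, in essence, the bottom ($i=0$) case of Theorem~\ref{mainthm2}: one must show that $\Psi$ carries $\bS_0(\rD^b(\srX))$ into $\bS_0(\rD^b(\srY))$.

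To do this I would exhibit $\widetilde\ch_\srX^{-1}(\rR(X))\subseteq K_0(\srX)_\QQ$ as the $\QQ$‑span of Chern characters of objects lying on constant‑cycle Lagrangian subvarieties of moduli spaces: the twisted skyscraper sheaves $\kappa(x)$ at Beauville--Voisin points $x$ lying on a rational curve (contributing the class $\fro_X$), the twisted torsion sheaves on a curve $C\in|L|$ with the twist chosen so that $\ch_2=0$ and supported on the dense set of BV points of $C$ (accounting for $\NS(X)_\QQ$), and the twisted locally free sheaves on constant‑cycle Lagrangians of high‑rank Gieseker moduli spaces used in Definition~\ref{twisted:o} (giving the degree‑zero direction). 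By Theorem~\ref{mainthm3} and Theorem~\ref{thm:SYZimpliesBV}, together with Proposition~\ref{prop:dense-lag} and Corollary~\ref{cor:torsion-BV}, each such $\cG$ lies in $\bS_0(\rD^b(\srX))$. Now fix such a generator $\cG$, stable for a stability condition $\sigma$ with $\bfv=\bfv(\cG)$; after composing $\Psi$ with a shift and a twist by a line bundle (which changes neither $\rR$‑membership nor $\bS_\bullet$‑membership, by Remark~\ref{rmk:tensor}) we may assume $\Psi(\cG)$ is $\Psi\sigma$‑stable, so that $\Psi$ induces an isomorphism of an open part of $M_\sigma(\srX,\bfv)$ onto an open part of $M_{\Psi\sigma}(\srY,\Psi^{H}\bfv)$, of the same dimension. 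An isomorphism of varieties is an isomorphism on $\CH_0$, so by Marian--Zhao (Theorem~\ref{thm:MZ}) it matches the rational‑equivalence orbits, and hence the constant‑cycle subvarieties, on the two sides; therefore $\Psi(\cG)$ again lies on a constant‑cycle subvariety of maximal dimension, whence $\Psi(\cG)\in\bS_0(\rD^b(\srY))$ by Theorem~\ref{thm:SYZimpliesBV}. Linearity then gives $\widetilde\Psi^{\CH}(\rR(X))\subseteq\rR(Y)$.

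Two technical points must be handled. Theorem~\ref{mainthm3} yields the needed implication only on the locally free locus $M^{\rm l.f.}$; I would arrange the Mukai vectors to lie in the range $\rank>\frac{\bfv^2+2}{2}$ of Remark~\ref{rmk:mukai-vector}, where every $\mu_H$‑stable twisted sheaf is locally free (legitimate again by Remark~\ref{rmk:tensor}), or else invoke \cite{BZ19} as in the corollary following Theorem~\ref{mainthm3}. And to reduce an arbitrary object of $\bS_0(\rD^b(\srX))$ to the stable‑sheaf generators above, one uses Harder--Narasimhan and truncation dévissage together with the subadditivity of $\rd$ under the relevant vanishing of $\Hom$'s, exactly as in the proof of Theorem~\ref{mainthm}.

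I expect the main obstacle to be the absence of rigid — in particular spherical — objects in $\rD^b(\srX)$, which is precisely what makes this a genuine departure from Huybrechts' treatment of the untwisted case in \cite{Huy10}: there $\rR(X)$ is generated by Chern characters of spherical objects, and these land in $\rR(X)$ because they are rigid, so Theorem~\ref{mainthm} forces their $c_2$ into $\bS_0$. In the twisted setting the substitute is to generate $\rR(X)$ by objects on \emph{positive‑dimensional} constant‑cycle Lagrangians of Bridgeland moduli spaces, so the real work is to control how a derived equivalence moves such loci — the content of Theorem~\ref{mainthm3}. A secondary subtlety is the normalisation: one must check that the $\fro_\srX$‑ and $\fro_\srY$‑shifts built into $\widetilde\ch_\srX$ and $\widetilde\ch_\srY$ are the ones matched by $\Psi$, and this is where the construction of $\fro_\srX$ through constant‑cycle Lagrangians, rather than an arbitrary degree‑one class, is used.
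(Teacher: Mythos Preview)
Your overall strategy matches the paper's: generate $\rR(X)$ by $\widetilde{\ch}_\srX$ of stable objects lying on constant cycle Lagrangians, transport via the isomorphism $M_\sigma(\srX,\bfv)\cong M_\tau(\srY,\bfw)$ induced by $\Psi$, and conclude on the $\srY$ side. But the gap you yourself flag is real, and your proposed workarounds do not close it. On the $\srY$ side you know $\Psi(\cG)$ lies on an $n$-dimensional constant cycle subvariety of $M_\tau(\srY,\bfw)$ and you want $\Psi(\cG)\in\bS_0(\rD^b(\srY))$; the converse direction of Theorem~\ref{thm:SYZimpliesBV} requires this orbit to meet $M^{\rm l.f.}_\tau(\srY,\bfw)$ in dimension $n$. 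Arranging $\rank(\bfv)>\tfrac{\bfv^2+2}{2}$ only controls the source Mukai vector: twisting by line bundles or shifting changes neither $\rank(\bfw)$ nor $\bfw^2$, and after a general derived equivalence $\Psi(\cG)$ need not be a sheaf at all, let alone locally free. The appeal to \cite{BZ19} introduces the extra hypothesis that $\srY$ is derived equivalent to an untwisted K3, which the proposition does not assume.

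The paper sidesteps this by exploiting the full content of Proposition~\ref{prop:dense-lag}: the constant cycle Lagrangians produced there are Zariski \emph{dense} and all represent the same class. Taking $\bfv$ primitive with $\bfv^\perp$ containing an isotropic element (such vectors already span $\rR(X)$, which replaces your three-type generating set), the moduli space $M_\sigma(\srX,\bfv)$ has a birational Lagrangian fibration, so one may choose $\cE$ whose orbit $O_\cE$ has its $n$-dimensional components dense in $M_\sigma(\srX,\bfv)$; this density transports under the isomorphism to $M_\tau(\srY,\bfw)$. One then argues directly with the incidence $\Gamma\subset M_\tau(\srY,\bfw)\times Y^{[n]}$ of \eqref{eq:incident}, not via the converse of Theorem~\ref{thm:SYZimpliesBV}: density guarantees that some $n$-dimensional component $Z\subset O_{\Psi(\cE)}$ meets the open locus where both projections of $\Gamma$ are finite, so $\pi_2(\pi_1^{-1}(Z))\subset Y^{[n]}$ is an $n$-dimensional constant cycle subvariety, and one concludes via the untwisted Hilbert scheme exactly as in the proof of Corollary~\ref{cor:torsion-BV}. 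The missing idea, then, is to use density to land in the generic locus of $\Gamma$ and reduce to $Y^{[n]}$, rather than to force $M_\tau(\srY,\bfw)$ itself to consist of locally free sheaves.
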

\begin{proof}
Let $\bfv\in \widetilde{\rH}(\srX)$ be a primitive Mukai vector whose orthogonal complement in $\widetilde{\rH}_{\rm alg}(\srX)$ contains an isotropic element. Note that $\rR(X)$ is spanned by such vectors. It suffices to show 
for some $\cE\in M_\sigma(\srX,\bfv)$ with $\widetilde{\ch}_\srX(\cE)\in \rR(X),$ we have 
\begin{equation}\label{eq:FM-BV}
    \widetilde{\ch}_\srY(\Psi(\cE))\in \rR(Y).
\end{equation}

Let $\dim M_{\sigma}(\srX,v)=2n$. 
Note that $M_\sigma(\srX,\bfv)$ admits a birational Lagrangian fibration. As proved in Proposition \ref{prop:dense-lag}, there exists $\cE\in M_{\sigma(\srX,\bfv)}$ such that $O_\cE$ has dimension $n$ and its maximal dimensional irreducible components are Zariski dense in $M_{\sigma}(\srX,\bfv)$.  
Set $\cF=\Psi(\cE)\in \rD^{(1)}(\srY)$ and $\bfw=\bfv(\cF)$.   The derived equivalence $\Psi$ induces an isomorphism
\begin{equation}\label{eq:de-iso}
\begin{aligned}
      M_\sigma(\srX, \bfv)&\to M_\tau(\srY,\bfw)\\
      x&\mapsto \Psi(x)
\end{aligned}
\end{equation}
with $\tau=\Psi(\sigma)$.  Then $\dim O_\cF=n$ and its maximal dimensional irreducible components are also Zariski dense in $M_{\tau}(\srY,\bfw)$.   Using the twisted incidence variety $\Gamma\subseteq M_\tau(\srY,\bfw)\times Y^{[n]}$ defined above, one can immediately get $c_2(\cF)-\rank(\cF)\fro_\srY\in \bS_0(Y)$
as the projections $\Gamma\to Y^{[n]}$ and $\Gamma\to M_\tau(\srY,\bfw)$ are generically finite and dominant. 
 \end{proof}

\begin{remark}
   In \cite[Question 4.1]{SYZ20}, it has been asked if the map
$$\begin{aligned}
    \Psi^{\CH}:\CH^*(X) &\to \CH^*(Y),\\
    \ch_\srX(\cE) & \mapsto \ch_{\srY}(\Psi(\cE))
\end{aligned}$$
preserves  the Beauville-Voisin group. By Proposition \ref{prop:BV}, this question is equivalent to ask whether $\fro_{\srX}=\fro_X$. 
\end{remark}

\subsection{Proof of Theorem \ref{mainthm2}}
Suppose we have $$c_2(\cE)-\rank(\cE)\fro_{\srX}=x_1+\ldots +x_i+k\fro_\srX,$$ where $x_1,\ldots ,x_i$ are  distinct points on $X$ and $k$ is a constant. Take the direct sum  $$ \cE'\cong \CC_{x_1}\oplus \cdots\oplus \CC_{x_i}\in \rD^{(1)}(\srX).$$
Then $\widetilde{\ch}(\cE)-\widetilde{\ch}(\cE')\in \rR(X)$.
Since $\widetilde{\Psi}^{\CH}$ preserves the Beauville-Voisin group, we get 
\begin{equation}
    \widetilde{\ch}(\Psi(\cE))-\widetilde{\ch}(\Psi(\cE'))\in \rR(Y). 
\end{equation}
By Theorem \ref{mainthm},  $\Psi(\cE')\in \bS_{\rd(\cE')}(\rD^{(1)}(\srY))=\bS_{i}(\rD^{(1)}(\srY))$. Hence $\Psi(\cE)\in \bS_i(\rD^{(1)}(\srY))$ as well. \qed

\subsection{SYZ filtration v.s. Voisin's filtration}

We now generalize the work in \cite{SYZ20} and \cite{LiZ22} to twisted K3 surfaces. Assume that $M_\sigma(\srX,\bfv)$ is a moduli space of $\sigma$-stable locally free sheaves on $\srX$ and $\sigma$ is $\bfv$-generic. Using the twisted Beauville-Voisin class $\fro_{\srX}$, we define the twisted incidence correspondence:
\begin{equation}\label{eq:incident}
    \Gamma' := \left\{ (\xi, \cE) : c_2(\cE) - \rank(\cE) \fro_{\srX} = [\xi] + k \fro_X \in \CH_0(X) \right\} \subset X^{[d]} \times M_\sigma(\srX,\bfv),
\end{equation}
for an arbitrary Mukai vector $\bfv$. By Theorem \ref{mainthm}, the projection maps $\pi_1 \colon \Gamma' \to M_\sigma(\srX, \bfv)$ and $\pi_2 \colon \Gamma' \to X^{[d]}$ are both dominant.

Thus, there exists a subvariety $\Gamma \subseteq \Gamma'$ that is generically finite over both $M_\sigma(\srX,\bfv)$ and $X^{[d]}$. For two general points $p, q \in \Gamma$ lying on the same fiber of $\pi_1$ (respectively $\pi_2$), Theorem \ref{thm:MZ} implies that $\pi_2(p) = \pi_2(q) \in \CH_0(X^{[d]})$ (respectively $\pi_1(p) = \pi_1(q) \in \CH_0(M_\sigma(\srX,\bfv))$). Consequently, if $Z \subset X^{[d]}$ is a constant cycle subvariety, then $\pi_1(\pi_2^{-1}(Z))$ is also constant cycle. Using the construction above, we obtain

\begin{theorem}\label{thm:SYZimpliesBV}
 If $c_2(\cE)-\rank(\cE)\fro_{\srY}\in \bS_i(X)$, then $\dim O_\cE\geq d-i.$  Conversely,  if $\dim O_\cE \geq d-i$, then $\cE\in \bS_i(\rD^{(1)}(\srX))$.  Consequently, we have
    \[
    \bS^{\mathrm{SYZ}}_\bullet \CH_0(M_\sigma(\srX,\bfv)) = \bS^{\mathrm{BV}}_\bullet \CH_0(M_\sigma(\srX,\bfv)).
    \]
\end{theorem}

\begin{proof}
 By Theorems \ref{mainthm2} and \ref{thm:locally-free}, we may assume (after applying a derived equivalence) that $M_\sigma(\srX,\bfv)$ consists of locally free twisted sheaves. The remainder of the proof is identical to \cite[Theorem 0.5]{SYZ20}.
\end{proof}

As an application, we  obtain a Zariski density result for constant cycle subvarieties. 
\begin{corollary}\label{cor:dense}
For any $\cE\in M_{\sigma}(\srX,\bfv)$ with dim $\dim O_\cE= d-i$, the subvarieties of the orbit $O_\cE$ with dimension $d-i$ are dense in $M_{\sigma}(\srX,\bfv)$.
\end{corollary}
\begin{proof}
  Again, we may assume $M_\sigma(\srX,\bfv)$ consists of locally free twisted sheaves. Then this can be obtained along  the same lines as in  the proof \cite[Corollary 3.2]{LiZ22}.
\end{proof}

\subsection{Proof of Corollary \ref{cor：bir-invariant}}
By \cite[Proposition 4]{add16}, \cite[Lemma 2.6]{Huy17}, and \cite[Corollary 3.3]{LYZ23}, the condition on the Markman-Mukai lattice implies that $Y \cong M_\sigma(\mathscr{X}, \mathbf{v})$ for some twisted K3 surface $\mathscr{X} \to X$. Theorem \ref{mainthm3} establishes:
\[
\mathbf{S}^{\mathrm{BV}}_\bullet \operatorname{CH}_0(Y) = \mathbf{S}^{\mathrm{SYZ}}_\bullet \operatorname{CH}_0(Y).
\] 
 To prove birational invariance, we adapt the argument in \cite[Corollary 3.3]{LiZ22} to the twisted case. Since the original proof omits some technical details, we provide a comprehensive exposition below.

By \cite[Theorem 1.2]{BM14}, the hyper-Kähler birational models of $Y$ correspond to chambers in $\operatorname{Stab}^\dagger(\mathscr{X})$.
As shown in \cite[Corollary 3.3]{LiZ22}, any birational model $Y'$ is isomorphic to $M_{\sigma'}(\mathscr{X}, \mathbf{v})$ for some generic $\sigma'$ in a chamber $\mathcal{C}'\subseteq \operatorname{Stab}^\dagger(\mathscr{X})$, and the birational map 
\[
f: M_\sigma(\mathscr{X}, \mathbf{v}) \dashrightarrow M_{\sigma'}(\mathscr{X}, \mathbf{v})
\] 
is induced by a derived (anti-)autoequivalence $\Phi \in \operatorname{Aut}(\mathrm{D}^{(1)}(\mathscr{X}))$ (up to automorphism).

Let $U \subset Y$ be the domain where $f$ is defined. For any $E \in U$, Theorem \ref{mainthm2} guarantees:
\[
E \in \bS_i^{\rm SYZ}(\rD^{(1)}(\mathscr{X})) \implies \Phi(E) \in \bS_i^{\rm SYZ}(\rD^{(1)}(\mathscr{X}))
\]F
since derived equivalences preserve the SYZ filtration. Thus $f(E) \in \bS_i^{\rm SYZ}\CH_0(Y')$. Now consider any  point $F$ with $[F] \in \bS_i^{\SYZ}\CH_0(Y)$. By definition of the filtration, we have 
\begin{align}\label{eq:chernid}
[F] = \sum n_j [F_j] \quad \text{with} \quad c_2(F_j) \in \bS_i(\srX).
\end{align}
By Theorem \ref{thm:MZ}, the class $[F_j]$ is determined by $c_2(F_j)$. 
Using the surjective projections $\pi_1 : \Gamma' \to M_\sigma(\mathscr{X}, \mathbf{v})$ and $\pi_2 : \Gamma' \to X^{[d]}$ from the twisted incidence correspondence  \eqref{eq:incident}, we may adjust the $F_j$ to lie in $U$   while preserving \eqref{eq:chernid}. 

Applying $f_*$ we obtain:
\[
f_*[F] = \sum n_j [f(F_j)] \quad \text{with} \quad f(F_j) \in \bS_i^{\SYZ}\CH_0(Y').
\]
Hence $f_*[F] \in \bS_i^{\SYZ}\CH_0(Y')$. The reverse inclusion follows by applying the same argument to $f^{-1}$. We conclude:
\[
f_*(\bS_\bullet^{\SYZ}\CH_0(Y)) = \bS_\bullet^{\SYZ}\CH_0(Y').
\]
Theorem \ref{mainthm3} establishes $\mathbf{S}_\bullet^{\mathrm{BV}} = \mathbf{S}_\bullet^{\mathrm{SYZ}}$, completing the proof of birational invariance.

For the last assertion, note that if $\rank \Pic(Y) \notin \{2,3\}$, then either the algebraic Markman-Mukai lattice of $Y$ (which has rank $\geq 5$) contains an isotropic vector, or $Y$ admits no nontrivial birational hyper-K\"ahler model. The assertion follows immediately.  \qed

\subsection{Further remarks}\label{rmk:kummer}
There is a natural (conjectural) O'Grady filtration for abelian surfaces, which induces a Shen-Yin-Zhao (SYZ) type  filtration on albanese fiber of Bridgeland moduli space for (twisted) abelian surfaces. More precisely,  if $A$ is projective abelian surface, an O'Grady type set-theoretic filtration is given by 
\begin{equation}
    \bS_i(A):=\bS_i(\mathrm{Km}(A))\subseteq \CH_0(A)
\end{equation}
Here, $\CH_0(\mathrm{Km}(A))$ can be identified as a subset of $\CH_0(A)$ via the pullback of the rational map $A\dashrightarrow \mathrm{Km}(A)$. If $M=M_\sigma(\mathscr{A},\bfv)$ is a smooth projective  Bridgeland moduli space  of $\sigma$-stable objects on a twisted abelian surface $\mathscr{A}\to A$, the kernel $K:=K_\sigma(\mathscr{A},\bfv)$ of the albanese map 
\begin{equation}
\begin{aligned}
    \mathfrak{alb}\colon M_{\sigma}(\mathscr{A},\bfv)&\longrightarrow \widehat{A}\times A,\\
  \cF &\mapsto (\det(\cF)\otimes \det(\cF_0)^{-1}, \mathrm{alb}(c_2(\cF)-c_2(\cF_0)),
\end{aligned}
\end{equation}is a smooth projective hyper-K\"ahler variety, where $\cF_0\in M_\sigma(\mathscr{A},\bfv)$. Note that for each $\cF\in K$, one has 
$$c_2(\cF)-c_2(\cF_0)\in \CH_0(\mathrm{Km}(A))_{\hom}=\CH_0(A)_{\rm alb}.$$

If there is a $\cF_0$ with $c_2(\cF_0)\in \bS_0(A)$,  one can  thus define a Shen-Yin-Zhao type filtration on $K_\sigma(\mathscr{A},\bfv)$. In this situation, our construction of (twisted) incidence correspondences is also valid and one can obtain  a similar result for the degeneracy loci of stable locally free sheaves on twisted abelian surfaces. The main problem will be the very first step, that is to check  whether the induced SYZ-type filtration  coincides with Voisin's filtration when $K=\ker(A^{[n+1]}\to A)$. Note that in this case, one does have an element $\cF_0\in K$ with $c_2(\cF_0)\in \bS_0(A)$.

\section{Applications to Bloch's conjecture}\label{sec:bloch}
In  this section, we consider the action of (anti)-symplectic birational automorphism on the Chow group of a Bridgeland moduli space.  

\subsection{Bloch's conjecture for zero cycles}
For a smooth projective hyper-K\"ahler variety $Y$,  Beauville and Voisin predicted that there is  an increasing filtration $\bS_\bullet\CH^\ast(Y)$  which serves as an opposite filtration to the famous conjectural Bloch-Beilinson filtration. For zero cycles, it is expected 
\begin{equation}
\bS_\bullet\CH_0(Y)=\bS_\bullet^{\rm BV}\CH_0(Y). 
\end{equation}

A more approachable question to examine these conjectures (and expectations) is to consider the action of (anti)-symplectic birational automorphisms on $\mathrm{Gr}_\bullet \CH_0(Y)$.  More precisely, we have the following conjecture

\begin{conjecture}[Bloch's conjecture for (anti)-symplectic birational map]\label{conj}
Let $Y$ be a hyper-K\"ahler variety of dimension $2n$.  Set $$\Gr_{s}\CH_{0}(Y):=\bS_{s}^{\rm  BV}\CH_0(Y)/\bS_{s-1}^{\rm BV}\CH_0(Y)$$ to be the $s$-th graded piece. For any $\phi\in \mathrm{Bir}(Y)$, we have   
\begin{equation*}
    \phi^\ast|_{\rH^{2,0}(Y)}=\pm \id \Leftrightarrow  \phi_\ast|_{\mathrm{Gr}_s\CH_0(Y)}=(\pm 1)^s\id,~\forall ~1\leq s\leq n.
\end{equation*}
\end{conjecture}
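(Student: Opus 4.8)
The plan is to translate the birational geometry of $Y=M_\sigma(\srX,\bfv)$ (with $\bfv^2=2n-2$) into autoequivalences of $\rD^b(\srX)$, apply Theorem \ref{mainthm2}, and then treat the two implications separately. The first step is standard input: by the Bayer--Macr\`i description of the birational geometry of Bridgeland moduli spaces (the twisted counterpart of \cite{BM14}) together with twisted derived Torelli, every $\phi\in\mathrm{Bir}(Y)$ is induced, after a wall-crossing identification, by an autoequivalence $\Psi$ of $\rD^b(\srX)$, and $\phi^\ast|_{\rH^{2,0}(Y)}=\pm\id$ holds exactly when $\Psi$ is symplectic, resp. anti-symplectic. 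By Theorem \ref{mainthm2}, $\Psi$ preserves $\bS_\bullet(\rD^b(\srX))$, so $\phi_\ast$ preserves $\bS^{\rm SYZ}_\bullet\CH_0(Y)$ and acts on each $\Gr_s\CH_0(Y)$ --- this is what makes Conjecture \ref{conj} well-posed. Finally, by Theorem \ref{thm:MZ} the second Chern class embeds $\CH_0(Y)\hookrightarrow\CH_0(X)$, and by the argument of Section \ref{sec:proof of mainthm2} this embedding is equivariant for $\phi_\ast$ on the source and the modified operator $\widetilde\Psi^{\CH}$ of Proposition \ref{prop:BV} on the target; on the homologically trivial part $\CH_0(X)_{\hom}$ the latter is just $\gamma_\ast$ for the correspondence $\gamma\in\CH^2(X\times X)$ attached to the Fourier--Mukai kernel of $\Psi$.

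The implication $\Leftarrow$ can then be settled immediately. The construction of \cite{SYZ20} identifies $\Gr_1\CH_0(Y)$ with $\CH_0(X)_{\hom}$ compatibly with the above, so if $\phi_\ast=(\pm1)^s\id$ on every $\Gr_s\CH_0(Y)$ then, taking $s=1$, $\gamma_\ast=\pm\id$ on $\CH_0(X)_{\hom}$; hence the self-correspondence $\gamma\mp\Delta_X$ kills $\CH_0(X)_{\hom}$, and since a self-correspondence of a surface with trivial action on $\CH_0$ acts trivially on $\rH^{2,0}$ (a standard consequence of the Bloch--Srinivas decomposition of the diagonal), we get $\gamma^\ast=\pm\id$ on $\rH^{2,0}(X)$. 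As $\gamma^\ast|_{\rH^{2,0}(X)}$ records the sign of $\Psi$ and $\rH^{2,0}(Y)\cong\rH^{2,0}(X)$ $\Psi$-equivariantly, this yields $\phi^\ast|_{\rH^{2,0}(Y)}=\pm\id$, using nothing beyond Theorem \ref{mainthm2} and Theorem \ref{thm:MZ}.

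For $\Rightarrow$ the crux is a \emph{weight decomposition} of $\CH_0(Y)$: one wants $\Gr_s\CH_0(Y)$ to admit a description $\CH_0(Y)_{(s)}$ with $\CH_0(Y)_{(1)}=\CH_0(X)_{\hom}$ on which any birational automorphism $\phi$ with $\phi^\ast|_{\rH^{2,0}}=\varepsilon\,\id$ ($\varepsilon=\pm1$) acts by $\varepsilon^{s}$; granting this, $\Rightarrow$ is immediate. I would attempt to produce it by induction on $n$ from Corollary \ref{cor:coiso}: the algebraically coisotropic $\iota\colon Z_s\dashrightarrow B_s$ of codimension $s$ has general fibre an $s$-dimensional constant cycle subvariety, so the classes of these fibres generate $\Gr_s\CH_0(Y)$ modulo $\bS^{\rm SYZ}_{s-1}$, and one expects $B_s$ to be birational to a hyper-K\"ahler $Y_s$ of $K3^{[n-s]}$-type with $\rH^2_{\rm tr}(Y_s)=\rH^2_{\rm tr}(X)$, the weight-$s$ piece being obtained by pushing $\CH_0(Y_s)_{(1)}$ through the $s$ successive layers of the tower, each layer contributing one factor $\varepsilon$. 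When $\phi$ preserves a birational Lagrangian fibration this is realized concretely --- $\Gr_\bullet\CH_0(Y)$ is read off Beauville's weight decomposition of $\CH_0$ of the generic abelian fibre, on which an anti-symplectic $\phi$ acts as $[-1]$, forcing the signs $(\pm1)^s$ --- which is exactly Theorem \ref{thm:bloch}. The main obstacle in general is that an arbitrary $\phi$ does \emph{not} preserve $Z_s\dashrightarrow B_s$: to run the induction one needs the base of \emph{every} coisotropic subvariety of the given numerical type to be birational to a single fixed $Y_s$, so that $\phi$ induces an honest $\phi_s\in\mathrm{Bir}(Y_s)$ with $\phi_s^\ast|_{\rH^{2,0}}=\pm\id$ to which the inductive hypothesis applies --- this is essentially the conjectural multiplicative (O'Grady / Shen--Vial) decomposition of $\CH_0(Y)$, open for general $K3^{[n]}$-type varieties --- and, even granting it, matching the sign requires controlling $\phi_\ast$ on the constant-cycle fibre classes, which at present is only feasible when the fibration is isotropic. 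Thus a complete proof of Conjecture \ref{conj} would follow from the O'Grady filtration for abelian surfaces (settling the generalized Kummer analogue of \S\ref{rmk:kummer}) together with the multiplicative splitting for $K3^{[n]}$-type varieties; pending these, the conjecture remains open, with Theorem \ref{thm:bloch} recording the case where $\phi$ respects a birational Lagrangian fibration.
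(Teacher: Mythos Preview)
The statement is a \emph{conjecture}: the paper does not prove it in full, only the special case recorded as Theorem~\ref{thm:bloch}, and you correctly arrive at the same conclusion that the general statement remains open. Your treatment of the implication $\Leftarrow$ is essentially the paper's (both go through the identification $\Gr_1\CH_0(Y)\cong\CH_0(X)_{\hom}$ and a Bloch--Srinivas argument; the paper phrases it via the birational motive $\frh^\circ(Y)_{(1)}$ and its cohomology $\rT(X)$, but this is the same content).

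Where your route genuinely diverges is in the attack on $\Rightarrow$. You try to manufacture the weight decomposition geometrically, via the coisotropic tower $Z_s\dashrightarrow B_s$ of Corollary~\ref{cor:coiso}, and then observe that making $\phi$ descend to $B_s$ requires the conjectural multiplicative splitting. The paper instead bypasses this entirely: the twisted incidence $\Gamma$ of \eqref{eq:incident} induces an isomorphism of \emph{co-algebra objects} $\frh^\circ(M_\sigma(\srX,\bfv))\cong\frh^\circ(X^{[n]})$ (Proposition~\ref{prop:birationalmotive}), and on $X^{[n]}$ the multiplicative splitting is already known. This yields Lemma~\ref{lem:generator}: for \emph{any} correspondence $Z$, the action on $\Gr_1$ determines the action on every $\Gr_s$ by $(\pm1)^s$. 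So the paper's reduction is sharper --- the only obstruction to the full Conjecture~\ref{conj} is the $s=1$ case, i.e.\ showing that an (anti-)symplectic $\phi$ acts as $\pm\id$ on $\CH_0(X)_{\hom}$, which is a Bloch-type statement for a correspondence on the K3 surface itself; in the Lagrangian-fibration-preserving case this is supplied by \cite[Theorem~2.17]{LYZ23}. Your coisotropic approach, by contrast, demands the multiplicative decomposition for general $K3^{[n]}$-type varieties as an input, which is a strictly stronger unproved hypothesis. A secondary remark: your opening move of lifting $\phi$ to an autoequivalence $\Psi$ via Bayer--Macr\`i plus twisted derived Torelli is not needed --- the paper works directly with the graph of $\phi$ as a correspondence, and Lemma~\ref{lem:generator} applies to arbitrary $Z\in\CH^{2n}(Y\times Y)$.
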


This conjecture has been studied in \cite{LYZ23} for untwisted Bridgeland moduli spaces, i.e. when $Y$ is isomorphic to the moduli space of stable objects in $\rD^b(X)$ for some K3 surface $X$.  The method in \cite{LYZ23} is also valid for twisted Bridgeland moduli spaces.  We will explain this in the next subsection. 

\subsection{Birational motives} 
We denote by $\frh^\circ(Y)$ the birational motive of $Y$ in the sense of \cite{KS16}. It has been conjectured in \cite{vial22} that there is a natural grading on $\frh^{\circ}(Y)$ compatible with  the co-algebra structure on $\frh^0(Y)$ (\cf.~\cite[Conjecture 2]{vial22}).  When $Y=M_\sigma(X,\bfv)$ for untwisted K3, such grading is governed by the isomorphism  $$\frh^\circ(Y)\cong \frh^\circ(Y)_{(0)}\oplus\ldots \oplus \frh^\circ(Y)_{(n)}$$
as co-algebra objects by \cite[Theorem 4]{vial22}, where $\dim Y=2n$. The grading on $\frh^\circ(Y)$ is compatible with $\bS_\bullet^{\rm SYZ}\CH_0(Y)$, i.e.
$$\bS_i^{\rm SYZ}\CH_0(Y)=\CH_0(\frh^\circ(Y)_{(0)}\oplus\ldots \oplus \frh^\circ(Y)_{(i)}),$$(\cf.~\cite[Section 6.4 and Theorem 7.3]{vial22}). 
Due to Theorem \ref{mainthm}, we can provide new examples for \cite[Conjecture 2]{vial22}.  The idea is that the Shen–Yin–Zhao type filtration admits a “motivic” splitting on $\frh^\circ(Y)$. 

\begin{proposition}\label{prop:birationalmotive}
If $Y=M_\sigma(\srX,\bfv)$, then the twisted incidence variety $\Gamma$ induces an isomorphism $$\frh^\circ(M_\sigma(\srX,\bfv))\cong \frh^\circ(X^{[n]}),$$ as co-algebra objects.
Moreover, the isomorphism respects the natural gradings. 
\end{proposition}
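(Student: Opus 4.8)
The plan is to exploit the fact that $X^{[n]}$ is itself an untwisted Bridgeland moduli space, namely $X^{[n]}=M_H(X,(1,0,1-n))$, so that the graded co-algebra structure on $\frh^\circ(X^{[n]})$ is already available by \cite[Theorem 4]{vial22}, and then transport it along $\Gamma$. First I would check that $\Gamma$ induces an isomorphism of birational motives, ignoring the grading. Regard $\Gamma\in\CH_{2n}(M_\sigma(\srX,\bfv)\times X^{[n]})$ as a $\CH_0$-correspondence and let $\Gamma^t$ be its transpose. Over the function field $k:=k(M_\sigma(\srX,\bfv))$ the cycle $\Gamma_\ast[\eta]$ is a sum of points $\xi_i\in X^{[n]}_k$ all satisfying $c_2(\cdot)-\rank(\bfv)\fro_\srX=[\xi_i]+\kappa\fro_X$ with the same left-hand side; applying $\Gamma^t_\ast$ and invoking Theorem \ref{thm:MZ} over $k$ (two objects with equal $c_2$ in $\CH_0(X)$ are rationally equivalent in the moduli space) shows that $\Gamma^t_\ast\Gamma_\ast[\eta]$ is an integer multiple of $[\eta]$, the integer being the product of the two generic degrees of $\Gamma$, hence nonzero. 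The symmetric computation over $k(X^{[n]})$, using Theorem \ref{thm:MZ} for $X^{[n]}=M_H(X,(1,0,1-n))$ (two length-$n$ subschemes with the same class in $\CH_0(X)$ are rationally equivalent in $X^{[n]}$), controls $\Gamma_\ast\Gamma^t_\ast$. Since a morphism of birational motives is detected by its action on $\CH_0$ of the generic point, $\Gamma$ — normalized to preserve degrees of zero-cycles, which is harmless over $\QQ$ — is an isomorphism $\frh^\circ(M_\sigma(\srX,\bfv))\xrightarrow{\ \sim\ }\frh^\circ(X^{[n]})$.

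Next I would transport the grading and identify it. By \cite[Theorem 4]{vial22} (see also \cite[Section 6.4 and Theorem 7.3]{vial22}) the decomposition $\frh^\circ(X^{[n]})=\bigoplus_{i=0}^{n}\frh^\circ(X^{[n]})_{(i)}$ is characterized by $\bS_j\CH_0(X^{[n]}_\Omega)=\CH_0\big(\bigoplus_{i\le j}\frh^\circ(X^{[n]})_{(i),\Omega}\big)$ for every field extension $\Omega/\CC$. Pulling this decomposition back along the isomorphism of the previous paragraph defines a grading on $\frh^\circ(M_\sigma(\srX,\bfv))$. To see that it is the expected one I would rerun the incidence-correspondence argument from \S\ref{sec:twisted BV} (the paragraph preceding Theorem \ref{thm:SYZimpliesBV}) after base change along $\Spec\Omega\to\Spec\CC$: the twisted Beauville--Voisin class $\fro_\srX$, the class $\Gamma$, and the validity of Theorem \ref{mainthm} all persist over $\Omega$, so $\Gamma$ identifies $\bS^{\rm SYZ}_j\CH_0(M_\sigma(\srX,\bfv)_\Omega)$ with $\bS_j\CH_0(X^{[n]}_\Omega)$ for all $\Omega$ and $j$. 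Combining with the previous display yields $\bS^{\rm SYZ}_j\CH_0(M_\sigma(\srX,\bfv)_\Omega)=\CH_0\big(\bigoplus_{i\le j}\frh^\circ(M_\sigma(\srX,\bfv))_{(i),\Omega}\big)$, which is the sought compatibility with $\bS^{\rm SYZ}_\bullet$.

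For the co-algebra statement, the (small-diagonal) comultiplication on $\frh^\circ(X^{[n]})$ is graded by \cite[Theorem 4]{vial22}, and it remains to verify that $\Gamma$ intertwines the comultiplications on the two birational motives. I expect this to be the main obstacle: because $\Gamma$ is only generically finite, not an isomorphism of varieties, the intertwining is not geometric and must be checked at the level of generically defined correspondences, controlling the relevant triple compositions on $\CH_0$ of function fields by iterated application of Theorem \ref{thm:MZ}. A convenient way to organize the verification is to transport, through $\Gamma$, Vial's orthogonal idempotent projectors $p_i$ on $\frh^\circ(X^{[n]})$ (which are themselves manufactured from the incidence of $X^{[n]}$ with the Beauville--Voisin class) and to check that the transported projectors on $\frh^\circ(M_\sigma(\srX,\bfv))$ are again orthogonal idempotents summing to the identity, are compatible with the comultiplication, and cut out the pieces of the previous paragraph; each such identity reduces, using that $\Gamma^t\circ\Gamma$ and $\Gamma\circ\Gamma^t$ are scalars, to the corresponding identity on $\frh^\circ(X^{[n]})$ already established in \cite{vial22}. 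Coassociativity, cocommutativity and counit compatibility are then formal.
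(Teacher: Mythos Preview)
The paper's proof is a single sentence: having already established (in the paragraph before Theorem~\ref{thm:SYZimpliesBV}) that $\Gamma$ is generically finite and dominant over both factors, the entire assertion---isomorphism, co-algebra compatibility, and grading---is obtained by invoking \cite[Proposition~2.3]{vial22}. You are essentially reproving that proposition by hand in this particular case.

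Your Steps~1 and~2 are correct and amount to unpacking what Vial's black box contains. Step~3, however, has a gap as written. The projector-transport argument is circular: verifying that the transported idempotents $\gamma^{-1}p_i\gamma$ are compatible with the \emph{intrinsic} comultiplication $\delta_M$ on $M_\sigma(\srX,\bfv)$ cannot ``reduce to the corresponding identity on $\frh^\circ(X^{[n]})$'' unless you already know that $\gamma$ intertwines $\delta_M$ with $\delta_{X^{[n]}}$, which is precisely what you are trying to prove. The non-circular argument is the one you only gesture at with ``iterated application of Theorem~\ref{thm:MZ}'': compute $(\gamma\otimes\gamma)\circ\delta_M$ and $\delta_{X^{[n]}}\circ\gamma$ directly on the class of the generic point. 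The constant-cycle-fiber property of $\Gamma$ (the same input you used in Step~1) gives $\gamma([\eta_M])=d_1[\xi]$ for a single class $[\xi]$, and then $(\Gamma\times\Gamma)_\ast[(\eta_M,\eta_M)]=\sum_{i,j}[(\xi_i,\xi_j)]$ collapses to $d_1^2[(\xi,\xi)]$ by the exterior-product identity $[(\xi_i,\xi_j)]=[(\xi,\xi)]$; comparing with $\delta_{X^{[n]}}(d_1[\xi])=d_1[(\xi,\xi)]$ and normalizing gives the intertwining. Once that is in hand, everything about projectors and gradings transports formally. So your route is salvageable, but you have misidentified which computation carries the weight, and in any case the clean move is simply to cite \cite[Proposition~2.3]{vial22} as the paper does.
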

\begin{proof}
 As shown before,  $\Gamma \subseteq Y\times X^{[n]}$ is a generically finite  and dominant correspondence. The assertion is then a direct consequence of \cite[Proposition 2.3]{vial22}. 
\end{proof}

The key result is  
\begin{lemma}\label{lem:generator}
Let $Y=M_\sigma(\srX,\bfv)$ as before.   Let $Z\in \CH^{2n}(Y\times Y)_{\rm hom}$ be a correspondence. Then   we have $$Z_*|_{\mathrm{Gr}_s\CH_0(Y)} =(\pm)^s \id,$$ if $Z_*|_{\mathrm{Gr}_1\CH_0(Y)}=\pm \id$.
\end{lemma}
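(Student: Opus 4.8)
The statement is a ``multiplicativity'' assertion: knowing the action of a homologically trivial correspondence on the first graded piece $\mathrm{Gr}_1\CH_0(Y)$ forces its action on all higher pieces $\mathrm{Gr}_s\CH_0(Y)$. The plan is to reduce everything to the Hilbert scheme $X^{[n]}$ via Proposition \ref{prop:birationalmotive}, and then exploit the co-algebra (comultiplication) structure on the birational motive $\frh^\circ(X^{[n]})$ together with the known structure of its graded pieces. First I would use the isomorphism $\frh^\circ(Y)\cong\frh^\circ(X^{[n]})$ of graded co-algebra objects induced by the twisted incidence variety $\Gamma$, so that the correspondence $Z$ transports to a homologically trivial self-correspondence $\bar Z$ of $X^{[n]}$ acting compatibly with the grading; since the grading is compatible with $\bS_\bullet^{\rm SYZ}\CH_0$, it suffices to prove the claim for $X^{[n]}$.

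\textbf{Key steps.} The core mechanism is that each graded piece $\frh^\circ(X^{[n]})_{(s)}$ is, up to a Tate twist and as a co-algebra summand, built from the $s$-fold ``symmetric-type'' product of the transcendental motive of the K3 surface $X$ (this is the content of Vial's description \cite{vial22} of $\frh^\circ(M_\sigma(X,\bfv))$, which transports to the twisted setting via $\Gamma$). Concretely, I would invoke the co-algebra splitting $\frh^\circ(X^{[n]})\cong\bigoplus_{s=0}^n\frh^\circ(X^{[n]})_{(s)}$ and the fact that the comultiplication $\frh^\circ(X^{[n]})_{(s)}\to\bigoplus_{a+b=s}\frh^\circ(X^{[n]})_{(a)}\otimes\frh^\circ(X^{[n]})_{(b)}$ is, on $\CH_0$, the ``partial diagonal'' splitting of a length-$s$ zero-cycle into its constituents modulo $\bS_\bullet$. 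A homologically trivial correspondence $\bar Z$ acting as a co-algebra endomorphism that equals $\pm\id$ on the weight-one summand $\frh^\circ(X^{[n]})_{(1)}$ must then equal $(\pm1)^s\id$ on $\frh^\circ(X^{[n]})_{(s)}$: write a class in $\bS_s^{\rm SYZ}\CH_0$ modulo $\bS_{s-1}^{\rm SYZ}$ as a symmetrization of an $s$-tuple of points, apply comultiplication to land in the $s$-fold tensor power of $\mathrm{Gr}_1$, use that $\bar Z$ is compatible with comultiplication to factor its action through $\bar Z_*^{\otimes s}=(\pm\id)^{\otimes s}=(\pm1)^s\id$, and conclude by co-associativity/co-unit that $\bar Z_*=(\pm1)^s\id$ on $\mathrm{Gr}_s$. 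This is the standard ``nilpotence + multiplicativity'' argument (as in \cite{LYZ23} and \cite{vial22}), now available because Theorem \ref{mainthm} and Proposition \ref{prop:birationalmotive} supply the graded co-algebra splitting in the twisted case.

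\textbf{Main obstacle.} The delicate point is ensuring that the grading on $\frh^\circ(Y)$ produced by $\Gamma$ is genuinely a grading of \emph{co-algebra} objects — i.e.\ that the comultiplication respects the decomposition — rather than merely a direct-sum decomposition of the underlying motive; this is exactly what lets one pass from the weight-one action to the weight-$s$ action. Here I would lean on \cite[Proposition 2.3]{vial22} (transport of the co-algebra structure along a generically finite dominant correspondence) together with \cite[Theorem 4]{vial22} for the untwisted model $X^{[n]}$, checking that $\Gamma$ intertwines the two comultiplications up to the usual Tate twists. The remaining bookkeeping — that $Z$ being homologically trivial forces the weight-zero action to vanish on $\mathrm{Gr}_{\geq 1}$, and that all identifications are compatible with pushforward on $\CH_0$ — is routine and parallels \cite[Theorem 0.5]{SYZ20} and the arguments in \cite{LYZ23}.
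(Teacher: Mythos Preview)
Your reduction step via Proposition~\ref{prop:birationalmotive} is exactly what the paper does; the paper then simply invokes \cite[Proposition~3.8(iii)]{LYZ23} for the $X^{[n]}$ case and stops. So at the level of the paper's own proof your outline is on target.

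Where you go further is in trying to sketch the content of that cited proposition, and here there is a genuine gap. You assert that the transported correspondence $\bar Z$ ``is compatible with comultiplication'' and then factor its action on $\mathrm{Gr}_s$ through $\bar Z_*^{\otimes s}$ on $(\mathrm{Gr}_1)^{\otimes s}$. But nothing in the hypotheses makes $\bar Z$ a co-algebra endomorphism: $Z$ is an \emph{arbitrary} homologically trivial class in $\CH^{2n}(Y\times Y)$, and Proposition~\ref{prop:birationalmotive} only says that $\frh^\circ(Y)$ and $\frh^\circ(X^{[n]})$ are isomorphic \emph{as} co-algebra objects, not that every self-correspondence of either side commutes with the diagonal. (This compatibility would be automatic if $Z$ were the graph of a birational self-map --- which is the case in the application to Theorem~\ref{thm:bloch} --- but the lemma is stated and used for arbitrary $Z$.) Without that compatibility the factorisation $\bar Z_*|_{\mathrm{Gr}_s}=(\bar Z_*|_{\mathrm{Gr}_1})^{\otimes s}$ is unjustified. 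The argument in \cite{LYZ23} that the paper is citing does rest on the identification $\frh^\circ(X^{[n]})_{(s)}\cong \Sym^s\frt^\circ(X)$, but the passage from ``acts as $\pm\id$ on the degree-one piece'' to ``acts as $(\pm1)^s\id$ on $\Sym^s$'' requires an honest analysis of which endomorphisms of $\Sym^s\frt^\circ(X)$ can occur, not merely the existence of the symmetric-power description; your sketch elides this and replaces it with an unearned functoriality claim.
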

\begin{proof}
    By Proposition \ref{prop:birationalmotive}, we only need to treat the case $M_\sigma(\srX,\bfv)\cong X^{[n]}$. Then the result follows from \cite[Proposition 3.8(iii)]{LYZ23}.
\end{proof}

\subsection{Proof of Theorem \ref{thm:bloch}} 
As discussed in the proof Corollary \ref{cor：bir-invariant}, note that $Y$ admits a birational Lagrangian fibration, this implies:
\[
Y \cong M_\sigma(\srX,\bfv)
\]
for some twisted K3 surface $\srX \to X$, which endows $\CH_0(Y)$ with the Shen-Yin-Zhao filtration $\bS^{\mathrm{SYZ}}_\bullet \CH_0(Y) = \bS^{\mathrm{BV}}_\bullet \CH_0(Y)$. We prove both implications for $\phi \in \mathrm{Bir}(Y)$ preserving the birational Lagrangian fibration:
\begin{enumerate}
    \item[($\Rightarrow$)] When $\phi$ is (anti-)symplectic ($\phi^*|_{\rH^{2,0}(Y)} = \pm \id$), \cite[Theorem 2.17]{LYZ23} implies $\phi_*$ acts as $\pm \id$ on $\mathrm{Gr}_1\CH_0(Y)$. Lemma \ref{lem:generator} then yields the required action on higher graded pieces:
    \[
    \phi_*|_{\mathrm{Gr}_s\CH_0(Y)} = (\pm 1)^s \id \quad \forall 1 \leq s \leq n
    \]
    
    \item[($\Leftarrow$)] Suppose $\phi_*$ acts as $(\pm 1)^s \id$ on $\mathrm{Gr}_s\CH_0(Y)$ for all $1 \leq s \leq n$. For $s=1$, \cite[Lemma 2.1]{vial22} implies this action lifts to $\pm \id$ on $\frh^\circ(Y)_{(1)}$. Since $\frh^\circ(Y)_{(1)}$ has transcendental cohomology $\mathrm{T}(Y) \subset \rH^2(Y,\mathbb{Z})$ \cite[Proposition 10.24]{voisin2003hodge}, we obtain:
    \[
    \phi^*|_{\mathrm{T}(Y)} = \pm \id \implies \phi^*|_{\rH^{2,0}(Y)} = \pm \id. \qedhere
    \]
\end{enumerate}
This finishes the proof. \qed

\section{appendix}
In this appendix, we extend the result  in \cite{BM} to twisted K3 surfaces.  Let $\srX \to X$ be a twisted K3 surface, $\bfv^2 > 0 $ a Mukai vector, and $\sigma = (Z_{\sigma}, A_{\sigma}) \in \Stab(\srX)$ $\bfv$-generic. The first result is 

\begin{lemma}[cf.~{\cite[Lemma 7.3]{BM}}] \label{lem:BM}
Let $U$ be any connected open subset of $\Stab(\srX)$ containing $\sigma$ which does not encounter a wall. Then $U$ contains a dense subset of stability conditions $\tau = (Z_{\tau}, \srA_{\tau})$, for which there exists a primitive Mukai vector $\bfw$ with $\bfw^2=0$ such that $Z_{\tau}(w)$ and $Z_{\tau}(v)$ lie on the same ray in the complex plane.
\end{lemma}
\begin{proof}
The untwisted case is already treated in \cite[Lemma 7.3]{BM}. For twisted K3 surfaces, the argument is analogous and we provide details for the convenience of the reader. 

Assume $Z_{\sigma}(\mathbf{v}) = -1$ and restrict ourselves to those $\tau$ with $Z_{\tau}(\bfv) = -1$. Consider the quadric 
\[
\{ \mathbf{w}^2 = 0 \} \subset \widetilde{\mathrm{H}}(\mathscr{X}) \otimes \mathbb{R}.
\]
Since the Mukai pairing has signature $(4,20)$, there exists a real solution $\mathbf{w}_0$ satisfying $\mathrm{Im} Z_{\sigma}(\mathbf{w}_0) = 0$. As any nondegenerate rational quadratic form of dimension $\geq 5$ with indefinite signature represents $0$ over $\mathbb{Q}$, we can find $\mathbf{w}_1 \in \widetilde{\mathrm{H}}(\mathscr{X}) \otimes \mathbb{Q}$ arbitrarily close to $\mathbf{w}_0$, though $\mathrm{Im} Z_{\sigma}(\mathbf{w}_1) = 0$ may not hold.

Note that for $\mathbf{w}_1$ sufficiently close to $\mathbf{w}_0$, $\mathbf{w}_1$ and $\mathbf{v}$ are linearly independent. We can therefore deform $\sigma$ locally to obtain a new stability condition $\tau = (Z_{\tau}, A_{\tau})$ satisfying $\mathrm{Im} Z_{\tau}(\mathbf{v}) = \mathrm{Im} Z_{\tau}(\mathbf{w}_1) = 0$ while maintaining $\mathrm{Re}Z_{\tau} = \mathrm{Re}Z_{\sigma}$. Specifically, define the linear functional 
\[
\mathrm{Im} Z_{\tau} = \mathrm{Im} Z_{\sigma} + \delta
\] 
where $\delta$ satisfies
\begin{align*}
    \delta(\mathbf{v}) = 0, \quad \delta(\mathbf{w}_1) = -\mathrm{Im} Z_{\sigma}(\mathbf{w}_1),
\end{align*}
and vanishes on $\mathrm{span}(\mathbf{v}, \mathbf{w}_1)^{\perp}$. The norm $\|\delta\|$ is bounded by $|\mathrm{Im} Z_{\sigma}(\mathbf{w}_1)|$; since $\mathbf{w}_1$ is close to $\mathbf{w}_0$, this value is small, ensuring the Bridgeland distance $\|\sigma - \tau\|$ is small. Finally, replace $\mathbf{w}_1$ by the unique primitive integral class $\mathbf{w} \in \mathbb{R}\cdot \mathbf{w}_1$ satisfying $\mathrm{Re}Z_{\tau}(\mathbf{w}) < 0$.

\end{proof}

As a consequence, we get

\begin{theorem}\label{thm:locally-free}
There exists another twisted K3 surface $\srX'\to X'$ and a Fourier-Mukai equivalence
$\Phi: \rD^{(1)}(\srX) \longrightarrow \rD^{(1)}(\srX')$
which induces an isomorphism
\begin{align}
    \Phi \circ [-1]: M_{ \sigma}(\srX,\bfv) \xrightarrow{\sim} M_{\omega'}(\srX',-\Phi(\bfv))
\end{align}
from the Bridgeland moduli space of twisted $\sigma$-semistable objects of Mukai vector $\bfv$ on $\srX$ to the moduli space of twisted $\omega'$-Gieseker semistable bundles of Mukai vector $-\Phi(\bfv)$ on $\srX'$. Here $\omega'$ is a generic ample class on $\srX'$. Moreover,
\begin{enumerate}
    \item[i)] $(\omega', \beta')$-slope (semi)stability for twisted sheaves of class $-\Phi(\bfv)$ coincides with $\omega'$-Gieseker (semi)stability for any $\beta' \in \NS(\srX') \otimes \QQ$;
    \item[ii)] if $\bfv$ is primitive, both moduli spaces parametrize only stable objects.
\end{enumerate}
\end{theorem}

\begin{proof}
We divide the proof into three steps: Step 1 gives the construction of $\srX' \to X'$ and the equivalence $\Phi$; Step 2 clarifies that the image of objects with Mukai vector $\bfv$ under $\Phi$ are shifting of slope semistable twisted bundles; Step 3 relates the twisted slope stability with twisted Gieseker stability.
\subsubsection*{\underline{Step 1}}
Let $\tau$ be a generic element as in Lemma \ref{lem:BM}. 
According to \cite[Theorem 0.2]{HS}, there exists a coarse moduli space of stable $\alpha$-twisted sheaves with Mukai vector $\bfw$, such that the twisted universal sheaf $\srE$ induces an equivalence
\begin{align*}
\Phi: \rD^{(1)}(\srX) \longrightarrow \rD^{(1)}(\srX')
\end{align*}
where $X':= M_{\tau}(\srX,\bfw)$ and $\alpha' \in \Br(X')$ is the Brauer class of $\srX'\to X'$, making $\srE$ into a $\alpha^{-1} \boxtimes \alpha'$-twisted sheaf.

Now consider $\tau' := \Phi(\tau) \in \mathrm{Stab}(\srX')$. We may assume that $\tau' \in \Stab^{\dagger}(\srX')$ due to Bridgeland's covering map property. 
We now claim that up to acting by $\widetilde{\mathrm{GL}}_2^+(\RR)$, we may further assume that $\tau' = \sigma_{\omega', \beta'}$ for some $\omega', \beta' \in \NS(\srX')\otimes \QQ$, with $\omega'$ ample. As the construction of $\Phi$ indicates, $\Phi(\cF)$ is isomorphic to a skyscraper sheaf on $\srX' \to X'$ for every $\cF \in \srM_{\tau'}(\srX',\bfw)(\CC)$. Since $\Phi$ is an equivalence, every skyscraper sheaf is $\tau'$-stable with the same phase. The claim then follows from \cite[Remark 3.9]{HMS}.

\subsubsection*{\underline{Step 2} (Locally freeness)} 
We now show that for any $\cE \in \mathscr{M}_{\tau}(\mathscr{X},\mathbf{v})(\mathbb{C})$, $\Phi(\cE)[-1]$ is a twisted locally free sheaf on $\mathscr{X}' \to X'$ that is $\mu_{\omega', \beta'}$-semistable with slope $\mu_{\omega', \beta'}(\Phi(\cE)[-1]) = 0$.

Our choice of $\mathbf{w}$ ensures that $\mathbf{w}$ does not lie on a wall for $\mathbf{v}$; hence, no destabilizing factor of $\cE$ has Mukai vector $\mathbf{w}$. Consequently, after applying $\Phi$, no skyscraper sheaf is a stable factor of $\Phi(\cE)$. By Lemma \ref{lem:BM}, $Z_{\tau}(\mathbf{v})$ and $Z_{\tau}(\mathbf{w})$ lie on the same ray in the complex plane. Since
\begin{align*}
   Z_{\tau}(\mathbf{w}) &= Z_{\omega', \beta'}((0,0,1)_{X'}) \\
   &= \left< \exp(\beta'+i\omega'), (0,0,1)_{X'} \right> \in \mathbb{R}_{<0},
\end{align*}
$\Phi(\cE)$ has phase 1 and therefore lies in the heart $A_{\omega', \beta'}$. This gives a short exact sequence:
\begin{align*}
   0 \to \cH^{-1}(\Phi(\cE))[1] \to \Phi(\cE) \to \cH^{0}(\Phi(\cE)) \to 0,
\end{align*}
where $\cH^{\bullet}$ denotes the cohomology sheaves. Since $\mathrm{Im} Z_{\omega', \beta'}$ is additive and $\mathrm{Im} Z_{\omega', \beta'}(\cG) \geq 0$ for all $\cG \in \mathscr{A}(\omega', \beta')$, we have
\begin{align*}
    \mathrm{Im} Z_{\omega', \beta'}\left(\cH^{-1}(\Phi(\cE))\right) = \mathrm{Im} Z_{\omega', \beta'}\left(\cH^{0}(\Phi(\cE))\right) = 0.
\end{align*}

If $\cH^{0}(\Phi(\cE))$ has positive-dimensional support, then $\mathrm{Im} Z_{\omega', \beta'}\left(\cH^{0}(\Phi(\cE))\right) > 0$ by the Chern class definition of $Z_{\omega', \beta'}$. Hence, $\cH^{0}(\Phi(\cE))$ must be supported in dimension zero. Any stable factor of $\cH^{0}(\Phi(\cE))$ is then a stable factor of $\Phi(\cE)$, but no skyscraper sheaf can be a stable factor. Therefore, $\cH^{0}(\Phi(\cE)) = 0$, and $\Phi(\cE)[-1] \cong \cH^{-1}(\Phi(\cE))$. Similarly, $\cH^{-1}(\Phi(\cE))$ admits no nonzero maps to skyscraper sheaves, implying $\Phi(\cE)$ is a locally free twisted sheaf.

For slope semistability, note that if $\Phi(\cE)[-1]$ were not $\mu_{\omega', \beta'}$-semistable, its Harder-Narasimhan filtration would produce a subobject in $F_{\omega', \beta'}$ with positive slope. However, the slope $\mu_{\omega', \beta'}(\Phi(\cE)[-1]) = 0$ because $\Phi(\cE)$ has phase 1.

\subsubsection*{\underline{Step 3} (Giesker Stability)}
We now address part (i). Let $\cE$ be a twisted $\mu_{\omega', \beta'}$-semistable sheaf with Mukai vector $-\Phi(\mathbf{v})$, which has phase $0$ by construction. Suppose $\cF \subset \cE$ is a saturated subsheaf of the same slope. Then $\cF[1]$ is a subobject of $\cE[1]$ in $P_{\omega', \beta'}(1)$, where $P_{\omega', \beta'}$ denotes the slicing for $\tau' = \sigma_{\omega', \beta'}$. 

 As  $\tau'$ is  $\Phi(\mathbf{v})$-generic, $\mathbf{v}(\cF)$ must be proportional to $\mathbf{v}(\cE)$; otherwise $\cF[1]$ would define a new wall through $\tau'$. Since we work on a surface, the twisted Hilbert polynomial of $\cF$ is therefore proportional to that of $\cE$. This conclusion holds independently of $\beta'$, so we may disregard $\beta'$ and consider only $\omega'$-Gieseker stability.

For (ii), similarly, assume $\cE$ is $\omega'$-Gieseker stable with Mukai vector $-\Phi(\mathbf{v})$ and has a saturated subsheaf $\cF$ satisfying $\mu_{\omega', \beta'}(\cF) = \mu_{\omega', \beta'}(\cE)$. Therefore, $\cF$ and $\cE$ both have phase 1, making $\cF[1]$ a subobject of $\cE[1]$ in $P(\omega', \beta')(1)$. The same argument shows that $\cF$ and $\cE$ have identical twisted Hilbert polynomials, yielding a contradiction. The assertion follows since $\Phi \circ [-1]$ preserves S-equivalence classes.
\end{proof}

\bibliographystyle{plain}
\bibliography{main}

\begin{thebibliography}{10}

\bibitem{add16}
Nicolas Addington.
\newblock On two rationality conjectures for cubic fourfolds.
\newblock {\em Math. Res. Lett.}, 23(1):1--13, 2016.

\bibitem{BM14}
Arend Bayer and Emanuele Macr\`\i.
\newblock M{MP} for moduli of sheaves on {K}3s via wall-crossing: nef and movable cones, {L}agrangian fibrations.
\newblock {\em Invent. Math.}, 198(3):505--590, 2014.

\bibitem{BM}
Arend Bayer and Emanuele Macr\`i.
\newblock Projectivity and birational geometry of {B}ridgeland moduli spaces.
\newblock {\em J. Amer. Math. Soc.}, 27(3):707--752, 2014.

\bibitem{BV04}
Arnaud Beauville and Claire Voisin.
\newblock On the {C}how ring of a {$K3$} surface.
\newblock {\em J. Algebraic Geom.}, 13(3):417--426, 2004.

\bibitem{Bragg18}
Daniel Bragg and Max Lieblich.
\newblock Twistor spaces for supersingular {$K3$} surfaces.
\newblock {\em arXiv: Algebraic Geometry}, 2018.

\bibitem{Bridgeland08}
Tom Bridgeland.
\newblock Stability conditions on {$K3$} surfaces.
\newblock {\em Duke Math. J.}, 141(2):241--291, 2008.

\bibitem{B91}
Constantin B\u{a}nic\u{a}.
\newblock Smooth reflexive sheaves.
\newblock In {\em Proceedings of the {C}olloquium on {C}omplex {A}nalysis and the {S}ixth {R}omanian-{F}innish {S}eminar}, volume~36, pages 571--593, 1991.

\bibitem{Huy14}
D.~Huybrechts.
\newblock Curves and cycles on {K}3 surfaces.
\newblock {\em Algebr. Geom.}, 1(1):69--106, 2014.
\newblock With an appendix by C. Voisin.

\bibitem{Huy10}
Daniel Huybrechts.
\newblock Chow groups of {K}3 surfaces and spherical objects.
\newblock {\em J. Eur. Math. Soc. (JEMS)}, 12(6):1533--1551, 2010.

\bibitem{Huy17}
Daniel Huybrechts.
\newblock The {K}3 category of a cubic fourfold.
\newblock {\em Compos. Math.}, 153(3):586--620, 2017.

\bibitem{Huy}
Daniel Huybrechts and Manfred Lehn.
\newblock {\em The geometry of moduli spaces of sheaves}.
\newblock Aspects of Mathematics, E31. Friedr. Vieweg \& Sohn, Braunschweig, 1997.

\bibitem{HMS}
Daniel Huybrechts, Emanuele Macr\`i, and Paolo Stellari.
\newblock Stability conditions for generic {$K3$} categories.
\newblock {\em Compos. Math.}, 144(1):134--162, 2008.

\bibitem{HS}
Daniel Huybrechts and Paolo Stellari.
\newblock Proof of {C}\u ald\u araru's conjecture. {A}ppendix: ``{M}oduli spaces of twisted sheaves on a projective variety'' [in {\it {m}oduli spaces and arithmetic geometry}, 1--30, {M}ath. {S}oc. {J}apan, {T}okyo, 2006; mr2306170] by {K}. {Y}oshioka.
\newblock In {\em Moduli spaces and arithmetic geometry}, volume~45 of {\em Adv. Stud. Pure Math.}, pages 31--42. Math. Soc. Japan, Tokyo, 2006.

\bibitem{JL22}
Qingyuan Jiang and Naichung~Conan Leung.
\newblock Derived categories of projectivizations and flops.
\newblock {\em Adv. Math.}, 396:Paper No. 108169, 44, 2022.

\bibitem{KS16}
Bruno Kahn and Ramdorai Sujatha.
\newblock Birational motives {I}: {P}ure birational motives.
\newblock {\em Ann. K-Theory}, 1(4):379--440, 2016.

\bibitem{LYZ23}
Zhiyuan Li, Xun Yu, and Ruxuan Zhang.
\newblock Bloch's conjecture for (anti-)autoequivalences on {$K3$} surfaces, 2023.

\bibitem{LiZ22}
Zhiyuan Li and Ruxuan Zhang.
\newblock {Beauville–Voisin Filtrations on Zero-Cycles of Moduli Space of Stable Sheaves on K3 Surfaces}.
\newblock {\em International Mathematics Research Notices}, 06 2022.
\newblock rnac161.

\bibitem{Lie07}
Max Lieblich.
\newblock Moduli of twisted sheaves.
\newblock {\em Duke Math. J.}, 138(1):23--118, 2007.

\bibitem{LMS14}
Max Lieblich, Davesh Maulik, and Andrew Snowden.
\newblock Finiteness of {K}3 surfaces and the {T}ate conjecture.
\newblock {\em Ann. Sci. \'Ec. Norm. Sup\'er. (4)}, 47(2):285--308, 2014.

\bibitem{Lin18}
Hsueh-Yung Lin.
\newblock {Lagrangian Constant Cycle Subvarieties in Lagrangian Fibrations}.
\newblock {\em International Mathematics Research Notices}, 2020(1):14--24, 02 2018.

\bibitem{MZ20}
Alina Marian and Xiaolei Zhao.
\newblock On the group of zero-cycles of holomorphic symplectic varieties.
\newblock {\em \'{E}pijournal G\'{e}om. Alg\'{e}brique}, 4:Art. 3, 5, 2020.

\bibitem{OG13}
Kieran~G. O'Grady.
\newblock Moduli of sheaves and the {C}how group of {$K3$} surfaces.
\newblock {\em J. Math. Pures Appl. (9)}, 100(5):701--718, 2013.

\bibitem{SYZ20}
Junliang Shen, Qizheng Yin, and Xiaolei Zhao.
\newblock Derived categories of {$K3$} surfaces, {O}'{G}rady's filtration, and zero-cycles on holomorphic symplectic varieties.
\newblock {\em Compos. Math.}, 156(1):179--197, 2020.

\bibitem{vial22}
Charles Vial.
\newblock On the birational motive of hyper-{K}\"{a}hler varieties.
\newblock {\em J. Math. Pures Appl. (9)}, 163:577--624, 2022.

\bibitem{Vistoli89}
Angelo Vistoli.
\newblock Intersection theory on algebraic stacks and on their moduli spaces.
\newblock {\em Inventiones mathematicae}, 97:613--670, 1989.

\bibitem{voisin2003hodge}
Claire Voisin.
\newblock {\em Hodge Theory and Complex Algebraic Geometry II: Volume 2}, volume~77.
\newblock Cambridge University Press, 2003.

\bibitem{Voi08}
Claire Voisin.
\newblock On the {C}how ring of certain algebraic hyper-{K}\"{a}hler manifolds.
\newblock {\em Pure Appl. Math. Q.}, 4(3, Special Issue: In honor of Fedor Bogomolov. Part 2):613--649, 2008.

\bibitem{Voi15}
Claire Voisin.
\newblock Rational equivalence of 0-cycles on {$K3$} surfaces and conjectures of {H}uybrechts and {O}'{G}rady.
\newblock In {\em Recent advances in algebraic geometry}, volume 417 of {\em London Math. Soc. Lecture Note Ser.}, pages 422--436. Cambridge Univ. Press, Cambridge, 2015.

\bibitem{Yos01}
K\=ota Yoshioka.
\newblock Moduli spaces of stable sheaves on abelian surfaces.
\newblock {\em Math. Ann.}, 321(4):817--884, 2001.

\bibitem{yoshioka2006moduli}
K\=ota Yoshioka.
\newblock Moduli spaces of twisted sheaves on a projective variety.
\newblock In {\em Moduli spaces and arithmetic geometry}, volume~45 of {\em Adv. Stud. Pure Math.}, pages 1--30. Math. Soc. Japan, Tokyo, 2006.

\end{thebibliography}
\end{document}